\newtheorem{theorem}{Theorem}
\newtheorem{lemma}[theorem]{Lemma}
\newtheorem{proposition}[theorem]{Proposition}
\newtheorem{corollary}[theorem]{Corollary}
\title{Multiplication groups and inner mapping groups\\of Cayley--Dickson loops}
\author{Jenya Kirshtein \thanks{ykirshte@du.edu, Department of Mathematics, University of Denver, 2360 South Gaylord street, Denver, CO 80208, USA}}
\date{\vspace{-5ex}}
\begin{document}

\maketitle

\begin{abstract}
The Cayley--Dickson loop $Q_n$ is the multiplicative closure of basic elements of the algebra constructed by $n$ applications of the Cayley--Dickson doubling process (the first few examples of such algebras are real numbers, complex numbers, quaternions, octonions, sedenions). We establish that the inner mapping group $Inn(Q_n)$ is an elementary abelian $2$-group of order $2^{2^n-2}$ and describe the multiplication group $Mlt(Q_n)$ as a semidirect product of $Inn(Q_n)\times \mathbb{Z}_2$ and an elementary abelian $2$-group of order $2^n$. We prove that one-sided inner mapping groups $Inn_l(Q_n)$ and $Inn_r(Q_n)$ are equal, elementary abelian $2$-groups of order $2^{2^{n-1}-1}$. We establish that one-sided multiplication groups $Mlt_l(Q_n)$ and $Mlt_r(Q_n)$ are isomorphic, and show that $Mlt_l(Q_n)$ is a semidirect product of $Inn_l(Q_n)\times\mathbb{Z}_2$ and an elementary abelian $2$-group of order~$2^n$. 

\end{abstract}
\let\thefootnote\relax\footnotetext{2010 Mathematics Subject Classification: 20N05, 17D99} 
\let\thefootnote\relax\footnotetext{Keywords: Cayley--Dickson doubling process, Cayley--Dickson loop, multiplication group, inner mapping group, octonion, sedenion}

\section{Introduction}\label{sec:intro}
\subsection{Cayley--Dickson loops}\label{subsec:CDL}
The \emph{Cayley--Dickson doubling process} produces a sequence of power-associative algebras over a field~$F$ (see~\cite{SprVel:00}).
Let $\mathbb{A}_{0}=F$ with conjugation $a^{*}=a$ for all $a\in F$. Let $\mathbb{A}_{n+1}=\{(a,b)\left|\right.a,b\in A_{n}\}$ for $n\in \mathbb{N}$, where multiplication, addition, and conjugation are defined as follows:
\begin{eqnarray*}
\label{eqn:mult} (a,b)(c,d) & = & (ac-d^{*}b,da+bc^{*}), \\ 
\label{eqn:add} (a,b)+(c,d) & = & (a+c,b+d), \\ 
\label{eqn:conj} (a,b)^{\ast} & = & (a^{\ast},-b). 
\end{eqnarray*}
The dimension of $\mathbb{A}_{n}$ over $F$ is~$2^{n}$. 

We consider multiplicative structures that arise from the Cayley--Dickson process. A \emph{loop} is a nonempty set $Q$ with a binary operation such that there is a neutral element $1\in Q$ satisfying $1\cdot x=x\cdot 1=x$ for all $x\in Q$, and for every $a,b\in Q$ there is a unique $x$ and a unique $y$ satisfying $x\cdot a=b$, $a\cdot y=b$. Define \emph{Cayley--Dickson loops} $\left(Q_{n},\cdot \right)$ over $F$ inductively as follows: 
\begin{equation*}\label{eqn:CD2}
 Q_{0}=\{1,-1\},\ \  Q_{n}=\{(x,0),(x,1)\left|\right.x\in Q_{n-1}\},
\end{equation*}
with multiplication
\begin{eqnarray*} 
\label{eqn:mult0} (x,0)(y,0) & = & (xy,0),\\
\label{eqn:mult1} (x,0)(y,1) & = & (yx,1), \\
\label{eqn:mult2} (x,1)(y,0) & = & (xy^{*},1),\\
\label{eqn:mult3} (x,1)(y,1) & = & (-y^{*}x,0), 
\end{eqnarray*}
and conjugation
\begin{eqnarray*} \label{eqn:conj1}
(x,0)^{*} & = & (x^{*},0),\\ 
(x,1)^{*} & = & (-x,1). 
\end{eqnarray*}

Note that the Cayley--Dickson loops are independent of the underlying field $F$ of characteristic not two. The reader can assume $F=\mathbb{R}$ without loss of generality from now on. The first few Cayley--Dickson loops are the real group $\mathbb{R}_{2}$ (abelian); the complex group $\mathbb{C}_{4}$ (abelian); the quaternion group $\mathbb{H}_{8}$ (not abelian); the octonion loop $\mathbb{O}_{16}$ (Moufang); the sedenion loop $\mathbb{S}_{32}$ (not Moufang); the loop $\mathbb{T}_{64}$. Note that the Cayley--Dickson loops are not associative after dimension $8$. The study of basic elements provides information about the underlying algebra, and is of interest, for example, in Lie theory, graph theory, quantum physics, functional analysis (see~\cite{Baez:02},~\cite{Ludkovsky:10}).

The order of $Q_{n}$ is $2^{n+1}$. The loop $Q_{n}$ embeds into $Q_{n+1}$ by $x\mapsto (x,0)$, so that
\[
 Q_{n}\cong\left\{(x,0)\left|\right.(x,0)\in Q_{n+1}\right\}.
\] 
All elements of $Q_{n}$ have norm one. 
Denote the opposite of an element $(x_1,x_2,x_3,\ldots,x_{n+1})$ by
\[
-(x_1,x_2,x_3,\ldots,x_{n+1})=(-x_1,x_2,x_3,\ldots,x_{n+1}).
\]
The elements $1_{Q_n},-1_{Q_n}\in Q_n$ are
\begin{eqnarray*}
1_{Q_n}&=&(1,\underbrace{0,\ldots,0}_{n}),\\
-1_{Q_n}&=&(-1,\underbrace{0,\ldots,0}_{n}).
\end{eqnarray*}
We call $1_{Q_n}$ by $1$, and $-1_{Q_n}$ by $-1$. One can see that $1$ and $-1$ commute and associate with every element of $Q_n$. 
We denote the loop generated by elements $x_{1},\ldots,x_{n}$ of a loop $L$ by $\left\langle x_{1},\ldots,x_{n}\right\rangle$.
Let~$i_{n}$ be the element $(1_{Q_{n-1}},1)=(1,\underbrace{0,\ldots,0}_{n-1},1)$ of $Q_{n}$. Such element $i_n$ satisfies $Q_{n}=Q_{n-1}\cup (Q_{n-1}i_{n})=\left\langle Q_{n-1},i_n\right\rangle$. Thus $Q_{n}=\left\langle i_{1},i_{2},\ldots, i_{n}\right\rangle$. We call $i_{1},i_{2},\ldots, i_{n}$ the \emph{canonical generators} of~$Q_{n}$. Any~$x\in Q_{n}$ can be written as 
\begin{equation*}\label{eqn:x}
x=\pm\prod_{j=1}^n i_{j}^{\epsilon_{j}},\ \ \epsilon_{j}\in \{0,1\}.
\end{equation*}
For example,   
\begin{eqnarray*}
 Q_0=\mathbb{R}_{2} & = & \{1,-1\},\\
 Q_1=\mathbb{C}_{4} & = & \pm \{(1,0),(1,1)\}=\left\langle  i_1\right\rangle = \{1,-1,i_1,-i_1\},\\
 Q_2=\mathbb{H}_{8} & = & \pm \{(1,0,0),(1,1,0),(1,0,1),(1,1,1)\} = \left\langle i_1,i_2\right\rangle = \pm \{1,i_{1},i_{2},i_{1}i_{2}\},\\
 Q_3=\mathbb{O}_{16}& = & \left\langle i_1,i_2,i_3\right\rangle = \pm\{1,i_{1},i_{2},i_1i_2,i_3,i_1i_3,i_2i_3,i_1i_2i_3\},\\
 \end{eqnarray*}
We use 
\[
e=i_n
\]
for the unit added in the last step of the process. Let us recall some basic properties of the Cayley--Dickson loops, for more details see~\cite{Kirshtein:12}.

\begin{proposition}\label{prop:prop}
Let $Q_n$ be a Cayley--Dickson loop, and $x,y\in Q_n$. The following hold:
\begin{enumerate}
	\item the conjugate $x^{*}=-x$ for $x\neq\pm1$, $1^{*}=1$, $(-1)^{*}=-1$;
	\item the inverse $x^{-1}=x^{*}$;
	\item the order $\left|x\right|=4$ for $x\neq \pm1$, $\left|1\right|=1$, $\left|-1\right|=2$;
	\item the loop $Q_n$ is Hamiltonian (every subloop $S$ is normal in $Q_n$, i.e., $xS=Sx$, $(xS)y=x(Sy)$, $x(yS)=(xy)S$).
\end{enumerate}
\end{proposition}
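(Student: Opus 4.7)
The plan is to attack statements (1)--(3) by a uniform induction on $n$ using the four multiplication rules and the conjugation formulas, and then leverage (3) to reduce (4) to a statement about commutators and associators.

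For (1), I would induct on $n$, the base $n=0$ being trivial since $Q_0=\{1,-1\}$. In the inductive step, an arbitrary $x\in Q_n$ has one of the forms $(y,0)$ or $(y,1)$ with $y\in Q_{n-1}$. Observe that elements of the form $(y,1)$ are never $\pm 1$ (since $\pm 1=(\pm 1,0)$), and that $(y,0)=\pm 1$ iff $y=\pm 1$. Then the conjugation rules give $(y,1)^{*}=(-y,1)=-(y,1)$ directly, and $(y,0)^{*}=(y^{*},0)=(-y,0)=-(y,0)$ whenever $y\neq\pm 1$ by the inductive hypothesis, while $(\pm 1,0)^{*}=(\pm 1,0)$. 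For (2), I would verify $xx^{*}=1$ by the same bookkeeping: $(y,0)(y^{*},0)=(yy^{*},0)=(1,0)$ by induction, and $(y,1)(-y,1)=(-(-y)^{*}y,0)=(y^{*}y,0)=(1,0)$ by induction. For (3), the elements $\pm 1$ are handled by inspection; for $x\neq\pm 1$ I would show $x^{2}=-1$, again by induction and a two-case split: $(y,0)^{2}=(y^{2},0)=(-1,0)$ for $y\neq\pm 1$, and $(y,1)^{2}=(-y^{*}y,0)=(-1,0)$ using (2). Combined with $x\cdot x^{*}=1$ from (2), this yields $|x|=4$.

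For (4), the key reduction is: by (3), any subloop $S$ containing an element $x\neq\pm 1$ satisfies $-1=x^{2}\in S$. Subloops entirely contained in $\{1,-1\}$ are automatically normal because $\pm 1$ are central (stated just before the proposition). So it suffices to prove normality assuming $-1\in S$. For this, the central lemma is that every commutator $[x,y]=(xy)(yx)^{-1}$ and every associator $[x,y,z]=((xy)z)(x(yz))^{-1}$ in $Q_n$ lies in $\{\pm 1\}$. This can be proved by induction on $n$ by applying the four multiplication formulas to the possible combinations of second coordinates; each reassociation or commutation differs from the ``naive'' result by a sign coming from the conjugates in the formulas, and these signs collapse to $\pm 1$ thanks to the inductive form of conjugation in (1). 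Once the commutator/associator lemma is in place, for any $y\in Q_n$ and $s\in S$ one has $ys=(sy)\cdot\epsilon$ with $\epsilon\in\{\pm 1\}\subseteq S$, so $yS\subseteq Sy$ and similarly $Sy\subseteq yS$; the coset associativity identities $(xS)y=x(Sy)$ and $x(yS)=(xy)S$ follow identically from the associator lemma.

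The main obstacle is the commutator/associator lemma underlying (4); unlike (1)--(3), it is not resolved in a couple of lines because one must track signs through all mixed second-coordinate cases and through the three slots of the associator. A clean way to organize this is to first establish that $-1$ is central and that conjugation is an anti-automorphism (the latter itself by induction using the multiplication rules), and then to read commutators and associators as the signs measuring failure of $(xy)^{*}=y^{*}x^{*}$-like identities. Alternatively, since the proposition is labeled as a recollection, one could simply cite the corresponding statements in \cite{Kirshtein:12} and use only (1)--(3) in the present paper.
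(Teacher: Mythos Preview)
Your proposal is correct, but the paper does not actually prove this proposition: it is explicitly introduced as a recollection (``Let us recall some basic properties of the Cayley--Dickson loops, for more details see~\cite{Kirshtein:12}''), and no argument is given. You anticipated exactly this in your final sentence. The inductive proofs you sketch for (1)--(3) are the natural ones and go through cleanly; the only place to be slightly more careful is in (2), where you should note that $(-y)^{*}=-y^{*}$ holds in all cases (it follows from (1)), so that $-(-y)^{*}y=y^{*}y=1$, and that the symmetric identity $x^{*}x=1$ also needs to be checked to conclude $x^{-1}=x^{*}$.

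For (4), your reduction is right and matches how the paper later uses these facts: the key lemma is precisely that all commutators and associators lie in $\{1,-1\}$, which the paper states separately as Lemma~\ref{lemma:comm-assoc} (again without proof, citing~\cite{Kirshtein:12}). Your argument that $yS=Sy$ and $(xS)y=x(Sy)$, $x(yS)=(xy)S$ follow from this lemma is correct, but it silently uses that $\pm 1$ are central (so that, e.g., $(sy)\epsilon=(s\epsilon)y$ and $(x(sy))\epsilon=x((s\epsilon)y)$); this is stated in the paper just before the proposition, so you may invoke it freely. The case-by-case verification of the commutator/associator lemma via the four multiplication rules that you outline is indeed the standard route and is carried out in~\cite{Kirshtein:12}; Lemma~\ref{rmk:assoc} in the present paper records the associator formulas that result.
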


A loop~$Q$ is \emph{diassociative} if every pair of elements of $Q$ generates a group in~$Q$. A loop $Q$ is the \emph{inverse property loop} if there exist bijections $\lambda:x\mapsto x^\lambda$ and $\rho:x\mapsto x^\rho$ on $Q$ such that $x^\lambda(xy)=y$ and $(yx)x^\rho=y$ for every $y\in Q$. If $Q$ is an inverse property loop, then it satisfies the \emph{anti-automorphic inverse property} $(xy)^{-1}=y^{-1}x^{-1}$ for every $x,y\in Q$. One can see that diassociative loops are also inverse property loops. 

\begin{theorem}\cite{Culbert:07}
Cayley--Dickson loop  $Q_n$ is diassociative. Any pair of elements of $Q_n$ generate a subgroup of the quaternion group. In particular, a pair $x,y$ generates a real group when $x=\pm1$ and $y=\pm1$; a complex group when either $x=\pm1$, or $y=\pm1$ (but not both), or $x=\pm y\neq\pm 1$; a quaternion group otherwise. 
\end{theorem}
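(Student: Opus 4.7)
The plan is to prove diassociativity of $Q_n$ by induction on $n$, simultaneously with the classification of $\langle x,y\rangle$ as a subgroup of $\mathbb{H}_8$. For $n\le 2$ the loop $Q_n$ is itself a group (real, complex, or quaternion), and the three subcases of the statement can be read off by direct inspection of the multiplication table of $\mathbb{H}_8$.

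For the inductive step, set $e=i_n$ and recall that every element of $Q_n$ is of the form $c$ or $ce$ with $c\in Q_{n-1}$. Given $x,y\in Q_n$, I would split into cases according to whether each of $x,y$ lies in $Q_{n-1}$. If both do, then $(a,0)(b,0)=(ab,0)$ keeps all products in $Q_{n-1}$, so the conclusion follows from the inductive hypothesis. If only one, say $y=be$, lies outside, I write $x=a\in Q_{n-1}$ and compute with the doubling rules: $x^2=a^2=-1$ when $a\ne\pm 1$, $y^2=-b^{\ast}b=-1$, $xy=(ba)e$, and $yx=(ba^{\ast})e=-(ba)e=-xy$ via $a^{\ast}=-a$ (Proposition~\ref{prop:prop}). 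If both $x=ae$ and $y=be$ lie outside $Q_{n-1}$, then $xy=-b^{\ast}a$ and $yx=-a^{\ast}b$; the inductive hypothesis applied to $a,b\in Q_{n-1}$ ensures $ab=-ba$ in the generic situation where $\langle a,b\rangle\cong\mathbb{H}_8$, which yields $yx=-xy$ except exactly in the degenerate subcases $a=\pm 1$, $b=\pm 1$, or $a=\pm b$, each of which either forces $x=\pm y$ (the complex-group case) or is disposed of by a short direct computation using $(\pm e)c=\pm e\cdot c$.

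Once $x^2=y^2=-1$ and $yx=-xy$ are available in every nondegenerate situation, I would exhibit the eight elements $\pm 1,\pm x,\pm y,\pm xy$, verify they are distinct (using that $e$-components distinguish $Q_{n-1}$ from its complement, and that $x\ne\pm 1$, $x\ne\pm y$ by hypothesis), and show the subset is closed with a multiplication table matching $\mathbb{H}_8$. This reduces to checking a handful of triple products such as $x(xy)$, $(xy)x$, $y(xy)$, $(xy)y$, and $(xy)^2$; each one expands via the doubling formulas into an expression in $a,b$ and their conjugates, and collapses to the predicted element of $\{\pm 1,\pm x,\pm y,\pm xy\}$ via associativity of $\langle a,b\rangle\subseteq Q_{n-1}$ (granted by induction) together with $a^2=b^2=-1$ and $c^{\ast}=-c$ for $c\ne\pm 1$. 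I expect the main obstacle to be not conceptual but combinatorial: handling the boundary subcases $a=\pm 1$ or $b=\pm 1$, where $c^{\ast}=c$ rather than $-c$ changes signs, so that every triple product still reduces to the expected quaternion answer and the associativity of the subloop is fully verified.
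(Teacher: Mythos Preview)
The paper does not prove this theorem; it is quoted from \cite{Culbert:07} and stated without proof, so there is no in-paper argument to compare against.

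Your inductive scheme is sound and would succeed. A couple of small remarks. First, in Case~3 you appeal to the anticommutation $ab=-ba$ from the inductive hypothesis only in the ``generic'' situation $\langle a,b\rangle\cong\mathbb{H}_8$; the remaining subcases $a=\pm 1$, $b=\pm 1$, $a=\pm b$ do reduce either to $x=\pm y$ or to the mixed Case~2 configuration (with the roles of $a$ and $b$ swapped), so they are covered, but say so explicitly. Second, when you verify the $\mathbb{H}_8$ table you repeatedly use identities like $(ba)^*=a^*b^*$ and $b^*(ba)=(b^*b)a$; these are not automatic in a loop but follow because $\langle a,b\rangle$ is associative by the inductive hypothesis (diassociativity in $Q_{n-1}$) and the conjugate equals the inverse (Proposition~\ref{prop:prop}). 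Making that dependence explicit closes the argument. Finally, note that checking the full $8\times 8$ table is enough: once every product of two listed elements lands in the list with the quaternion value, the subset is a subloop whose Cayley table coincides with that of $\mathbb{H}_8$, so associativity and the group structure follow automatically---you need not verify general triple associators separately.
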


For a loop $Q$ and $x, y, z \in Q$ define the \emph{commutator} $[x,y]$ by $xy=(yx)[x,y]$ and the \emph{associator} $[x,y,z]$ by $xy\cdot z=(x\cdot yz)[x,y,z]$. The \emph{center} of a loop $Q$, denoted by $Z(Q)$, is the set of elements that commute and associate with every element of $Q$, more precisely, $Z(Q)=\{a\in Q \left| \right. ax=xa, a\cdot xy=ax\cdot y, xa\cdot y=x\cdot ay, xy\cdot a=x\cdot ya, \forall x,y\in Q\}$.

\begin{lemma}\label{lemma:comm-assoc}
Let $Q_n$ be a Cayley--Dickson loop, and $x,y,z\in Q_n$. The following hold:
\begin{enumerate}
	\item the commutator $[x,y]\in \left\{1,-1\right\}$, in particular, $[x,y]=-1$ when $\left\langle x,y\right\rangle\cong \mathbb{H}_8$, and $[x,y]=1$ when $\left\langle x,y\right\rangle < \mathbb{H}_8$;
	\item the associator $[x,y,z]\in \left\{1,-1\right\}$, in particular, $[x,y,z]=1$ when $\left\langle x,y,z\right\rangle\leq \mathbb{H}_{8}$, and $[x,y,z]=-1$ when $\left\langle x,y,z\right\rangle\cong \mathbb{O}_{16}$;
	\item the center $Z(Q_n)=\left\{1,-1\right\}$ when $n\geq 2$, and $Z(Q_n)=Q_n$ when $n<2$.
\end{enumerate}
\end{lemma}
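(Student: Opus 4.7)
The first part follows almost directly from Culbert's theorem stated above. Since $\langle x,y\rangle$ embeds as a subgroup of the quaternion group $\mathbb{H}_8$, it is either a proper (hence abelian) subgroup, in which case $xy=yx$ gives $[x,y]=1$, or it equals $\mathbb{H}_8$. In the latter case $x$ and $y$ cannot commute, as otherwise $\langle x,y\rangle$ would be abelian; and in $\mathbb{H}_8$ every non-commuting pair satisfies $yx=-xy$, so $[x,y]=-1$.

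For the second part, I would first argue that $[x,y,z]\in\{1,-1\}$ for all triples $x,y,z\in Q_n$. The cleanest way is to trace the computations of $(xy)z$ and $x(yz)$ through the Cayley--Dickson multiplication rules given in Section \ref{subsec:CDL}: by induction on $n$, each doubling step outputs a pair whose coordinates are prior products possibly decorated with signs and conjugations, and by Proposition \ref{prop:prop}(1) conjugation on $Q_n$ is itself sign-valued on the non-central elements. Consequently both parenthesizations yield the same element of $Q_n$ up to a sign in $\{1,-1\}$. Once this is in place, the easy direction is immediate: if $\langle x,y,z\rangle\le\mathbb{H}_8$ then the generated subloop is an associative group, so $[x,y,z]=1$. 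For the converse, note that $\mathbb{O}_{16}$ is non-associative, hence some triple in it must have a non-trivial associator; combined with diassociativity and the Hamiltonian property of Proposition \ref{prop:prop}(4), this pins the associator of every generating triple of $\langle x,y,z\rangle\cong\mathbb{O}_{16}$ to the only remaining value, $-1$.

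For the third part, the remark preceding Proposition \ref{prop:prop} already gives $\{1,-1\}\subseteq Z(Q_n)$. When $n<2$, $Q_n$ is the real or complex group, which is abelian and associative, so $Z(Q_n)=Q_n$. When $n\ge2$, take any $x\in Q_n\setminus\{1,-1\}$; since $|Q_n|=2^{n+1}\ge 8$, the set $Q_n\setminus\{1,-1,x,-x\}$ is nonempty, so pick $y$ from it. By Culbert's classification, $\langle x,y\rangle\cong\mathbb{H}_8$, and part (1) then yields $[x,y]=-1\ne 1$, so $x\notin Z(Q_n)$.

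The main obstacle is the technical claim that $[x,y,z]\in\{1,-1\}$ holds universally and equals $-1$ on every triple generating $\mathbb{O}_{16}$. The first half demands a careful induction through the doubling formulas, keeping track of how signs and conjugations propagate when rebracketing; the second half can be handled either by enumerating associator values directly inside $\mathbb{O}_{16}$ using the canonical generators $i_1,i_2,i_3$, or by invoking a structural argument that the associator in a diassociative Hamiltonian loop depends only on the isomorphism type of the subloop generated by the triple.
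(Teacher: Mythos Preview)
The paper does not actually prove this lemma; it appears in the introductory Section~\ref{subsec:CDL} among the ``basic properties'' recalled from \cite{Kirshtein:12}, so there is no in-paper proof to compare against. I will therefore just assess your argument on its own.

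Parts (1) and (3) are fine. Culbert's theorem plus the observation that every proper subgroup of $\mathbb{H}_8$ is abelian handles (1), and your choice of $y\in Q_n\setminus\{1,-1,x,-x\}$ together with (1) gives (3).

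Part (2) has a real gap. You correctly note that $[x,y,z]\in\{1,-1\}$ needs an inductive trace through the doubling formulas (and your sketch of that is adequate), and the case $\langle x,y,z\rangle\le\mathbb{H}_8$ is immediate. The problem is your justification for $[x,y,z]=-1$ when $\langle x,y,z\rangle\cong\mathbb{O}_{16}$. You argue: some triple in $\mathbb{O}_{16}$ has nontrivial associator, and then ``diassociativity and the Hamiltonian property'' force every generating triple to have associator $-1$. Neither of those properties does that work. Diassociativity only tells you that two-generated subloops are groups; the Hamiltonian property only says subloops are normal. Neither implies that the associator is constant on all generating triples, and your fallback claim that ``the associator\ldots depends only on the isomorphism type of the subloop generated by the triple'' is precisely what needs proof.

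The clean fix is to use that $\mathbb{O}_{16}$ is Moufang and invoke Moufang's theorem: in a Moufang loop, if $[x,y,z]=1$ then $x,y,z$ generate an associative subloop. Contrapositively, if $\langle x,y,z\rangle\cong\mathbb{O}_{16}$ (which is not associative), then $[x,y,z]\ne 1$, hence $[x,y,z]=-1$. Alternatively, since $|\langle x,y,z\rangle|=16$ forces $\langle x,y\rangle\cong\mathbb{H}_8$ and $z\notin\langle x,y\rangle$ by Lemma~\ref{lemma:order}, you can reduce to a single case and compute directly with the canonical generators; your closing paragraph gestures at this, but it should replace, not supplement, the appeal to the Hamiltonian property.
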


\begin{theorem}\label{cor:Zn}
If $Q_n$ is a Cayley--Dickson loop, then $Q_{n}/\{1,-1\}\cong (\mathbb{Z}_{2})^{n}$. 
\end{theorem}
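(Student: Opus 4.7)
The plan is to exhibit $Q_n/\{1,-1\}$ as an abelian group generated by $n$ elements of order dividing $2$, then match orders. The input is essentially all in hand: Proposition~\ref{prop:prop}(4) gives the normality needed to form the quotient, and Lemma~\ref{lemma:comm-assoc} shows the quotient kills all commutators and associators.

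First I would observe that since $Q_n$ is Hamiltonian, the two-element subloop $\{1,-1\}$ is normal in $Q_n$, so the quotient $Q_n/\{1,-1\}$ is a well-defined loop. Its order is $|Q_n|/2 = 2^{n+1}/2 = 2^n$.

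Next I would show that this quotient is an abelian group. For any $x,y,z \in Q_n$, Lemma~\ref{lemma:comm-assoc} gives $[x,y]\in\{1,-1\}$ and $[x,y,z]\in\{1,-1\}$. Passing to cosets, both become trivial, so $\overline{x}\,\overline{y}=\overline{y}\,\overline{x}$ and $(\overline{x}\,\overline{y})\overline{z}=\overline{x}(\overline{y}\,\overline{z})$ for all $x,y,z\in Q_n$. Hence $Q_n/\{1,-1\}$ is associative and commutative, i.e., an abelian group of order $2^n$.

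Finally I would identify the generators. Since $Q_n = \langle i_1,\ldots,i_n\rangle$, the quotient is generated by $\overline{i_1},\ldots,\overline{i_n}$. By Proposition~\ref{prop:prop}(3), $|i_j|=4$ and $i_j^2=-1$, so $\overline{i_j}^{\,2}=\overline{-1}=\overline{1}$; each $\overline{i_j}$ has order dividing $2$. Thus $Q_n/\{1,-1\}$ is a homomorphic image of $(\mathbb{Z}_2)^n$, and since both groups have order $2^n$ the map is an isomorphism. There is no real obstacle; the only point to be careful about is verifying that the two-sided normality of $\{1,-1\}$ inside a Hamiltonian loop is enough for the quotient loop to exist, which is standard and supplied directly by Proposition~\ref{prop:prop}(4).
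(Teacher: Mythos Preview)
Your argument is correct. The paper does not actually prove this theorem in the present text; it is stated in the background section as a result recalled from~\cite{Kirshtein:12}, so there is no in-paper proof to compare against. Your route---quotient by the normal subloop $\{1,-1\}$, observe via Lemma~\ref{lemma:comm-assoc} that all commutators and associators die, then count using the canonical generators---is the natural and standard one, and nothing is missing.
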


\begin{lemma}\label{lemma:order}
Let $S$ be a subloop of $Q_{n}$. The following hold:
\begin{enumerate}
	\item the center $Z(Q_n)\leq S$ for any $S\leq Q_n, S\neq \{1\}, n\geq 2$;
	\item \label{lemma:order1} If $x\in  Q_{n}\backslash S$, then $\left|\left\langle S,x\right\rangle\right|=2\left|S\right|$ when $S\neq\{1\}$, and $\left\langle S,x\right\rangle=\left\{1,-1,x,-x\right\}$ when $S=\{1\}$; 
	\item \label{lemma:order3} any $n$ elements of a Cayley--Dickson loop generate a subloop of order $2^k,$ $k\leq n+1$; 
	\item the order of $S$ is $2^{m}$ for some $m\leq n$.
\end{enumerate}
\end{lemma}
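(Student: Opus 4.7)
The plan is to prove the four parts in order, leaning on Proposition~\ref{prop:prop}, Lemma~\ref{lemma:comm-assoc}, Theorem~\ref{cor:Zn}, and diassociativity as the main tools.

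For Part~1, pick any $x\in S$ with $x\neq 1$. If $x=-1$ we are done; otherwise Proposition~\ref{prop:prop}(3) gives $|x|=4$, so by diassociativity $x^2$ has order~$2$. Since Proposition~\ref{prop:prop}(3) also tells us that $-1$ is the unique element of order~$2$ in $Q_n$, we conclude $x^2=-1\in S$. Combined with Lemma~\ref{lemma:comm-assoc}(3), which gives $Z(Q_n)=\{1,-1\}$ for $n\geq 2$, this yields $Z(Q_n)\leq S$.

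For Part~2 with $S=\{1\}$ and $x\neq\pm 1$, diassociativity makes $\langle x\rangle$ a cyclic group of order~$4$, so $\langle x\rangle=\{1,x,x^2,x^3\}=\{1,-1,x,-x\}$ since $x^2=-1$. When $S\neq\{1\}$, the Hamiltonian property (Proposition~\ref{prop:prop}(4)) makes $S$ normal in $\langle S,x\rangle$, so $S\cup Sx\subseteq\langle S,x\rangle$ with $|Sx|=|S|$ and $S\cap Sx=\emptyset$ because $x\notin S$. The crucial step is that by Part~1 we have $\{1,-1\}\leq S$, so $Q_n/S$ is a quotient of $Q_n/\{1,-1\}\cong(\mathbb{Z}_2)^n$ by Theorem~\ref{cor:Zn} and hence itself an elementary abelian $2$-group; in particular $(xS)^2=S$, i.e., $x^2\in S$, which forces $S\cup Sx$ to be closed under the loop operation. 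Therefore $\langle S,x\rangle=S\cup Sx$ has exactly $2|S|$ elements. The low-dimension cases $n=0,1$ are handled by direct inspection of $\mathbb{R}_2$ and $\mathbb{C}_4$.

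Part~3 follows by induction on $n$, the number of generators: zero elements generate the trivial subloop of order $2^0$, and when $n-1$ elements generate a subloop $T$ of order at most $2^n$, adjoining an $n$-th element either leaves $T$ fixed or, by Part~2, produces a subloop of order $2|T|$ (or $4|T|$ in the edge case $T=\{1\}$, still bounded by $2^2$), giving the bound $2^{n+1}$. For Part~4, I would build an ascending chain $\{1\}=T_0\subsetneq T_1\subsetneq\cdots\subsetneq T_m=S$ by iteratively picking an element of $S\setminus T_{i-1}$; Part~2 guarantees each step doubles the size (or quadruples it at the initial step), forcing $|S|=2^m$, and the bound $m\leq n+1$ comes from $|S|\leq |Q_n|=2^{n+1}$.

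The main obstacle is Part~2: verifying that $S\cup Sx$ is actually closed under the loop operation without access to associativity. The trick that overcomes this is that the Hamiltonian normality of $S$ combined with the elementary abelian structure of $Q_n/\{1,-1\}$ reduces the loop-theoretic closure statement to a computation in a $\mathbb{Z}_2$-vector space, bypassing any direct manipulation of associators in a nonassociative loop.
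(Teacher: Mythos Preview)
The paper does not actually prove this lemma: it appears in the introductory subsection on Cayley--Dickson loops among background results recalled from~\cite{Kirshtein:12}, with no proof given, so there is nothing here to compare against.

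Your argument is correct. The key move in Part~2---passing to the quotient $Q_n/S$ via the inclusion $\{1,-1\}\leq S$ from Part~1, invoking Theorem~\ref{cor:Zn} to see that $Q_n/S$ is an elementary abelian $2$-group, and then pulling back the two-element subgroup $\{S,\,xS\}$ to obtain $S\cup Sx$ as a subloop---cleanly avoids any direct associator manipulation, since the preimage of a subloop under a loop quotient map is again a subloop. Parts~1,~3,~4 then follow routinely. One small remark: in Part~4 you derive $m\leq n+1$ from $|S|\leq|Q_n|=2^{n+1}$, which is the correct bound; the printed statement's $m\leq n$ appears to be a slip, since $S=Q_n$ itself already has order $2^{n+1}$.
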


It follows from Lemma~\ref{lemma:order} that $\left|\left\langle x,y,z\right\rangle\right|\leq 16$ for $x,y,z\in Q_n$. Subloops of size~$16$ of the sedenion loop $\mathbb{S}_{32}$ are either isomorphic to the octonion loop $\mathbb{O}_{16}$, or the quasioctonion loop $\tilde{\mathbb{O}}_{16}$ (see~\cite{Cawagas:04}). In fact, a stronger statement holds.

\begin{lemma}\label{lemma:quasioct}
Let $Q_n$ be a Cayley--Dickson loop. If $x,y,z$ are elements of $Q_{n}$ such that $\left|\left\langle x,y,z\right\rangle\right|=16$, then either
\begin{equation}
\nonumber \left\langle x,y,z\right\rangle \cong \mathbb{O}_{16} \mbox{ or } \left\langle x,y,z\right\rangle \cong \tilde{\mathbb{O}}_{16}.
\end{equation}
\end{lemma}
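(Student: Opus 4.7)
My plan is to reduce the lemma to Cawagas's classification in $\mathbb{S}_{32}$~\cite{Cawagas:04} by showing that a $16$-element, $3$-generated subloop of $Q_n$ is determined up to isomorphism by a small amount of sign data that is already realized inside~$\mathbb{S}_{32}$.

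First I would extract the structural form of $L=\langle x,y,z\rangle$. By Lemma~\ref{lemma:comm-assoc} every commutator and associator of elements of $Q_n$ lies in the central subloop $\{1,-1\}$; combined with $|L|=16$ and Theorem~\ref{cor:Zn} this forces $L/\{1,-1\}\cong(\mathbb{Z}_2)^3$, so each element of $L$ admits a unique expression $\pm x^i y^j z^k$ with $i,j,k\in\{0,1\}$. Diassociativity handles products of any two generators, and Proposition~\ref{prop:prop} gives $x^2=y^2=z^2=-1$. Consequently the whole multiplication table of $L$ is encoded by the four signs
\[
[x,y],\ [x,z],\ [y,z],\ [x,y,z]\in\{1,-1\}.
\]

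Next I would eliminate the sign patterns that are incompatible with $|L|=16$. If some pair, say $x,y$, commuted, then by the Culbert theorem cited above $\langle x,y\rangle$ would be a real or complex group, hence have order at most $4$; Lemma~\ref{lemma:order}(\ref{lemma:order1}) would then give $|L|\le 8$, contradicting $|L|=16$. So $[x,y]=[x,z]=[y,z]=-1$, leaving $[x,y,z]\in\{1,-1\}$ as the only remaining free sign and bounding the number of isomorphism classes of $L$ by two.

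Finally, I would realize both admissible patterns inside the sedenion loop $\mathbb{S}_{32}$: the canonical generators $i_1,i_2,i_3$ span $\mathbb{O}_{16}$ with associator $-1$, while a triple such as $i_1,i_2,i_3 i_4\in\mathbb{S}_{32}$ spans $\tilde{\mathbb{O}}_{16}$ with trivial associator. Since $L$ has sign data matching one of these, the isomorphism $L\to\mathbb{O}_{16}$ or $L\to\tilde{\mathbb{O}}_{16}$ is the one extending the assignment of generators to matching generators. The main obstacle I anticipate is justifying that the four signs above really do determine $L$ up to isomorphism: this requires checking that all further associators and commutators in $L$ (for instance $[xy,z,x]$) are forced by the four-sign data together with the fact that $\{1,-1\}\subseteq Z(L)$, which I would do by a direct expansion using diassociativity and centrality of the signs.
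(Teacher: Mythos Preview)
The paper does not actually prove Lemma~\ref{lemma:quasioct}. It appears in Section~\ref{subsec:CDL} among a list of background results that are explicitly recalled from the author's earlier paper~\cite{Kirshtein:12} (``Let us recall some basic properties of the Cayley--Dickson loops, for more details see~\cite{Kirshtein:12}''), and no argument is supplied here. So there is no in-paper proof to compare your proposal against.

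On the merits of your outline: the reduction to the four signs $[x,y],[x,z],[y,z],[x,y,z]$ and the elimination of the commuting cases via Lemma~\ref{lemma:order} are sound, and the realization of both sign patterns inside $\mathbb{S}_{32}$ is exactly the content of~\cite{Cawagas:04}. The step you flag as the main obstacle is indeed the crux: you must show that, under diassociativity and centrality of signs, the single value $[x,y,z]$ determines every associator $[a,b,c]$ with $a,b,c\in L$, not just permutations of $x,y,z$ (for instance $[xy,z,x]$ or $[x,yz,z]$). Lemma~\ref{1} gives you $[x,y,z]=[z,y,x]$, but you still need to control associators with repeated or composite entries; diassociativity handles repeated generators, and centrality lets you peel off signs, so a systematic case check on words of length at most three in $x,y,z$ will close this. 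That verification is routine but cannot be omitted, since it is precisely what distinguishes the two target loops.
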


Any subloop of size~$16$ of~$Q_n$ containing the element $e$ is isomorphic to the octonion loop $\mathbb{O}_{16}$.

\begin{lemma}\label{lemma:aut2}
Let $Q_n$ be a Cayley--Dickson loop. If $x$ and $y$ are elements of $Q_{n}$ such that $e\notin \left\langle x,y\right\rangle\cong \mathbb{H}_{8}$, then $\left\langle x,y,e\right\rangle\cong\mathbb{O}_{16}$.
\end{lemma}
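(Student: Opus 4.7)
The plan is to first show that $|\langle x,y,e\rangle|=16$, reduce to the case where the generators of the quaternion subloop lie in $Q_{n-1}$, and finally identify the resulting subloop with the octonion loop $\mathbb{O}_{16}$ via the Cayley--Dickson doubling construction. Since $\langle x, y\rangle\cong \mathbb{H}_8$ has $8$ elements and $e\notin\langle x,y\rangle$, Lemma~\ref{lemma:order}(\ref{lemma:order1}) immediately yields $|\langle x,y,e\rangle|=16$, and Lemma~\ref{lemma:quasioct} then narrows the isomorphism type of $\langle x,y,e\rangle$ down to $\mathbb{O}_{16}$ or $\tilde{\mathbb{O}}_{16}$. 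The remaining task is to rule out the quasioctonion alternative by exploiting the distinguished role of $e=i_n$.

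For the reduction, I use the decomposition $Q_n=Q_{n-1}\cup Q_{n-1}e$: every element of $Q_n$ has the form $qe^{\epsilon}$ for some $q\in Q_{n-1}$ and $\epsilon\in\{0,1\}$. Writing $x=ae^{\epsilon_x}$ and $y=be^{\epsilon_y}$ with $a,b\in Q_{n-1}$, diassociativity of $\langle x,e\rangle$ gives $a=xe^{-\epsilon_x}\in\langle x,e\rangle\subseteq\langle x,y,e\rangle$ and similarly $b\in\langle x,y,e\rangle$, so $\langle x,y,e\rangle=\langle a,b,e\rangle$. Since $\langle a,b\rangle\subseteq Q_{n-1}$ while $e\notin Q_{n-1}$, this subloop is the disjoint union $\langle a,b\rangle\cup\langle a,b\rangle e$, which forces $|\langle a,b\rangle|=8$, and Culbert's classification of $2$-generated subloops then gives $\langle a,b\rangle\cong \mathbb{H}_8$.

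It remains to show $\langle a,b,e\rangle\cong \mathbb{O}_{16}$, which I would do by exhibiting the map $i_1\mapsto a$, $i_2\mapsto b$, $i_3\mapsto e$ from $Q_3=\mathbb{O}_{16}$ and verifying that it is a loop isomorphism. This works because the Cayley--Dickson product formulas for $a\cdot(be)$, $(ae)\cdot b$, and $(ae)\cdot(be)$ inside $Q_n$ refer only to the multiplication on $\langle a,b\rangle\cong\mathbb{H}_8$ and the involution $*$, and are formally identical to those used to build $Q_3$ from $Q_2=\mathbb{H}_8$. The main obstacle is purely bookkeeping: carefully tracking the conjugates $c^{*}=-c$ for $c\ne\pm 1$ when matching the two multiplication tables entry by entry, so that the resulting bijection really does respect every product.
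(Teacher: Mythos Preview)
The paper does not actually prove Lemma~\ref{lemma:aut2}; it is listed in Section~\ref{subsec:CDL} among the background results imported from~\cite{Kirshtein:12}, with no accompanying argument. So there is no ``paper's own proof'' to compare against.

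That said, your proposal is a correct proof. The reduction step is the heart of it: writing $x=ae^{\epsilon_x}$, $y=be^{\epsilon_y}$ with $a,b\in Q_{n-1}$, you get $a,b\in\langle x,y,e\rangle$ by diassociativity, hence $\langle x,y,e\rangle=\langle a,b,e\rangle$. The doubling formulas show that $\langle a,b\rangle\cup\langle a,b\rangle e$ is closed under multiplication, so it equals $\langle a,b,e\rangle$; comparing cardinalities forces $|\langle a,b\rangle|=8$ and thus $\langle a,b\rangle\cong\mathbb{H}_8$. Finally, because the Cayley--Dickson product of elements in $\langle a,b\rangle\cup\langle a,b\rangle e$ is expressed purely through the multiplication and conjugation on $\langle a,b\rangle$, any isomorphism $\mathbb{H}_8\to\langle a,b\rangle$ (and one exists sending $i_1\mapsto a$, $i_2\mapsto b$, since any ordered pair of anticommuting order-$4$ elements in $\mathbb{H}_8$ is a generating pair) extends to an isomorphism $\mathbb{O}_{16}\to\langle a,b,e\rangle$ via $i_3\mapsto e$. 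Your assessment that the remaining work is bookkeeping is accurate.

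One small remark: your invocation of Lemma~\ref{lemma:quasioct} is superfluous. Your final step identifies $\langle a,b,e\rangle$ with $\mathbb{O}_{16}$ directly, so you never actually use the dichotomy $\mathbb{O}_{16}$ versus $\tilde{\mathbb{O}}_{16}$. You can drop that citation and streamline the write-up accordingly.
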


Finally, for a nonassociative loop $Q_n$ the automorphism group $Aut(Q_n)$ is a direct product of $(\mathbb{Z}_2)^{n-3}$ and $Aut(\mathbb{O}_{16})$ (it was established in~\cite{KocaKoc:1995} that $Aut(\mathbb{O}_{16})$ has size $1344$ and is an extension of the elementary abelian group $(\mathbb{Z}_{2})^3$ by the simple group of symmetries of the Fano plane $PSL_{2}(7)$).

\begin{theorem}\label{thm:autdirectproduct}
Let $Q_n$ be a Cayley--Dickson loop and let $n\geq 3$. Then $Aut\left(Q_{n}\right)\cong Aut(\mathbb{O}_{16})\times (\mathbb{Z}_2)^{n-3}$. The order of $Aut(Q_{n})$ is therefore $1344\cdot 2^{n-3}$. 
\end{theorem}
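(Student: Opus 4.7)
The plan is to prove the theorem by induction on $n \ge 3$, with the base case $n = 3$ being the cited result $|Aut(\mathbb{O}_{16})| = 1344$ from \cite{KocaKoc:1995}. For the inductive step, I aim to establish $Aut(Q_n) \cong Aut(Q_{n-1}) \times \mathbb{Z}_2$, which combined with the inductive hypothesis gives the stated decomposition.

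I first construct a subgroup of $Aut(Q_n)$ isomorphic to $Aut(Q_{n-1}) \times \mathbb{Z}_2$. For each $\phi \in Aut(Q_{n-1})$, define the canonical extension $\tilde\phi \in Aut(Q_n)$ by $\tilde\phi(x, s) := (\phi(x), s)$ for $x \in Q_{n-1}$ and $s \in \{0, 1\}$. A case-by-case verification against the Cayley--Dickson multiplication rules shows $\tilde\phi$ is indeed an automorphism; the key input is that $\phi$ commutes with conjugation, which follows from $x^{-1} = x^*$ for $x \ne \pm 1$ (Proposition \ref{prop:prop}) together with preservation of inverses. Next, define the sign-flip $\sigma \in Aut(Q_n)$ by $\sigma(x, 0) := (x, 0)$ and $\sigma(x, 1) := (-x, 1)$; another direct check shows $\sigma$ is an involutive automorphism commuting with every $\tilde\phi$. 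Since $\sigma|_{Q_{n-1}} = \mathrm{id}$ while $\sigma(i_n) = -i_n$, the map $(\phi, \epsilon) \mapsto \tilde\phi \sigma^\epsilon$ injects $Aut(Q_{n-1}) \times \mathbb{Z}_2$ into $Aut(Q_n)$ and, by induction, yields an embedded copy of $Aut(\mathbb{O}_{16}) \times (\mathbb{Z}_2)^{n-3}$.

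For surjectivity, I would pass to the quotient $Q_n / \{1, -1\} \cong (\mathbb{Z}_2)^n$ (Theorem \ref{cor:Zn}) and analyze the induced homomorphism $Aut(Q_n) \to GL_n(\mathbb{F}_2)$. The kernel consists of sign-flip automorphisms $\phi_\epsilon(x) := \epsilon(\bar x)\, x$ for linear characters $\epsilon$ of $(\mathbb{Z}_2)^n$; each is easily shown to be an automorphism, giving kernel of order exactly $2^n$. Any element of the image must preserve the bilinear commutator form and trilinear alternating associator form on $(\mathbb{Z}_2)^n$ (Lemma \ref{lemma:comm-assoc}); using Lemma \ref{lemma:aut2} together with the embeddings $Q_k \hookrightarrow Q_n$ for $k \le n$, one sees $\langle i_j, i_k, i_l\rangle \cong \mathbb{O}_{16}$ for every distinct $j, k, l$, so the associator form takes value $-1$ on every distinct triple of generators. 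The task then reduces to showing the image has order at most $168$; combined with the kernel count, this yields $|Aut(Q_n)| \le 2^n \cdot 168 = 1344 \cdot 2^{n-3}$, matching the order of the embedded subgroup and forcing equality.

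The main obstacle is precisely this upper bound of $168$ on the image in $GL_n(\mathbb{F}_2)$. Preservation of the associator and commutator forms alone is insufficient for $n \ge 4$ --- the stabilizer of the associator 3-form contains $S_n$, whose order exceeds $168$ once $n \ge 6$ --- so additional multiplicative constraints coming from the non-associativity of $Q_n$ (in effect, a 2-cocycle condition reflecting the full loop structure) must be exploited. I expect the image to be isomorphic to $GL_3(\mathbb{F}_2)$ acting faithfully on a distinguished 3-dimensional quotient of $(\mathbb{Z}_2)^n$ and trivially on a complementary $(\mathbb{Z}_2)^{n-3}$; once this is established, the direct product decomposition
\[
Aut(Q_n) \;\cong\; \bigl[(\mathbb{Z}_2)^3 \rtimes GL_3(\mathbb{F}_2)\bigr] \times (\mathbb{Z}_2)^{n-3} \;=\; Aut(\mathbb{O}_{16}) \times (\mathbb{Z}_2)^{n-3}
\]
follows from splitting off the trivial $(\mathbb{Z}_2)^{n-3}$ summand of sign-flips that commute with everything in the image of $\Phi$.
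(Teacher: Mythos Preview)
The paper does not prove this theorem. Theorem~\ref{thm:autdirectproduct} is stated in the introductory Section~\ref{subsec:CDL} as a background result, alongside other facts quoted from~\cite{Kirshtein:12} and~\cite{KocaKoc:1995}; no argument is supplied here. So there is nothing to compare your attempt against within this paper.

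That said, your proposal is not a proof but an outline with an explicitly flagged gap, and the gap is the entire content of the theorem. Your lower bound is fine: the extension $\tilde\phi$ and the sign-flip $\sigma$ are genuine automorphisms, and your kernel computation for $Aut(Q_n)\to GL_n(\mathbb{F}_2)$ is correct (every character of $Q_n/\{\pm1\}$ gives a sign automorphism, so the kernel has order exactly $2^n$). But the upper bound $|\mathrm{image}|\le 168$ is where all the work lies, and you concede you do not have it. Your own observation that the stabilizer of the alternating $3$-form already contains $S_n$ shows that the commutator and associator data you have isolated are far too coarse; what actually constrains the image is the full sign cocycle $c:\mathbb{F}_2^n\times\mathbb{F}_2^n\to\{\pm1\}$ determined by $xy=c(\bar x,\bar y)\,\overline{xy}$, and one must show that any $A\in GL_n(\mathbb{F}_2)$ preserving $c$ up to coboundary lies in a copy of $GL_3(\mathbb{F}_2)$. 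Nothing in your sketch addresses this, and in particular your expectation that the image ``acts faithfully on a distinguished $3$-dimensional quotient and trivially on a complement'' is asserted without any mechanism for producing that quotient intrinsically from the loop structure of $Q_n$. Until that is supplied, the argument does not close.
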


\subsection{Inner mapping groups and multiplication groups}\label{subsec:innmlt}
Let us recall some basic facts about inner mapping groups and multiplication groups, notions that are of significant interest and importance in loop theory. Let $Q$ be a loop and $x,a\in Q$. Mappings $L_x(a)=xa$ and $R_x(a)=ax$ are called \emph{left} and \emph{right translations}, these mappings are permutations on~$Q$. Define the following subgroups of~$Sym(Q)$:
\begin{eqnarray}
\nonumber &&\mbox{\emph{the multiplication group of $Q$}, } Mlt(Q)=\left\langle L_{x},R_{x}\left| \right.x\in Q \right\rangle,\\
\nonumber &&\mbox{\emph{the inner mapping group of $Q$}, } Inn(Q)=Mlt(Q)_{1}=\{f\in Mlt(Q)\left| \right.f(1)=1\},\\ 
\nonumber &&\mbox{\emph{the left multiplication group of $Q$}, } Mlt_l(Q)=\left\langle L_{x}\left| \right.x\in Q \right\rangle,\\
\nonumber &&\mbox{\emph{the left inner mapping group of $Q$}, } Inn_l(Q)=Mlt_l(Q)_{1}=\{f\in Mlt_l(Q)\left| \right.f(1)=1\},\\ 
\nonumber &&\mbox{\emph{the right multiplication group of $Q$}, } Mlt_r(Q)=\left\langle R_{x}\left| \right.x\in Q \right\rangle,\\
\nonumber &&\mbox{\emph{the right inner mapping group of $Q$}, } Inn_r(Q)=Mlt_r(Q)_{1}=\{f\in Mlt_r(Q)\left| \right.f(1)=1\}. 
\end{eqnarray}

Let $R_Q=\{R_x\left| \right. x \in Q\}$. Then $R_Q$ is a \emph{left transversal} to $Inn(Q)$ in $Mlt(Q)$, and also a \emph{right transversal} to $Inn(Q)$ in $Mlt(Q)$. That is, for every $f\in Mlt(Q)$ there is a unique $x\in Q$ and a unique $y\in Q$ such that $f\in R_xInn(Q)$, $f\in Inn(Q)R_y$. An analogous statement is true for $L_Q=\{L_x\left| \right. x \in Q\}$. 

Define the \emph{middle, left} and \emph{right inner mappings} on $Q$ by $T_x=L^{-1}_{x}R_x$, $L_{x,y}=L^{-1}_{yx}L_yL_x$, and $R_{x,y}=R^{-1}_{xy}R_yR_x$. 
Note that the inner mapping $T_x$ plays the role of conjugation, and the mappings $L_{x,y}$, $R_{x,y}$ measure deviations from associativity, just as $T_x$ measures deviations from commutativity. In an inverse property loop we have $R^{-1}_x=R_{x^{-1}}$ and $L^{-1}_x=L_{x^{-1}}$.

\begin{theorem}\label{thm:inn-gen}\cite{Pflugfelder:90}
Let $Q$ be a loop. Then 
\begin{eqnarray}
\nonumber Inn(Q)&=&\left\langle L_{x,y}, R_{x,y}, T_x\left|\right.x,y\in Q \right\rangle,\\
\nonumber Inn_l(Q)&=&\left\langle L_{x,y}\left|\right.x,y\in Q \right\rangle,\\
\nonumber Inn_r(Q)&=&\left\langle R_{x,y}\left|\right.x,y\in Q \right\rangle.
\end{eqnarray}
\end{theorem}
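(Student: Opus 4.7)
The plan is to verify the easy containments directly and then establish the reverse containments by a transversal-decomposition argument.

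First, I would check that each listed generator fixes $1$ and hence lies in the stabilizer $Mlt(Q)_{1} = Inn(Q)$ (or its one-sided analogue). Direct computation yields $T_{x}(1) = L_{x}^{-1}(x) = 1$, $L_{x,y}(1) = L_{yx}^{-1}(yx) = 1$, and $R_{x,y}(1) = R_{xy}^{-1}(xy) = 1$, giving the $\supseteq$ inclusions in all three formulas.

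For the reverse inclusion in the two-sided case, let $N = \langle L_{x,y}, R_{x,y}, T_{x} : x,y \in Q\rangle$. I would prove the key structural lemma that every $f \in Mlt(Q)$ admits a decomposition $f = L_{f(1)}\cdot n$ for some $n \in N$. Granted this, any $f \in Inn(Q)$ satisfies $f(1) = 1$, whence $L_{f(1)} = L_{1} = \mathrm{id}$ and $f = n \in N$, completing the proof. The decomposition is proved by induction on the length of $f$ as a word in the generators $L_{x}^{\pm 1}, R_{x}^{\pm 1}$, using three rewriting identities that follow directly from the definitions of the generators of $N$:
\[
L_{y} L_{x} = L_{yx} L_{x,y}, \qquad R_{y} R_{x} = R_{xy} R_{x,y}, \qquad R_{x} = L_{x} T_{x}.
\]
The third identity converts right translations into left translations multiplied by an element of $N$, and the first then absorbs adjacent left translations into a single $L_{(\cdot)}$ at the cost of another $N$-factor. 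The one-sided cases follow the same strategy but use only the first identity for $Inn_{l}$ and only the second for $Inn_{r}$, with $N$ replaced by $\langle L_{x,y}\rangle$ or $\langle R_{x,y}\rangle$ respectively.

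The main obstacle will be extending the induction to the inverse generators $L_{x}^{-1}$ and $R_{x}^{-1}$: one must show that these too lie in $L_{Q}\cdot N$, which forces one to identify $L_{x}^{-1}(1)$ and $R_{x}^{-1}(1)$ (the right and left inverses of $x$ in $Q$, which in a general loop need not coincide) and then to invert the rewriting identities above to express $L_{x}^{-1}$ and $R_{x}^{-1}$ in the desired form. Once one verifies that $L_{Q}\cdot N$ is closed under left multiplication by every generator of $Mlt(Q)$ and its inverse, the induction closes and the three equalities follow at once.
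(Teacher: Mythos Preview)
The paper does not prove this theorem at all: it is stated with the citation \cite{Pflugfelder:90} and used as a standard background result, so there is no ``paper's own proof'' to compare against.

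Your proposal is essentially the classical argument (the one found in Pflugfelder and in Bruck's \emph{Survey}), and it is correct in outline. The three rewriting identities you list are exactly the ones needed, and the strategy of showing $Mlt(Q) = L_{Q}\cdot N$ by induction on word length is the standard route. Your identification of the only real technical wrinkle --- handling $L_{x}^{-1}$ and $R_{x}^{-1}$ --- is also accurate; the fix is to observe that $L_{x}^{-1} \in L_{x\backslash 1}\cdot N$ because $L_{x\backslash 1}^{-1}L_{x}^{-1}$ fixes $1$ and can be expressed via the inverses of your rewriting identities (and similarly for $R_{x}^{-1}$ using $R_{1/x}$). One small correction: $L_{x}^{-1}(1)$ is the \emph{right} inverse $x\backslash 1$ (the solution of $x\cdot y = 1$), not the left inverse, and dually for $R_{x}^{-1}(1)$; this does not affect the argument but the labels in your final paragraph are swapped.
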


\begin{lemma}\label{lemma:mltsize}
Let $Q$ be a finite loop. Then $\left|Mlt(Q)\right|=\left|Q\left|\right|Inn(Q)\right|$, $\left|Mlt_l(Q)\right|=\left|Q\left|\right|Inn_l(Q)\right|$, and $\left|Mlt_r(Q)\right|=\left|Q\left|\right|Inn_r(Q)\right|$.
\end{lemma}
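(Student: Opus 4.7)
The plan is a direct orbit--stabilizer argument applied to the natural action of each multiplication group on the underlying set $Q$. Concretely, let the group $Mlt(Q)\leq \mathrm{Sym}(Q)$ act on $Q$ by evaluation $f\cdot x=f(x)$, and analogously for $Mlt_l(Q)$ and $Mlt_r(Q)$.

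First I would verify transitivity of each action on $Q$. For any $x\in Q$, the right translation $R_x\in Mlt(Q)\cap Mlt_r(Q)$ satisfies $R_x(1)=1\cdot x=x$, and likewise $L_x(1)=x$ with $L_x\in Mlt(Q)\cap Mlt_l(Q)$. Hence the $Mlt(Q)$--orbit, the $Mlt_l(Q)$--orbit, and the $Mlt_r(Q)$--orbit of the neutral element $1$ are all equal to $Q$, i.e., each orbit has size $|Q|$.

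Next, by definition the stabilizers of $1$ under these actions are precisely the inner mapping groups: $Mlt(Q)_1=Inn(Q)$, $Mlt_l(Q)_1=Inn_l(Q)$, and $Mlt_r(Q)_1=Inn_r(Q)$. Applying the orbit--stabilizer theorem in each case yields
\[
|Mlt(Q)|=|Q|\cdot|Inn(Q)|,\quad |Mlt_l(Q)|=|Q|\cdot|Inn_l(Q)|,\quad |Mlt_r(Q)|=|Q|\cdot|Inn_r(Q)|,
\]
which is the claim. There is really no main obstacle here: the only thing to check carefully is that the relevant translation actually lies in the subgroup under consideration (so that transitivity is witnessed inside $Mlt_l(Q)$ or $Mlt_r(Q)$ and not merely inside the full $Mlt(Q)$), and this is immediate from the generating sets of these subgroups given just before the lemma.
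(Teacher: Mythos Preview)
Your orbit--stabilizer argument is correct and is the standard route; the paper itself states this lemma without proof, though the remark immediately preceding it (that $R_Q$ is a transversal to $Inn(Q)$ in $Mlt(Q)$, and similarly for $L_Q$) amounts to the same index computation $[Mlt(Q):Inn(Q)]=|Q|$. So your approach and the paper's implicit justification coincide.
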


\begin{lemma}\label{rmk:mlt_g}
Let $G$ be a group. If $G$ is abelian, then $Mlt(G)\cong G$ and $Inn(G)\cong\{1\}$. If $G$ is not abelian, then $\nonumber Mlt(G)\cong (G\times G)\slash \{(g,g)\left|\right.g\in Z(G)\}$ and $Inn(G)\cong G\slash Z(G)$.
\end{lemma}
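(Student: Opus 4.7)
The plan is to exhibit $Mlt(G)$ as a quotient of $G\times G$. The starting observation is that in any group one has $L_xL_y=L_{xy}$, $R_xR_y=R_{yx}$, and associativity $(xa)y=x(ay)$ translates into the commutation $L_xR_y=R_yL_x$ for all $x,y\in G$. Consequently every word in the $L_x$ and $R_y$ can be rewritten in the form $L_aR_b$, so $Mlt(G)=\{L_aR_b:a,b\in G\}$.

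Define $\phi:G\times G\to Sym(G)$ by $\phi(a,b)(x)=axb^{-1}$. A direct check using the commutation of left and right translations shows $\phi(a_1,b_1)\phi(a_2,b_2)=\phi(a_1a_2,b_1b_2)$, so $\phi$ is a group homomorphism. Since $L_a=\phi(a,1)$, $R_b=\phi(1,b^{-1})$, and $\phi(a,b)=L_aR_{b^{-1}}$, the image of $\phi$ is exactly $Mlt(G)$. The kernel consists of pairs $(a,b)$ with $axb^{-1}=x$ for all $x$; taking $x=1$ forces $a=b$, and then $ax=xa$ for all $x$ says $a\in Z(G)$. Thus $\ker\phi=\{(z,z):z\in Z(G)\}$, and the first isomorphism theorem delivers the claimed description of $Mlt(G)$.

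For $Inn(G)$, the stabilizer of $1$ in $Mlt(G)$ consists of those $\phi(a,b)$ with $ab^{-1}=1$, i.e., with $a=b$; so $Inn(G)=\{\phi(a,a):a\in G\}$, the set of conjugations $x\mapsto axa^{-1}$. The map $a\mapsto\phi(a,a)$ is a surjective homomorphism $G\to Inn(G)$ with kernel precisely $Z(G)$, giving $Inn(G)\cong G/Z(G)$. The abelian case is immediate: $L_a=R_a$ makes $Mlt(G)=L(G)\cong G$, and $Z(G)=G$ forces $Inn(G)$ to be trivial. There is no deep obstacle; the only point requiring care is the placement of the inverse in the second coordinate of $\phi$, which is what makes $\phi$ a homomorphism rather than an antihomomorphism and what cleanly exposes the center as the kernel.
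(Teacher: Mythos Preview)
Your proof is correct and is the standard argument for this well-known fact. Note that the paper does not actually prove this lemma: it is stated without proof in the introductory material as background, so there is no ``paper's own proof'' to compare against. Your write-up would serve perfectly well as a proof of the statement.
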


\subsection{Overview}\label{subsec:overview}
%
%

For a Cayley--Dickson loop $Q_n$ we study inner mapping groups $Inn(Q_n)$ and multiplication groups $Mlt(Q_n)$. This work is organized as follows. In Section~\ref{sec:inn} we establish that elements of $Mlt(Q_n)$ are even permutations and have order $1,2$ or~$4$. We prove that $Inn(Q_n)$ is an elementary abelian $2$-group of order $2^{2^n-2}$, moreover, every $f \in Inn(Q_n)$ is a product of disjoint transpositions of the form $(x, -x)$. This implies that nonassociative Cayley--Dickson loops are not automorphic. We begin Section~\ref{sec:mult} by proving a number of lemmas that allow to construct an elementary abelian $2$-group $K$ based on left translations by the canonical generators of $Q_n$. We then show that the direct product of $Inn(Q_n)$ and a cyclic group $\mathbb{Z}_2$ is a normal subgroup of $Mlt(Q_n)$, and that $Mlt(Q_n)$ is a semidirect product of $Inn(Q_n)\times\mathbb{Z}_2$ and $K$. 
In Section~\ref{sec:rlinn} we prove that the groups $Inn_l(Q_n)$ and $Inn_r(Q_n)$ are equal, elementary abelian $2$-groups of order $2^{2^{n-1}-1}$, and establish that the groups $Mlt_l(Q_n)$ and $Mlt_r(Q_n)$ are isomorphic. We conclude with Section~\ref{sec:rlmlt} showing that $Mlt_l(Q_n)$ is a semidirect product of $Inn_l(Q_n)\times\mathbb{Z}_2$ and $K$.
We used GAP system for computational discrete algebra~\cite{GAP4}, specifically the LOOPS package~\cite{NagyVojt:06}, to perform numerical experiments and verify conjectures, however, the final results do not rely on computational proofs. 
\section{Inner Mapping Groups}\label{sec:inn}

In this section we discuss inner mapping groups and begin to study multiplication groups of the Cayley--Dickson loops $Q_n$. When $n\leq 2$, the loop $Q_n$ is a group, and the structure of $Mlt(Q_n)$ and $Inn(Q_n)$ is known (see Lemma~\ref{rmk:mlt_g}). We therefore focus on nonassociative Cayley--Dickson loops~$Q_n$,~$n\geq 3$. 

\begin{lemma}\label{lemma:mlt-even}
Let $Q_n$ be a Cayley--Dickson loop. Elements of $Mlt(Q_n)$ are even permutations.
\end{lemma}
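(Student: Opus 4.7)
Since $Mlt(Q_n)$ is generated by the translations $L_x$ and $R_x$ for $x\in Q_n$, and the even permutations form a subgroup of $Sym(Q_n)$, it suffices to prove that each generator $L_x$ and each generator $R_x$ is an even permutation. I will do this by determining the cycle type of $L_x$ (the argument for $R_x$ is analogous) and then counting parities.

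The plan is to first show that every cycle of $L_x$ has length exactly equal to $|x|$. For $a\in Q_n$, the subloop $\left\langle x,a\right\rangle$ is a group by diassociativity, so within it one may associate freely; iterating $L_x$ on $a$ therefore yields $L_x^k(a)=x^k a$ for every $k\geq 0$. Hence the $L_x$-orbit of $a$ has size equal to the smallest $k\geq 1$ with $x^k=1$, that is, to $|x|$. By Proposition~\ref{prop:prop}(3) we have $|x|\in\{1,2,4\}$, so $L_x$ is a disjoint product of $2^{n+1}/|x|$ cycles, all of the same length~$|x|$.

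From this the parity calculation is routine. If $|x|=1$, then $L_x$ is the identity. If $|x|=2$ (which forces $x=-1$), then $L_x$ is a product of $2^n$ disjoint transpositions, and since $n\geq 3$ this count is even, so $L_x$ is even. If $|x|=4$, then $L_x$ is a product of $2^{n-1}$ disjoint $4$-cycles; each $4$-cycle is odd, but for $n\geq 3$ we have $2^{n-1}$ even, so $L_x$ is even. The same reasoning applied to the right translation $R_x$ (using diassociativity to compute $R_x^k(a)=ax^k$) shows that each $R_x$ is even as well.

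The key structural input is diassociativity, which forces $L_x$ to act as multiplication by powers of $x$ on every orbit and thus forces a uniform cycle length. No real obstacle is expected here; the only thing to keep an eye on is that the hypothesis $n\geq 3$ (operative throughout this section) is exactly what makes the counts $2^n$ and $2^{n-1}$ even.
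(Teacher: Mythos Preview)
Your proposal is correct and follows essentially the same approach as the paper: compute the cycle type of each translation from the order of $x$ (via diassociativity), count the number of cycles as $2^{n+1}/|x|$, and check parity case by case. The paper's proof is slightly terser but identical in substance; your explicit remark that the evenness of $2^n$ and $2^{n-1}$ uses the standing hypothesis $n\geq 3$ (in fact $n\geq 2$ already suffices) is a welcome clarification.
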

\begin{proof}
Consider $L_x$. If $|x|=1$ then $L_x = id$. If $|x|=2$ then $L_xL_x(y) = xxy = y$ for every $y$, so $L_x$ is a product of $\left|Q_n\right|/2=2^{n}$ transpositions (of the form $(y, xy)$), and since $2^{n}$ is even, $L_x$ is even. If $|x|=4$ then $L_x$ is a product of $2^{n-1}$ $4$-cycles (of the form $(y, xy, xxy, xxxy)$), and since $2^{n-1}$ is even, $L_x$ is even. Similarly for right translations. Hence $Mlt(Q_n)$ is generated by even permutations, and it therefore consists of even permutations.
\end{proof}
If $\left\langle x,y,z\right\rangle\leq \mathbb{H}_8$, then $\left\langle x,y,z\right\rangle$ is a group and $[y,x,z]=\left[z,x,y\right]=1$. If $\left|\left\langle x,y,z\right\rangle\right|=16$, then 
\[
\left\langle x,y,z\right\rangle=\pm \{1,x,y,xy,z,xz,yz,(xy)z\},
\]
where all elements are distinct. This implies that $z\notin \pm \{1,xy\}$, $x\notin \pm \{1,y\}$, $x\notin \pm \{1,zy\}$, $y\notin \pm \{1,z\}$, and by Lemma~\ref{lemma:comm-assoc}
\begin{equation}\label{eqn:commsign}
[xy,z] = [x,y] = [x,zy] = [y,z] = -1.
\end{equation}

\begin{lemma}\label{1}
Let $x,y,z$ be elements of a Cayley--Dickson loop $Q_n$, then
\[   [x,y,z]=[z,y,x]. \]
\end{lemma}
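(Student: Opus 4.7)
The plan is to exploit two structural features of $Q_n$: the anti-automorphic inverse property $(ab)^{-1}=b^{-1}a^{-1}$, which follows from diassociativity (hence from the inverse property), and the centrality of the associator, which by Lemma~\ref{lemma:comm-assoc} lies in $\{1,-1\}\subseteq Z(Q_n)$ and is therefore self-inverse.

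First, starting from the defining identity $(xy)z=(x(yz))\cdot [x,y,z]$, I would invert both sides. Applying AAIP repeatedly and using that $[x,y,z]$ is central and self-inverse, the two sides become
\[
z^{-1}(y^{-1}x^{-1})=[x,y,z]\cdot\bigl((z^{-1}y^{-1})x^{-1}\bigr).
\]
Rearranging (again using centrality of $[x,y,z]$) gives $(z^{-1}y^{-1})x^{-1}=\bigl(z^{-1}(y^{-1}x^{-1})\bigr)\cdot [x,y,z]$, which by the very definition of the associator is $[z^{-1},y^{-1},x^{-1}]=[x,y,z]$.

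Second, I would translate inverses back to elements. If any of $x,y,z$ is in $\{1,-1\}$, both $[x,y,z]$ and $[z,y,x]$ equal $1$ trivially, since $\pm 1\in Z(Q_n)$. Otherwise, Proposition~\ref{prop:prop} gives $x^{-1}=-x$, $y^{-1}=-y$, $z^{-1}=-z$, so the previous step reads $[-z,-y,-x]=[x,y,z]$. In the defining equation for $[-z,-y,-x]$, I would pull each $-1$ out using its centrality and $(-1)^2=1$; the resulting sign factors on the two sides collapse to the same $-1$ and cancel, reducing the identity to $(zy)x=(z(yx))\cdot [-z,-y,-x]$. Comparison with the defining equation of $[z,y,x]$ then yields $[-z,-y,-x]=[z,y,x]$, and chaining the two equalities gives the claim.

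The main point requiring care is the AAIP step: because $Q_n$ is nonassociative, parenthesizations must be tracked exactly, and the central factor $[x,y,z]$ must be moved past products without silently reassociating anything. A case-analytic alternative using Lemma~\ref{lemma:comm-assoc} and the identity $\langle x,y,z\rangle=\langle z,y,x\rangle$ handles the $\mathbb{H}_8$ and $\mathbb{O}_{16}$ cases immediately but appears to leave the $\tilde{\mathbb{O}}_{16}$ case unresolved, which is why I prefer the uniform algebraic argument above.
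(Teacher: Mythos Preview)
Your proof is correct, but it follows a genuinely different route from the paper's.

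The paper argues by a chain of commutator and associator moves: starting from $(xy)z$, it successively commutes and reassociates until it returns to $(xy)z$ multiplied by the factor $[xy,z][x,y][z,y,x][x,zy][y,z][x,y,z]$, forcing this product to be $1$. It then splits into two cases: if $\langle x,y,z\rangle$ is a group both associators are $1$; otherwise $|\langle x,y,z\rangle|=16$, and each of the four commutators equals $-1$ by~\eqref{eqn:commsign} (each relevant pair generates a copy of $\mathbb{H}_8$), whence $[x,y,z][z,y,x]=1$. In particular the paper's case split is \emph{not} the one you worried about (via Lemma~\ref{lemma:comm-assoc} on the isomorphism type of $\langle x,y,z\rangle$): it sidesteps the $\tilde{\mathbb{O}}_{16}$ issue by reducing everything to commutators of pairs, where Lemma~\ref{lemma:comm-assoc} is decisive.

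Your argument instead inverts the defining identity via the anti-automorphic inverse property to obtain $[x,y,z]=[z^{-1},y^{-1},x^{-1}]$ in one step, and then uses $a^{-1}=-a$ for $a\neq\pm1$ together with centrality of $-1$ to strip the signs. This is more conceptual and economical: it never invokes~\eqref{eqn:commsign} or any subloop dichotomy, and it would go through verbatim in any diassociative loop whose associators are central involutions and whose noncentral elements satisfy $a^{-1}=-a$. The paper's computation, by contrast, yields as a byproduct a relation between commutators and associators that is in the spirit of identities used later, but for the bare statement your route is the shorter one.
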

\begin{proof}
We have
\begin{eqnarray*}
    xy\cdot z &=& [xy,z]z\cdot xy = [xy,z][x,y]z\cdot yx = [xy,z][x,y][z,y,x]zy\cdot x\\
     &=& [xy,z][x,y][z,y,x][x,zy]x\cdot zy = [xy,z][x,y][z,y,x][x,zy][y,z]x\cdot yz\\
     &=& [xy,z][x,y][z,y,x][x,zy][y,z][x,y,z]xy\cdot z.
\end{eqnarray*}
If $\langle x,y,z\rangle$ is a group then we are done, else $[xy,z] = [x,y] = [x,zy] = [y,z] = -1$ by \eqref{eqn:commsign} and we are done again.
\end{proof}

\begin{lemma}\label{lemma:Tx}
Let $Q_n$ be a Cayley--Dickson loop, and let $x,y\neq\pm 1$, $x\neq \pm y$ be elements of $Q_n$. Then
\begin{eqnarray}
\nonumber \label{eqn:Tx1} T_{x}&=&\prod_{1,x\neq z\in Q_n\slash\{\pm1\}}(z,-z),\\
\label{eqn:Tx3} T_{y}T_{x}&=&(x,-x)(y,-y).\\
\nonumber \label{eqn:Lx1} L_{x,e}&=&\prod_{1,x,e,xe\neq z\in Q_n\slash\{\pm1\}}(z,-z),\\
\label{eqn:Rx3} L_{y,e}L_{x,e}&=&(x,-x)(y,-y)(xe,-xe)(ye,-ye),\ \ \mbox{for } x,y\neq\pm e.
\end{eqnarray}
\end{lemma}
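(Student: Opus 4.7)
The four identities split naturally into two pairs: first compute $T_x$ and $L_{x,e}$ as products of disjoint transpositions $(z,-z)$, then obtain the composition formulas for $T_yT_x$ and $L_{y,e}L_{x,e}$ by cancelling common factors.

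For $T_x$ the plan is to evaluate $T_x(z) = x^{-1}(zx)$ inside the subgroup $\langle x, z\rangle$, which is associative by diassociativity. By the two-generator classification of subloops of $Q_n$, this subgroup is real, complex, or the quaternion group. If $z \in \{\pm 1, \pm x\}$ then $\langle x, z\rangle$ is abelian and $T_x(z) = z$; otherwise $\langle x, z\rangle \cong \mathbb{H}_8$, Lemma~\ref{lemma:comm-assoc} gives $[x,z] = -1$, and hence $zx = -xz$ and $T_x(z) = -z$. Identifying the pairs $\{z,-z\}$ with cosets in $Q_n/\{\pm 1\}$ yields the first formula.

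For $L_{x,e}$ (under the implicit assumption $x \neq \pm e$, so that $\langle e, x\rangle \cong \mathbb{H}_8$) the plan is to expand $L_{x,e}(z) = (ex)^{-1}(e(xz))$ via the defining associator identity $(ex)\cdot z = (e\cdot xz)\,[e,x,z]$. Since $[e,x,z]\in \{\pm 1\}\subseteq Z(Q_n)$ and $\langle ex, z\rangle$ is associative, this collapses to $L_{x,e}(z) = [e,x,z]\cdot z$. The sign is controlled by Lemma~\ref{lemma:comm-assoc}: by Lemma~\ref{lemma:order}, $|\langle e, x, z\rangle|$ equals $8$ when $z \in \{\pm 1, \pm x, \pm e, \pm xe\}$ (in which case $\langle e, x, z\rangle = \langle e, x\rangle \leq \mathbb{H}_8$ and $[e,x,z]=1$), and $16$ otherwise, in which case $\langle e, x, z\rangle$ is a size-$16$ subloop containing $e$, hence isomorphic to $\mathbb{O}_{16}$ by Lemma~\ref{lemma:aut2}, giving $[e,x,z]=-1$.

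The two composition formulas then follow from a disjoint-cycle calculation: each of $T_x, T_y, L_{x,e}, L_{y,e}$ is an involution whose nontrivial cycles are the commuting transpositions $(z,-z)$, so the product of two such involutions retains exactly the transpositions in the symmetric difference of the supports. For $T_yT_x$ the fixed sets $\{\pm 1, \pm x\}$ and $\{\pm 1, \pm y\}$ have symmetric difference $\{\pm x, \pm y\}$, yielding $(x,-x)(y,-y)$; the same argument gives the formula for $L_{y,e}L_{x,e}$. The main obstacle is the derivation of $L_{x,e}(z) = [e,x,z]\cdot z$: the nonassociative manipulation requires careful use of diassociativity together with the centrality of $\pm 1$ to move $[e,x,z]$ past the outer parentheses, and one must verify that potential coincidences among $\{\pm x, \pm y, \pm xe, \pm ye\}$ (for instance $y = \pm xe$, forcing $ye = \mp x$) are consistent with the four-transposition product on the right of the last identity.
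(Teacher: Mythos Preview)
Your proposal is correct and follows essentially the same approach as the paper: reduce $T_x(z)$ and $L_{x,e}(z)$ to $[x,z]z$ and $[e,x,z]z$ via diassociativity and centrality of $\pm 1$, then read off the sign from the structure of $\langle x,z\rangle$ (Lemma~\ref{lemma:comm-assoc}) and of $\langle e,x,z\rangle$ (Lemmas~\ref{lemma:aut2} and~\ref{lemma:comm-assoc}), and finally obtain the composite formulas by cancelling common $(z,-z)$ transpositions. The only minor difference is that the paper records the general identities $T_x(z)=[x,z]z$, $L_{x,y}(z)=[y,x,z]z$, $R_{x,y}(z)=[z,x,y]z$ for arbitrary $y$ (not just $y=e$) inside this proof, since they are reused in Corollary~\ref{cor:RLmaps}; your treatment of the degenerate case $y=\pm xe$ is a point the paper leaves implicit.
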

\begin{proof}
Consider $T_x, R_{x,y}, L_{x,y}$ acting on $z\in Q_n$. Using diassociativity, 
\begin{eqnarray}
\label{gen1} T_{x}(z)&=&x^{-1}(zx)=[x,z]x^{-1}(xz)=[x,z](x^{-1}x)z=[x,z]z,\\
\label{gen3} L_{x,y}(z)&=&(yx)^{-1}(y(xz))=[y,x,z](yx)^{-1}((yx)z)\\
\nonumber 						 &=&[y,x,z]((yx)^{-1}(yx))z=[y,x,z]z,\\
\label{gen2} R_{x,y}(z)&=&((zx)y)(xy)^{-1}=[z,x,y](z(xy))(xy)^{-1}\\
\nonumber							 &=&[z,x,y]z((xy)(xy)^{-1})=[z,x,y]z.
\end{eqnarray}
Let $x,y\neq \pm 1$, $x\neq \pm y$. If $z\in \pm \{1,x\}$, then $\left\langle x,z\right\rangle\cong \left\langle x\right\rangle \cong \mathbb{C}_4$, and $[x,z]=1$. Otherwise, $\left\langle x,z\right\rangle\cong \mathbb{H}_{8}$, and $[x,z]=-1$. Using \eqref{gen1}, 
\begin{equation*}
\label{genT} T_{x}(z)=[x,z]z=\begin{cases}
z, &\mbox{if }z\in \pm \{1,x\},\\
-z &\mbox{otherwise.}
\end{cases}
\end{equation*}
Similarly, if $z\in \pm \{x,y\}$, then $[y,z][x,z]=-1$. Otherwise, if $z\neq \pm 1$, then $\left\langle x,z\right\rangle\cong \left\langle y,z\right\rangle\cong \mathbb{H}_{8}$, and $[y,z]=[x,z]=-1$, if $z= \pm 1$, then $\left\langle x,z\right\rangle\cong \left\langle y,z\right\rangle\cong \mathbb{C}_{4}$, and $[y,z]=[x,z]=1$. We get
\begin{equation*}\label{eqn:inn_flip1} 
T_{y}T_{x}(z)=[y,z][x,z]z=\begin{cases}
-z, &\mbox{if }z\in \pm \{x,y\},\\
z &\mbox{otherwise.}
\end{cases}
\end{equation*}   
Let $x,y\neq \pm e$. If $z\in \pm \{1,x,e,xe\}$, then $\left\langle e,x,z\right\rangle\cong \left\langle e,x\right\rangle \cong \mathbb{H}_8$, and $[e,x,z]=1$. Otherwise, $\left\langle e,x,z\right\rangle\cong \mathbb{O}_{16}$ by Lemma~\ref{lemma:aut2}, and $[e,x,z]=-1$. Using \eqref{gen3},
\begin{equation*}
\label{genL} L_{x,e}(z)=[e,x,z]z=
\begin{cases}
z, &\mbox{if }z\in \pm \{1,x,e,xe\},\\
-z &\mbox{otherwise.}
\end{cases}
\end{equation*}
Similarly,
\begin{equation*}\label{eqn:rinn_flip1} 
L_{y,e}L_{x,e}(z)=[e,y,z][e,x,z]z=
\begin{cases}
-z, &\mbox{if }z\in \pm \{x,y,xe,ye\},\\
z &\mbox{otherwise.}
\end{cases}
\qedhere
\end{equation*}   
\end{proof}
\begin{corollary}\label{cor:RLmaps}
Let $Q_n$ be a Cayley--Dickson loop. Then 
\[
L_{x,y}=R_{x,y}\mbox{ for all }x,y\in Q_n.
\]
\end{corollary}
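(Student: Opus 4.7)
The plan is to read off the action of $L_{x,y}$ and $R_{x,y}$ on an arbitrary element $z$ from the computations already carried out in the proof of Lemma~\ref{lemma:Tx}, and then invoke the symmetry of associators established in Lemma~\ref{1} to identify them.

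First I would recall the two identities derived from diassociativity inside the proof of Lemma~\ref{lemma:Tx}, namely
\[
L_{x,y}(z) = [y,x,z]\,z \qquad\text{and}\qquad R_{x,y}(z) = [z,x,y]\,z
\]
for every $z \in Q_n$. These hold for all $x,y \in Q_n$, with no restriction, since the diassociative reductions used to derive them only require the pairs $\{yx\}$ and $\{xy\}$ to generate groups, which is automatic in $Q_n$.

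Next I would apply Lemma~\ref{1}, which states $[a,b,c]=[c,b,a]$ for all $a,b,c \in Q_n$. Taking $a=y$, $b=x$, $c=z$ gives $[y,x,z] = [z,x,y]$. Substituting into the two formulas above yields $L_{x,y}(z) = R_{x,y}(z)$ for every $z$, whence $L_{x,y}=R_{x,y}$ as permutations of $Q_n$.

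There is essentially no obstacle here: the whole argument is a one-line consequence of Lemma~\ref{1} combined with the explicit formulas for $L_{x,y}$ and $R_{x,y}$ that were already computed. The only thing worth double-checking is that the derivations \eqref{gen3} and \eqref{gen2} in the proof of Lemma~\ref{lemma:Tx} are valid for all $x,y \in Q_n$ (not just those satisfying $x,y \neq \pm e$), which is clear because diassociativity of $Q_n$ is unconditional.
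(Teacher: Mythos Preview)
Your proof is correct and essentially identical to the paper's own argument: the paper also invokes Lemma~\ref{1} to get $[y,x,z]=[z,x,y]$ and then reads off $L_{x,y}=R_{x,y}$ from identities \eqref{gen3} and \eqref{gen2} in the proof of Lemma~\ref{lemma:Tx}. Your additional remark that those identities were derived from diassociativity alone (hence hold for all $x,y$) is a worthwhile clarification.
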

\begin{proof}
Let $x,y,z\in Q_n$. By Lemma~\ref{1},
\begin{equation}
\nonumber \left[y,x,z\right]=[z,x,y],
\end{equation}
$L_{x,y}=R_{x,y}$ follows from \eqref{gen3}, \eqref{gen2} in Lemma~\ref{lemma:Tx}.
\end{proof}
\begin{theorem}\label{thm:inn-invol}
Let $Q_n$ be a Cayley--Dickson loop, $n\geq1$. Then $Inn(Q_n)$ is an elementary abelian $2$-group of order $2^{2^n-2}$. Moreover, every $f\in Inn(Q_n)$ is a product of disjoint transpositions of the form $(x,-x)$.
\end{theorem}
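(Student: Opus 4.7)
The plan is to identify $Inn(Q_n)$ with the group $E$ of parity-even antipodal-swap permutations. Let $F$ be the subgroup of $Sym(Q_n)$ consisting of permutations that fix $1$ and $-1$ and stabilize every remaining antipodal pair $\{z,-z\}$ setwise; equivalently, each $\sigma\in F$ is a product of disjoint transpositions of the form $(z,-z)$ with $z\neq\pm 1$. Since there are $2^n-1$ such pairs, $F\cong(\mathbb{Z}_2)^{2^n-1}$. Let $E=F\cap Alt(Q_n)$; this is the index-$2$ subgroup whose elements use an even number of transpositions, so $E$ is elementary abelian of order $2^{2^n-2}$.

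To prove $Inn(Q_n)\subseteq E$, recall from Theorem~\ref{thm:inn-gen} and Corollary~\ref{cor:RLmaps} that $Inn(Q_n)=\langle T_x,L_{x,y}\mid x,y\in Q_n\rangle$. The formulas derived in the proof of Lemma~\ref{lemma:Tx} give $T_x(z)=[x,z]z$ and $L_{x,y}(z)=[y,x,z]z$, and Lemma~\ref{lemma:comm-assoc} guarantees $[x,z],[y,x,z]\in\{1,-1\}$. Hence every generator sends each $z$ to $\pm z$ and fixes $\pm 1$, so lies in $F$. Lemma~\ref{lemma:mlt-even} then adds that $Inn(Q_n)\subseteq Alt(Q_n)$, yielding $Inn(Q_n)\subseteq F\cap Alt(Q_n)=E$.

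For the reverse containment, equation~\eqref{eqn:Tx3} of Lemma~\ref{lemma:Tx} already supplies the building block $T_yT_x=(x,-x)(y,-y)$ for all $x,y\in Q_n\setminus\{\pm 1\}$ with $x\neq\pm y$. Any $\sigma\in E$ is a product of an even number $2k$ of disjoint transpositions $(x_i,-x_i)$, with the $x_i$ lying in pairwise distinct antipodal classes; pairing them off two at a time gives $\sigma=\prod_{j=1}^{k}T_{x_{2j-1}}T_{x_{2j}}\in Inn(Q_n)$. Combining the inclusions, $Inn(Q_n)=E$, which simultaneously delivers the elementary abelian structure, the order $2^{2^n-2}$, and the ``product of disjoint $(x,-x)$ transpositions'' description.

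The delicate point is the sharpness of the upper bound: without invoking the parity argument of Lemma~\ref{lemma:mlt-even} one only obtains $|Inn(Q_n)|\leq 2^{2^n-1}$, overshooting by a factor of two. The parity constraint halves this bound and matches the lower bound from the $T_yT_x$ generators exactly. I expect this matching of the two bounds, rather than any individual computation, to be the technical heart of the proof.
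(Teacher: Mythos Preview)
Your proof is correct and follows the same overall architecture as the paper: sandwich $Inn(Q_n)$ between the group $E$ of even antipodal-swap permutations from above (using parity via Lemma~\ref{lemma:mlt-even}) and from below (using the explicit double transpositions $T_yT_x=(x,-x)(y,-y)$ from Lemma~\ref{lemma:Tx}).

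The one genuine difference lies in how you obtain the containment $Inn(Q_n)\subseteq F$. The paper argues structurally: since $Q_n$ is Hamiltonian, each cyclic subloop $\langle x\rangle=\{1,x,-1,-x\}$ is normal and therefore setwise fixed by every inner mapping, forcing $f(x)\in\{x,-x\}$ for any $f\in Inn(Q_n)$. You instead argue via the generating set, invoking the explicit formulas $T_x(z)=[x,z]z$, $L_{x,y}(z)=[y,x,z]z$ (and $R_{x,y}=L_{x,y}$) together with Lemma~\ref{lemma:comm-assoc} to see directly that every generator already lies in $F$. Your route avoids the Hamiltonian property at the cost of leaning on the generator computations in Lemma~\ref{lemma:Tx}; the paper's route is more conceptual and would transfer to any Hamiltonian loop with center $\{1,-1\}$ and cyclic subloops of this shape. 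Both are perfectly valid and equally short.
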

\begin{proof}
Recall that $Z(Q_n) = \{1,-1\}$. Inner mappings fix $Z(Q_n)$ pointwise, therefore 
\[
f(1) = 1, f(-1) = -1.
\]
Let $x\in Q_n, x\neq \pm 1$. Then $|x| = 4$ and $S = \left\langle x\right\rangle = \{1, x, -1, -x\}$. We know that $Q_n$ is Hamiltonian, therefore $S\trianglelefteq Q_n$. Inner mappings fix normal subloops, thus $f(S) = S$, and it follows that either $f(x) = x$, $f(-x) = -x$, or $f(x) = -x$, $f(-x) = x$. 
Hence every $f$ has the desired form. In particular, $|f| = 2$.
A group of exponent $2$ is an elementary abelian $2$-group.\\
Let $e=i_{n}$ be a canonical generator of $Q_n$, let $x\in Q_n$, $x\notin \pm\left\{1,e\right\}$. Then $T_{x}T_{e}=(x,-x)(e,-e)$ by Lemma~\ref{lemma:Tx}. For every $f\in Inn(Q_n)$, there is $\tilde{f}=T_{x}T_{e}f\in Inn(Q_n)$ such that 
\begin{equation*}
\tilde{f}(z)=
\begin{cases}
-f(z), &\mbox{when }z\in \pm\{x,e\},\\
f(z), &\mbox{otherwise}.
\end{cases}
\end{equation*}
Also, the values of $f(e),f(-e)$ are uniquely determined by the values of $f(z),z\neq\pm e$, since $f$ should remain an even permutation by Lemma~\ref{lemma:mlt-even} (see Figure~\ref{fig:inn}). \\
\begin{figure}[H]
	\centering
		\includegraphics[width=0.6\textwidth]{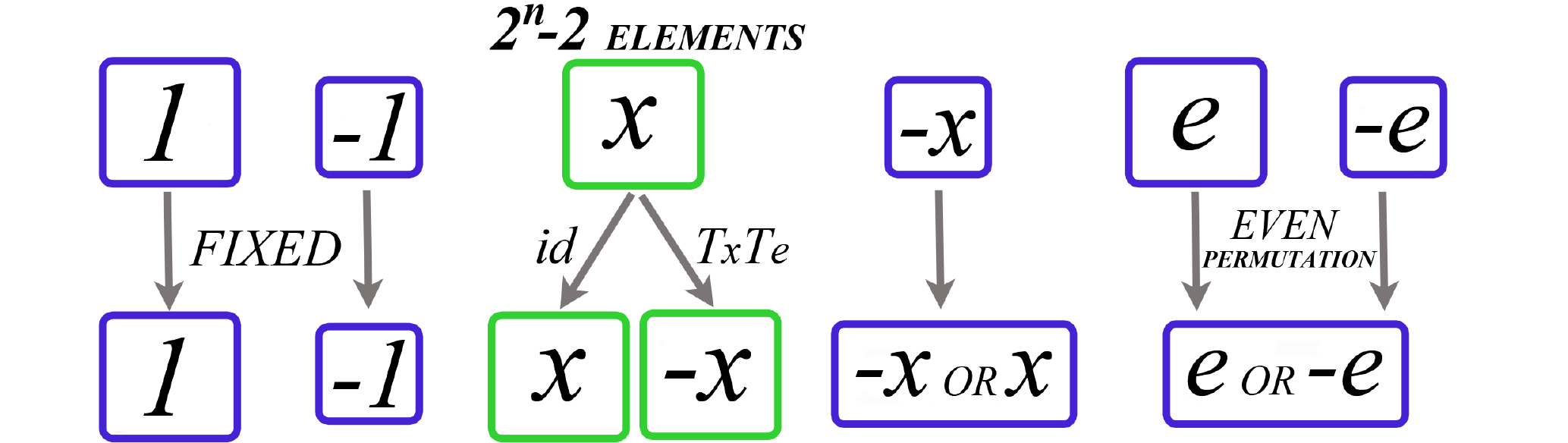}
	\caption{Inner mapping group of $Q_n$}
	\label{fig:inn}
\end{figure}

It follows that 
\begin{equation*}
\left|Inn(Q_n)\right|=2^{\left|Q_n\right|/2-2}=2^{2^n-2}.
\qedhere
\end{equation*}
\end{proof}
\begin{lemma}\label{lemma:inn-cycles}
Let $f\in Mlt(Q_n)$, then $\left|f\right|\in\{1,2,4\}$. In particular, $f$ is a product of disjoint $2$-cycles and $4$-cycles.
\end{lemma}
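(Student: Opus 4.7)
The plan is to study the action of $Mlt(Q_n)$ on the central quotient $Q_n/\{1,-1\}$, which by Theorem~\ref{cor:Zn} is the elementary abelian $2$-group $(\mathbb{Z}_2)^n$. The key observation is that every generator of $Mlt(Q_n)$ preserves the partition of $Q_n$ into pairs $\{x,-x\}$: indeed $R_a(\{x,-x\})=\{xa,-xa\}$ and $L_a(\{x,-x\})=\{ax,-ax\}$. Hence each $f\in Mlt(Q_n)$ descends to a permutation $\bar f$ of $Q_n/\{1,-1\}$, yielding a homomorphism $\phi\colon Mlt(Q_n)\to Sym(Q_n/\{1,-1\})$.

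I would then compute $\phi$ on the generators. Because every commutator of $Q_n$ lies in $\{1,-1\}$ by Lemma~\ref{lemma:comm-assoc}, we have $\overline{ax}=\overline{xa}=\bar x\bar a$, so both $\bar L_a$ and $\bar R_a$ act as translation by $\bar a$ on the abelian quotient. Consequently the image of $\phi$ sits inside the translation group of $(\mathbb{Z}_2)^n$, which has exponent~$2$. In particular $\phi(f)^2=id$ for every $f\in Mlt(Q_n)$, and therefore $f^2\in\ker\phi$.

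The remaining step is to show that elements of $\ker\phi$ are involutions. Any $g\in\ker\phi$ fixes every pair $\{x,-x\}$ setwise, so $g(x)=\epsilon_g(x)\,x$ for some sign $\epsilon_g(x)\in\{1,-1\}$; because $g$ is a bijection on the two-element set $\{x,-x\}$ one has $\epsilon_g(-x)=\epsilon_g(x)$, and hence $g^2(x)=\epsilon_g(x)^2\, x=x$ for every $x$. Applying this to $g=f^2$ gives $f^4=id$, so $\left|f\right|$ divides~$4$ and thus lies in $\{1,2,4\}$. A permutation of order dividing~$4$ decomposes into disjoint fixed points, transpositions, and $4$-cycles, which is the second assertion.

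I do not foresee a serious obstacle: the argument is purely structural and relies only on facts already established, namely that $\{1,-1\}$ is central with abelian quotient $(\mathbb{Z}_2)^n$ and that all commutators of $Q_n$ are $\pm1$. The only spot deserving a moment's care is the consistency of the sign function $\epsilon_g$, which is immediate from $g$ being a bijection on $\{x,-x\}$.
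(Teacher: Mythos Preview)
Your argument is correct and takes a genuinely different route from the paper's proof. The paper works through the transversal decomposition: it writes $f=L_xh$ with $h\in Inn(Q_n)$, invokes Theorem~\ref{thm:inn-invol} (that $Inn(Q_n)$ has exponent~$2$), determines the companion decomposition $f=kL_y$ with $y=\pm x$, and then computes $f^2$ case by case to land in $\pm Inn(Q_n)$. Your approach instead passes to the central quotient $Q_n/\{1,-1\}\cong(\mathbb{Z}_2)^n$ and observes that every translation $L_a,R_a$ descends to a translation of this abelian group, so the induced action of $Mlt(Q_n)$ has exponent~$2$; hence $f^2$ lies in the kernel, whose elements are visibly sign maps and therefore involutions.

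The comparison: your proof is more self-contained and conceptually cleaner, since it uses only that $-1$ is central and that the quotient is elementary abelian (Theorem~\ref{cor:Zn}), bypassing both the Hamiltonian property and the full strength of Theorem~\ref{thm:inn-invol}. In fact your kernel $\ker\phi$ is exactly the group $N=Inn(Q_n)\times Z(Q_n)$ that appears later in the paper, and your observation that $\ker\phi$ has exponent~$2$ recovers, with less machinery, the part of Theorem~\ref{thm:inn-invol} that the paper's proof actually needs here. The paper's approach, on the other hand, foreshadows the coset calculus $Mlt(Q_n)=\bigcup_x L_x\,Inn(Q_n)$ that drives the semidirect-product description in Section~\ref{sec:mult}, so it fits more organically into the surrounding narrative.
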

\begin{proof}
Denote by $-1$ the translation $L_{-1} = R_{-1} \in Mlt(Q_n)$. Let $f \in Mlt(Q_n)$. Let
$x \in Q_n$ be such that $f(1) = x$. Then there is $h \in Inn(Q_n)$ such that $f = L_{x}h$. If $x = 1$
then $f \in Inn(Q_n)$, and we are done. If $x = -1$ then $f = -h$, $f^{2} = (-h)(-h) = h^{2} =
1$, and we are also done. Assume that $x \neq\pm 1$. We know that $f\in L_x Inn(Q_n)$. There is $y \in Q_n$ such that $f \in Inn(Q_n)L_{y}$, we want to determine $y$. Let $f=L_x h=k L_y$ for some $h,k\in Inn(Q_n)$. Since $x\neq \pm 1$, we have $y\neq \pm 1$. Then $f(-y)=k L_y(-y)=k(1)=1$ and $f(-y)=L_x h(-y)=x (\pm y)$ (since $h(-y)$ is either $y$ or $-y$), so we conclude $y=x$ or $y=-x$. In the former case, we have $f = L_{x}h = kL_{x}$, and
so $f^{2} = kL_{x}L_{x}h = k(-1)h = -kh$, which has order at most two, so $f^{4} = 1$.
In the latter case, we have $f = L_{x}h = kL_{-x}$, and so
$f^{2} = kL_{-x}L_{x}h = kh$, which has order at most two, so $f^{4} = 1$.
\end{proof}

A loop $Q$ is \emph{automorphic} if $Inn(Q)\leq Aut(Q)$. Automorphic loops were introduced by Bruck and Paige~\cite{BruPa:56} and received attention in the recent years, with foundational papers~\cite{JedKinVoj:11},~\cite{KiKuPhVoj:12}. 
\begin{corollary}\label{cor:automorph}
Nonassociative Cayley--Dickson loops are not automorphic.
\end{corollary}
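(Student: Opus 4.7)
The plan is to combine Theorem~\ref{thm:inn-invol} with the centrality of $\pm 1$ to turn ``is $f$ an automorphism?'' into a counting question about homomorphisms $Q_n \to \mathbb{Z}_2$. By Theorem~\ref{thm:inn-invol} every $f \in Inn(Q_n)$ satisfies $f(x) \in \{x,-x\}$ for all $x$, so I can encode $f$ by a sign function $\chi_f \colon Q_n \to \{\pm 1\}$ with $f(x) = \chi_f(x)\,x$. Because $-1 \in Z(Q_n)$ commutes and associates with every element of $Q_n$, one computes
\[
f(x)f(y) \;=\; (\chi_f(x)\,x)(\chi_f(y)\,y) \;=\; \chi_f(x)\chi_f(y)\,xy,
\]
so $f \in Aut(Q_n)$ if and only if $\chi_f$ is a loop homomorphism $Q_n \to \{\pm 1\}$.

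Next I would invoke the elementary fact that a nontrivial homomorphism onto the two-element group has kernel of index $2$: the support $A_f := \{x \in Q_n : \chi_f(x) = -1\}$ must have size either $0$ (the trivial case $f = id$) or $|Q_n|/2 = 2^n$. It therefore suffices to exhibit an element of $Inn(Q_n)$ whose support has a size different from $0$ and $2^n$.

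Lemma~\ref{lemma:Tx} hands one over directly. For $n \geq 3$ we have $|Q_n| = 2^{n+1} \geq 16$, so we may pick $x, y \in Q_n$ with $x, y \neq \pm 1$ and $x \neq \pm y$; then $T_y T_x = (x,-x)(y,-y)$ has $|A_{T_y T_x}| = 4$. Since $n \geq 3$ forces $4 \neq 2^n$ and clearly $4 \neq 0$, $\chi_{T_y T_x}$ fails to be a homomorphism, hence $T_y T_x \in Inn(Q_n) \setminus Aut(Q_n)$ and $Q_n$ is not automorphic. I do not anticipate any real obstacle: the sign-flip form of inner mappings, the centrality of $\pm 1$, and the explicit mapping $T_y T_x$ with its four-element support are all already in hand, so the argument is essentially a size comparison.
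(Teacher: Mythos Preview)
Your argument is correct and in fact cleaner than the paper's. The paper splits into two cases: for $n>3$ it invokes Theorem~\ref{thm:autdirectproduct} to compare $|Inn(Q_n)|=2^{2^n-2}$ against $|Aut(Q_n)|=1344\cdot 2^{n-3}$, while for $n=3$ it runs a separate ad~hoc argument (perturbing a putative automorphism $f\in Inn(Q_n)\cap Aut(Q_n)$ by $T_yT_x$ to produce an inner mapping that agrees with $f$ on the canonical generators but not elsewhere). Your approach instead observes that, because every inner mapping has the sign-flip form $f(x)=\chi_f(x)x$ with $\chi_f(x)\in\{\pm1\}$ and $-1$ is central, $f$ is an automorphism precisely when $\chi_f$ is a loop homomorphism $Q_n\to\{\pm1\}$; then the support of $\chi_f$ must be empty or of size $|Q_n|/2=2^n$, and the explicit element $T_yT_x$ from Lemma~\ref{lemma:Tx} has support of size~$4\notin\{0,2^n\}$. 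This handles all $n\ge 3$ uniformly, avoids any appeal to the structure of $Aut(Q_n)$, and actually yields the sharper statement that the only inner mappings which are automorphisms are those whose sign function is a character of $Q_n$. The paper's route buys nothing extra here; yours is strictly more economical.
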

\begin{proof}
Let $Q_n$ be a Cayley--Dickson loop. For $n\leq 2$, $Q_n$ is a group and hence is automorphic. Note that $\left|Inn(Q_n)\right|=2^{2^n-2}>1344\cdot 2^{n-3}=\left|Aut(Q_n)\right|$ for $n>3$ (see Theorem~\ref{thm:autdirectproduct}). Let $n=3$, and let $i_{1},i_{2},i_{3}$ be canonical generators of $Q_n$. If $Inn(Q_n)\cap Aut(Q_n)=id$, we are done. Otherwise, let $f\in Inn(Q_n) \cap Aut(Q_n)$ be a nontrivial mapping defined by
\[
f(i_{k})=f_{k},\ \ k\in\{1,2,3\}.
\]
For every $x\in Q_n$, $x\notin \pm\{i_{1},i_{2},i_{3}\}$, we know that $x=\prod_{j=1}^3 i^{\epsilon_{j}}_{j}$ (where $\epsilon_{j}\in \{0,1\}$), and since $f$ is an automorphism, $f(x)$ is uniquely defined by
\[
f(x)=f(\prod_{j=1}^3 i^{\epsilon_{j}}_{j})=\prod_{j=1}^3 f(i^{\epsilon_{j}}_{j})=\prod_{j=1}^3 f^{\epsilon_{j}}_{j}.
\]
Let $y\notin \pm\{i_{1},i_{2},i_{3},x\}$. Then by \eqref{eqn:Tx3}, $\tilde{f}=T_{y}T_{x} f$ satisfies
\begin{eqnarray}
\nonumber \tilde{f}\upharpoonright_{\pm\{i_{1},i_{2},i_{3}\}}&=&f,\\
\nonumber \tilde{f}(x)&=&-f(x),
\end{eqnarray}
so $\tilde{f}\in Inn(Q_n)$ but $\tilde{f}\notin Aut(Q_n)$. 
\qedhere
\end{proof}
\section{Multiplication Groups}\label{sec:mult}
In this section we  establish the structure of the multiplication group of $Q_n$. We prove the auxiliary Lemmas~\ref{lemma:small},~\ref{lemma:small_product},~\ref{lemma:new-elt},~\ref{lemma:ind} and use them in the construction of Lemma~\ref{lemma:K} and the proof of Theorem~\ref{thm:semidir}. 
\begin{lemma}\label{lemma:small}
Let $G$ be a finite group, and let $g_{1},g_{2},\ldots,g_{n}$ be elements of $G$ of order $2$ such that $G=\left\langle g_{1},g_{2},\ldots ,g_{n}\right\rangle$. Then
\begin{equation*}\label{eqn:small}
\left|g_{j}g_{k}\right|=2\mbox{ iff }g_{j}g_{k}=g_{k}g_{j}, \ \ j,k\in \{1,\ldots,n\},j\neq k,
\end{equation*}
and if either holds for all $j,k$, then $G$ is an elementary abelian $2$-group.
\end{lemma}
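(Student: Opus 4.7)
The plan is to establish the biconditional by a direct manipulation using the fact that every $g_j$ is its own inverse, and then deduce the structural conclusion from commutativity of generators.

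For the first part, assume without loss of generality that $g_j \neq g_k$ (otherwise $g_j g_k = 1$ has order $1$, and both sides of the biconditional can be handled as a degenerate edge case; distinct generators is the intended reading). Since $g_j^2 = g_k^2 = 1$, the key observation is the chain
\begin{equation*}
g_j g_k = g_k g_j \ \Longleftrightarrow\ g_k g_j g_k g_j = 1 \ \Longleftrightarrow\ (g_j g_k)^{-1} = g_j g_k \ \Longleftrightarrow\ (g_j g_k)^2 = 1.
\end{equation*}
Because $g_j \ne g_k$ and both are involutions, $g_j g_k \neq 1$, so $(g_j g_k)^2 = 1$ is equivalent to $|g_j g_k| = 2$. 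This yields the iff.

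For the second part, assuming the equivalent conditions hold for all $j \ne k$, every pair of generators commutes. Since any element of $G$ is a word in the $g_i$, and adjacent letters can be swapped freely, $G$ is abelian. Now for any $h = g_{i_1} g_{i_2} \cdots g_{i_m}$, by commutativity we may reorder and collect like terms; using $g_i^2 = 1$ repeatedly reduces $h$ to a product in which each generator appears at most once, and moreover
\begin{equation*}
h^2 = g_{i_1}^2 g_{i_2}^2 \cdots g_{i_m}^2 = 1.
\end{equation*}
Thus $G$ has exponent $2$, and an abelian group of exponent $2$ is elementary abelian.

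There is no real obstacle here; the only subtlety is making sure the biconditional is stated with the implicit understanding that the $g_j$'s are regarded as distinct generators (so that $g_j g_k \ne 1$ can be used to upgrade $(g_j g_k)^2 = 1$ to $|g_j g_k| = 2$). The rest is a one-line calculation followed by the standard fact that a group generated by commuting involutions is elementary abelian.
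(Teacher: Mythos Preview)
Your proof is correct and follows essentially the same approach as the paper: both arguments reduce the biconditional to the identity $(g_jg_k)(g_kg_j)=g_jg_k^2g_j=1$ coming from $g_j^2=g_k^2=1$, and then invoke the standard fact that a group generated by pairwise commuting involutions is elementary abelian. You are in fact slightly more careful than the paper in flagging the degenerate case $g_j=g_k$ (where $|g_jg_k|=1$), which the paper's proof tacitly excludes.
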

\begin{proof}
Suppose $\left|g_{j}g_{k}\right|=2$, then $(g_{j}g_{k})(g_{k}g_{j})=g_{j}g_{k}^2g_{j}=g_{j}^2=1=(g_{j}g_{k})(g_{j}g_{k})$, and hence $g_{k}g_{j}=g_{j}g_{k}$. If $g_{k}g_{j}=g_{j}g_{k}$, then $(g_{j}g_{k})^2=(g_{j}g_{k})(g_{j}g_{k})=(g_{j}g_{k})(g_{k}g_{j})=g_{j}g^2_{k}g_{j}=g^2_{j}=1$, and $\left|g_{j}g_{k}\right|=2$. If $g_{k}g_{j}=g_{j}g_{k}$ for all $j,k\in \{1,\ldots,n\},j\neq k$, it is straightforward to check that $G$ is an elementary abelian $2$-group.
\end{proof}
\begin{lemma}\label{lemma:small_product}
Let $Q_n$ be a Cayley--Dickson loop, $i_{j},i_{k}$ among its canonical generators, and $x\in Q_n$. Let 
\begin{eqnarray*}
\nonumber p_{j,k}(x)&=&L_{i_{j}}\upharpoonright_{\pm\{x,i_{j}x,i_{k}x,i_{j}(i_{k}x)\}}\\
				&=&(x, i_{j}x, -x, -i_{j}x)(i_{k}x, i_{j}(i_{k}x), -i_{k}x, -i_{j}(i_{k}x)),\\
q_{j,k}(x)&=&T_{i_{k}x}T_{x}p_{j,k}(x),\\
M_{j,k,x,1}&=&\{T_{i_{j}x}T_{x},T_{i_{j}(i_{k}x)}T_{i_{k}x}\},\\
M_{j,k,x,-1}&=&\{T_{i_{j}(i_{k}x)}T_{x},T_{i_{k}x}T_{i_{j}x}\}.
\end{eqnarray*}
Then $\left|q_{j,k}(x)\right|=\left|t p_{k,j}(x)\right|=\left|q_{j,k}(x)(t p_{k,j}(x))\right|=2$, where $t\in M_{j,k,x,s}$, and $s\in Z(Q_n)$ satisfies $i_{j}(i_{k}x)=s(i_{k}(i_{j}x))$.
\end{lemma}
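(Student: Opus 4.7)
My plan is to reduce the entire lemma to explicit permutation computations on the eight-element ``active set'' $S=\pm\{x,i_jx,i_kx,i_j(i_kx)\}$, assuming the generic case in which $j\ne k$ and these eight elements are distinct (degenerate cases collapse and can be handled separately). Setting $a=x$, $b=i_jx$, $c=i_kx$, $d=i_j(i_kx)$, the defining relation of $s$ gives $i_k(i_jx)=sd$. All three permutations in the statement will act trivially outside $S$, so the order claims can be read off from the action on $S$.

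Using diassociativity inside the groups $\langle i_j,x\rangle$, $\langle i_j,i_kx\rangle$, $\langle i_k,x\rangle$, $\langle i_k,i_jx\rangle$ (each at most a quaternion group), I would first read off the cycle factorisations
\[
p_{j,k}(x)=(a,b,-a,-b)(c,d,-c,-d),\qquad p_{k,j}(x)=(a,c,-a,-c)(b,sd,-b,-sd),
\]
and apply Lemma~\ref{lemma:Tx} to identify $T_{i_kx}T_x=(a,-a)(c,-c)$, together with the four possible forms of $t\in M_{j,k,x,\pm1}$ as products of two disjoint $(y,-y)$ transpositions supported on two of the four elements $\{a,b,c,d\}$. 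Multiplying $T_{i_kx}T_x$ into $p_{j,k}(x)$ collapses each four-cycle into two disjoint transpositions and yields $q_{j,k}(x)=(a,b)(-a,-b)(c,d)(-c,-d)$, visibly an involution.

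A short case analysis over the four $(s,t)$ combinations then shows that $t\,p_{k,j}(x)$ always reduces to one of the two involutions $(a,c)(-a,-c)(b,d)(-b,-d)$ or $(a,-c)(-a,c)(b,-d)(-b,d)$. Composing $q_{j,k}(x)$ with either of them produces $(a,d)(-a,-d)(b,c)(-b,-c)$ or $(a,-d)(-a,d)(b,-c)(-b,c)$, again a product of four disjoint transpositions, so $|q_{j,k}(x)(t\,p_{k,j}(x))|=2$ in all cases. This establishes all three order-two claims.

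The main obstacle is the sign bookkeeping. The sets $M_{j,k,x,s}$ are engineered precisely so that in every $(s,t)$ case the sign twist inside the second factor of $p_{k,j}(x)$ is compatible with the $(y,-y)$ support of $t$, forcing the composition $t\,p_{k,j}(x)$ to collapse into disjoint transpositions rather than producing a $4$-cycle. Making this compatibility transparent, so as to avoid an eight-way cycle-by-cycle verification, is essentially the whole conceptual content of the lemma; I would present it as a small ``sign-matching'' table indexed by $s\in\{1,-1\}$ and the two elements of $M_{j,k,x,s}$.
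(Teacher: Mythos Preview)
Your proposal is correct and follows essentially the same route as the paper: both arguments reduce to explicit cycle computations on the eight-element set $\pm\{x,i_jx,i_kx,i_j(i_kx)\}$, split into the two cases $s=\pm1$, and read off that every resulting permutation is a product of disjoint transpositions. Your abbreviation $a,b,c,d$ streamlines the bookkeeping, but the logic, the case split, and the resulting involutions match the paper's proof line by line.
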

\begin{proof}
We write down the corresponding permutations and check that they only contain involutions. Using Lemma~\ref{lemma:Tx},
\[q_{j,k}(x)=T_{i_{k}x}T_{x}p_{j,k}=(x, i_{j}x)(-x, -i_{j}x)(i_{k}x,  i_{j}(i_{k}x))(-i_{k}x, -i_{j}(i_{k}x)),\]
hence $\left|q_{j,k}(x)\right|=2$.\\
Let $s$ be an element of $Q_n$ such that $i_{j}(i_{k}x)=s(i_{k}(i_{j}x))$. Note that $s\in Z(Q_n)$ as a product of commutators and associators, therefore $s\in \{1,-1\}$.\\
If $s=1$ and $i_{j}(i_{k}x)=i_{k}(i_{j}x)$, then 
\begin{eqnarray}
\nonumber p_{k,j}(x)&=&(x, i_{k}x, -x, -i_{k}x)(i_{j}x, i_{j}(i_{k}x), -i_{j}x, -i_{j}(i_{k}x)),\\
\nonumber T_{i_{j}x}T_{x}p_{k,j}(x)&=&(x, i_{k}x)(-x, -i_{k}x)(i_{j}x, i_{j}(i_{k}x))(-i_{j}x, -i_{j}(i_{k}x)),\\
\nonumber T_{i_{j}(i_{k}x)}T_{i_{k}x}p_{k,j}(x)&=&(x, -i_{k}x)(-x, i_{k}x)(i_{j}x, -i_{j}(i_{k}x))(-i_{j}x, i_{j}(i_{k}x)).
\end{eqnarray}
In this case,
\begin{eqnarray}
\nonumber q_{j,k}(x)\cdot (T_{i_{j}x}T_{x}p_{k,j}(x))&=&(x, i_{j}(i_{k}x))(-x, -i_{j}(i_{k}x))(i_{j}x, i_{k}x)(-i_{j}x, -i_{k}x),\\
\nonumber q_{j,k}(x)\cdot (T_{i_{j}(i_{k}x)}T_{i_{k}x}p_{k,j}(x))&=&(x, -i_{j}(i_{k}x))(-x, i_{j}(i_{k}x))(i_{j}x, -i_{k}x)(-i_{j}x, i_{k}x).
\end{eqnarray}
One can see that $\left|t p_{k,j}(x)\right|=\left|q_{j,k}(x)(t p_{k,j}(x))\right|=2$, where $t\in \{T_{i_{j}x}T_{x},T_{i_{j}(i_{k}x)}T_{i_{k}x}\}$.\\
Similarly, if $s=-1$ and $i_{j}(i_{k}x)=-i_{k}(i_{j}x)$, then 
\begin{eqnarray}
\nonumber p_{k,j}(x)&=&(x, i_{k}x, -x, -i_{k}x)(i_{j}x, -i_{j}(i_{k}x), -i_{j}x, i_{j}(i_{k}x)),\\
\nonumber T_{i_{j}(i_{k}x)}T_{x}p_{k,j}(x)&=&(x, i_{k}x)(-x, -i_{k}x)(i_{j}x, i_{j}(i_{k}x))(-i_{j}x, -i_{j}(i_{k}x)),\\
\nonumber T_{i_{k}x}T_{i_{j}x}p_{k,j}(x)&=&(x, -i_{k}x)(-x, i_{k}x)(i_{j}x, -i_{j}(i_{k}x))(-i_{j}x, i_{j}(i_{k}x)).
\end{eqnarray}
In this case,
\begin{eqnarray}
\nonumber q_{j,k}(x)\cdot (T_{i_{j}(i_{k}x)}T_xp_{k,j}(x))&=&(x, i_{j}(i_{k}x))(-x, -i_{j}(i_{k}x))(i_{j}x, i_{k}x)(i_{j}x, i_{k}x),\\
\nonumber q_{j,k}(x)\cdot (T_{i_{k}x}T_{i_{j}x}p_{k,j}(x))&=&(x, -i_{j}(i_{k}x))(-x, i_{j}(i_{k}x))(i_{j}x, -i_{k}x)(-i_{j}x, i_{k}x).
\end{eqnarray}
Again, $\left|t p_{k,j}(x)\right|=\left|q_{j,k}(x)(t p_{k,j}(x))\right|=2$, where $t\in \{T_{i_{j}(i_{k}x)}T_{x},T_{i_{k}x}T_{i_{j}x}\}$.
\end{proof}
We use the following property to prove Lemmas~\ref{lemma:new-elt} and~\ref{lemma:ind}.
\begin{lemma}\label{lemma:new-elt}
Let $Q_n$ be a Cayley--Dickson loop, and let $i_{1},i_{2},\ldots,i_{n}$ be its canonical generators. Then $i_{k}(i_{n}x)=-i_{n}(i_{k}x)$ for any $x\in \left\langle i_{1},i_{2},\ldots,i_{n-1}\right\rangle$, $k<n$.
\end{lemma}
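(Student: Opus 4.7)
The plan is to verify the identity by direct computation using the Cayley--Dickson multiplication rules, exploiting the fact that both $x$ and $i_k$ (for $k<n$) lie in the subloop $Q_{n-1}=\langle i_1,\ldots,i_{n-1}\rangle$, which is embedded in $Q_n$ via $y\mapsto(y,0)$, while $i_n=(1_{Q_{n-1}},1)$. Thus $x=(x,0)$, $i_k=(i_k,0)$, and $i_n=(1,1)$ as elements of $Q_n$, and every product we need is covered by exactly one of the four displayed multiplication rules of the doubling process.

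First I would compute $i_n x$ and $i_k(i_n x)$. The rule $(a,1)(b,0)=(ab^{\ast},1)$ gives $i_n x=(x^{\ast},1)$, and then the rule $(a,0)(b,1)=(ba,1)$ gives
\[
i_k(i_n x)=(i_k,0)(x^{\ast},1)=(x^{\ast}i_k,1).
\]
On the other side, $i_k x=(i_k x,0)$ lies in $Q_{n-1}$, so
\[
i_n(i_k x)=(1,1)(i_k x,0)=\bigl((i_k x)^{\ast},1\bigr).
\]
Now I would simplify $(i_k x)^{\ast}$ inside $Q_{n-1}$ using the anti-automorphic inverse property: since $Q_{n-1}$ is diassociative (Culbert's theorem cited above), it is an inverse property loop, and inverses coincide with conjugates by Proposition~\ref{prop:prop}, so $(i_k x)^{\ast}=x^{\ast}i_k^{\ast}$. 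Because $k\geq 1$ the generator $i_k\neq\pm1$, hence $i_k^{\ast}=-i_k$, and the centrality of $-1$ yields $x^{\ast}(-i_k)=-(x^{\ast}i_k)$. Therefore $i_n(i_k x)=(-x^{\ast}i_k,1)=-(x^{\ast}i_k,1)=-i_k(i_n x)$, as desired.

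There is essentially no serious obstacle; the only point that needs care is the legitimacy of rewriting $(i_k x)^{\ast}=x^{\ast}i_k^{\ast}$, which is exactly where diassociativity of $Q_{n-1}$ (and hence the anti-automorphic inverse property) enters, and the book-keeping of keeping $-1$ in the correct position, handled by its centrality. A purely loop-theoretic alternative would be to expand both sides via the commutator $[i_k,i_n]=-1$ (since $\langle i_k,i_n\rangle\cong\mathbb{H}_8$) and the associators $[i_k,i_n,x]$, $[i_n,i_k,x]$, which are equal by Moufangness of $\langle i_k,i_n,x\rangle\cong\mathbb{O}_{16}$ when $x\notin\langle i_k,i_n\rangle$ (Lemma~\ref{lemma:aut2}) and trivially equal otherwise; but the direct coordinate computation above is shorter and avoids splitting into cases.
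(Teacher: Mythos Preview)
Your proof is correct and takes a genuinely different route from the paper. The paper expands $i_k(i_nx)$ through the associator/commutator chain
\[
i_k(i_nx)=[i_k,i_n][i_k,i_n,x][i_n,i_k,x]\,i_n(i_kx),
\]
uses $[i_k,i_n]=-1$ (since $\langle i_k,i_n\rangle\cong\mathbb H_8$), and then splits into the two cases $x\in\langle i_k,i_n\rangle$ and $x\notin\langle i_k,i_n\rangle$, invoking Lemma~\ref{lemma:aut2} to see that in the latter case both associators equal $-1$; either way the product of signs is $-1$. Your argument bypasses all of this by working directly with the doubling formulas: writing $i_n=(1,1)$, $i_k=(i_k,0)$, $x=(x,0)$ reduces both sides to $(x^{\ast}i_k,1)$ and $((i_kx)^{\ast},1)$, and the anti-automorphic inverse property together with $i_k^{\ast}=-i_k$ finishes it without any case distinction. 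Your approach is shorter and more elementary; the paper's approach has the virtue of staying within the commutator/associator formalism used throughout the section (and indeed your final remark sketches exactly the paper's argument).
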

\begin{proof}
Let $x\in \left\langle i_{1},i_{2},\ldots,i_{n-1}\right\rangle$. Then
\begin{eqnarray}
\nonumber i_{k}(i_{n}x)&=&[i_{k},i_{n},x](i_{k}i_{n})x=[i_{k},i_{n}][i_{k},i_{n},x](i_{n}i_{k})x\\
\nonumber &=&[i_{k},i_{n}][i_{k},i_{n},x][i_{n},i_{k},x]i_{n}(i_{k}x).
\end{eqnarray}
Recall that $\left\langle x,y,i_{n}\right\rangle\leq \mathbb{O}_{16}$ for any $x,y \in Q_n$, by Lemma~\ref{lemma:aut2}, and $\left\langle x,y,i_{n}\right\rangle\cong \mathbb{O}_{16}$ implies that $[x,y,i_{n}]=-1$. Also, $[i_{k},i_{n}]=-1$ as $\left\langle i_{k},i_{n}\right\rangle\cong\mathbb{H}_{8}$. This leads to 
\begin{equation*}
[i_{k},i_{n}][i_{k},i_{n},x][i_{n},i_{k},x]=
\begin{cases} 
-1\cdot 1\cdot 1=-1,& \mbox{ if }x\in\left\langle i_{k},i_{n}\right\rangle,\\
-1\cdot (-1)\cdot (-1)=-1,& \mbox{ otherwise.}
\end{cases} 
\end{equation*}
We conclude that
\begin{equation*}
i_{k}(i_{n}x)=-i_{n}(i_{k}x).
\qedhere
\end{equation*}
\end{proof}
\begin{lemma}\label{lemma:ind}
Let $Q_n$ be a Cayley--Dickson loop, and let $i_{1},i_{2},\ldots,i_{n}$ be its canonical generators. For any $x\in \left\langle i_{1},i_{2},\ldots,i_{n-1}\right\rangle$, $j<n,k<n,j\neq k$, if $i_{j}(i_{k}x)=s(i_{k}(i_{j}x))$, then $i_{j}(i_{k}(xi_{n}))=s(i_{k}(i_{j}(xi_{n})))$ (where $s\in Z(Q_n)$).   
\end{lemma}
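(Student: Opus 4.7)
The plan is to expand $i_j(i_k(xi_n))$ and $i_k(i_j(xi_n))$ by peeling off the trailing factor $i_n$ using the associator, and to reduce the claim to a single four-fold identity among associators of the form $[\,\cdot\,,\cdot\,,i_n]$.

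Using the defining identity $a(bc)=[a,b,c](ab)c$ (which holds since associators lie in $Z(Q_n)=\{1,-1\}$), I will apply this twice to obtain
\begin{equation*}
i_k(xi_n)=[i_k,x,i_n]\,(i_kx)i_n,\qquad i_j\bigl((i_kx)i_n\bigr)=[i_j,i_kx,i_n]\,(i_j(i_kx))i_n,
\end{equation*}
and hence, again by centrality of associators,
\begin{equation*}
i_j(i_k(xi_n))=[i_k,x,i_n]\,[i_j,i_kx,i_n]\,(i_j(i_kx))i_n.
\end{equation*}
Swapping the roles of $j$ and $k$ gives the analogous formula for $i_k(i_j(xi_n))$. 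Substituting the hypothesis $i_j(i_kx)=s\,i_k(i_jx)$ and using $s\in Z(Q_n)$, the desired equality $i_j(i_k(xi_n))=s\,i_k(i_j(xi_n))$ becomes equivalent to the associator identity
\begin{equation*}
[i_k,x,i_n]\,[i_j,i_kx,i_n]\,[i_j,x,i_n]\,[i_k,i_jx,i_n]=1.
\end{equation*}

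To establish this identity, I will use Lemma~\ref{lemma:aut2} together with Lemma~\ref{lemma:comm-assoc}: for any $y\in\langle i_1,\ldots,i_{n-1}\rangle$ and $\ell<n$, we have $[i_\ell,y,i_n]=-1$ iff $\langle i_\ell,y\rangle\cong\mathbb{H}_{8}$, iff $y\notin\pm\{1,i_\ell\}$. Translating each of the four factors above into a membership condition on $x$, and using $i_k^{-1}=-i_k$ together with the anticommutativity $i_ji_k=-i_ki_j$ in $\langle i_j,i_k\rangle\cong\mathbb{H}_{8}$, the four ``good'' subsets of $\langle i_j,i_k\rangle$ on which the respective factors equal $+1$ come out to be $\langle i_k\rangle$, $i_k\langle i_j\rangle$, $\langle i_j\rangle$, and $i_j\langle i_k\rangle$.

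The verification then splits into two easy cases. If $x\notin\langle i_j,i_k\rangle$, then $x$ belongs to none of the four good sets, so all four associators equal $-1$, and the product is $(-1)^4=1$. If $x\in\langle i_j,i_k\rangle=\pm\{1,i_j,i_k,i_ji_k\}$, a direct inspection shows that each of the four cases $x\in\pm\{1\},\pm\{i_j\},\pm\{i_k\},\pm\{i_ji_k\}$ lies in exactly two of the four good sets; hence exactly two factors equal $+1$ and two equal $-1$, again giving product $+1$. The only subtlety is the bookkeeping translation of each condition ``$i_\ell x\in\pm\{1,i_m\}$'' into a condition on $x$ itself via the multiplication table of $\mathbb{H}_{8}$; once that is done, the claim follows immediately.
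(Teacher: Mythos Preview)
Your proof is correct and follows essentially the same approach as the paper: both expand $i_j(i_k(xi_n))$ via associators to reduce the claim to the identity $[i_k,x,i_n][i_j,i_kx,i_n][i_j,x,i_n][i_k,i_jx,i_n]=1$, and both verify it by the same case analysis on whether $x$ lies in $\pm\{1\},\pm\{i_j\},\pm\{i_k\},\pm\{i_ji_k\}$ or outside $\langle i_j,i_k\rangle$, using Lemma~\ref{lemma:aut2} and Lemma~\ref{lemma:comm-assoc}. Your phrasing in terms of ``good sets'' is a slightly cleaner bookkeeping of the same five-case check the paper writes out with explicit signs.
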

\begin{proof}
Let $x\in \left\langle i_{1},i_{2},\ldots,i_{n-1}\right\rangle$, and let $s\in Z(Q_n)$ be such that $i_{j}(i_{k}x)=s(i_{k}(i_{j}x))$. Then 
\begin{eqnarray}
\nonumber i_{j}(i_{k}(xi_{n})) &=& [i_{k},x,i_{n}]i_{j}((i_{k}x)i_{n})= [i_{k},x,i_{n}][i_{j},i_{k}x,i_{n}](i_{j}(i_{k}x))i_{n}\\
\nonumber &=&[i_{k},x,i_{n}][i_{j},i_{k}x,i_{n}]s((i_{k}(i_{j}x))i_{n})\\
\nonumber &=&[i_{k},x,i_{n}][i_{j},i_{k}x,i_{n}][i_{k},i_{j}x,i_{n}]s i_{k}((i_{j}x)i_{n})\\
\nonumber &=&[i_{k},x,i_{n}][i_{j},i_{k}x,i_{n}][i_{k},i_{j}x,i_{n}][i_{j},x,i_{n}]s i_{k}(i_{j}(xi_{n})).
\end{eqnarray}
Recall that $\left\langle x,y,i_{n}\right\rangle\leq \mathbb{O}_{16}$ for any $x,y \in Q_n$, by Lemma~\ref{lemma:aut2}, and $\left\langle x,y,i_{n}\right\rangle\cong \mathbb{O}_{16}$ implies that $[x,y,i_{n}]=-1$, which leads to 
\begin{equation*}
[i_{k},x,i_{n}][i_{j},i_{k}x,i_{n}][i_{k},i_{j}x,i_{n}][i_{j},x,i_{n}]=
\begin{cases} 
1\cdot (-1)\cdot (-1)\cdot 1=1,& \mbox{ if }x=\pm 1,\\
-1\cdot (-1)\cdot 1\cdot 1=1,& \mbox{ if }x=\pm i_{j},\\
1\cdot 1\cdot (-1)\cdot (-1)=1,& \mbox{ if }x=\pm i_{k},\\
-1\cdot 1\cdot 1\cdot (-1)=1,& \mbox{ if }x=\pm i_{j}i_{k},\\
-1\cdot (-1)\cdot (-1)\cdot (-1)=1& \mbox{ otherwise.}
\end{cases} 
\end{equation*}
We conclude that
\begin{equation*}
i_{j}(i_{k}(xi_{n})) = s(i_{k}(i_{j}(xi_{n}))).
\qedhere
\end{equation*}
\end{proof}
%
%

In Lemma~\ref{lemma:K} we present a construction of the subgroup $K$ of $Mlt(Q_n)$ which is used in Theorem~\ref{thm:semidir} to establish that $Mlt(Q_n)\cong (Inn(Q_n)\times Z(Q_n))\rtimes K$. For every $x\in Q_n/ \{1,-1\}$, we want $K$ to contain the element $k_x$ such that $k_x(1)\in x$. This holds when $K$ is generated by $\{L_{i_k}\left|\right. i_k\mbox{ a canonical generator of }Q_n\}$. We also want $K$ to be sufficiently small to allow $(Inn(Q_n)\times Z(Q_n))\cap K=id$. To achieve this, we should adjust the left translations $L_{i_k}$ so that they generate a group as small as needed. This is done by multiplying $L_{i_k}$ by $\psi_k\in Inn(Q_n)$ such that $\left|\psi_k L_{i_k}\right|=\left|\psi_j L_{i_j}\right|=\left|(\psi_k L_{i_k})\cdot (\psi_j L_{i_j})\right|=2$ for all $j,k\leq n, j\neq k$. 
Consider a group $\mathbb{H}_8$, where left translations by canonical generators are
\begin{eqnarray}
 \nonumber L_{i_1}&=&(1, i_1, -1, -i_1)(i_2, i_1i_2, -i_2, -i_1i_2),\\
 \nonumber L_{i_2}&=&(1, i_2, -1, -i_2)(i_1, i_2i_1, -i_1, -i_2i_1)=(1, i_2, -1, -i_2)(i_1, -i_1i_2, -i_1, i_1i_2).
\end{eqnarray}
For an inner mapping $\psi_1\in Inn(\mathbb{H}_8)$ such that $\left|\psi_1 L_{i_1}\right|=2$ we can either take $T_{i_{2}}$, or $T_{i_{1}i_{2}}$ (one can check that $\left|T_{i_{1}}L_{i_1}\right|=4$), 
\begin{eqnarray}
 \nonumber T_{i_{2}}L_{i_1}&=&(1, -i_1)(-1, i_1)(i_2, -i_1i_2)(-i_2, i_1i_2),\\
 \nonumber T_{i_{1}i_{2}}L_{i_1}&=&(1, -i_1)(-1, i_1)(i_2, i_1i_2)(-i_2, -i_1i_2).
\end{eqnarray}
Similarly, for an inner mapping $\psi_2\in Inn(\mathbb{H}_8)$ such that $\left|\psi_2 L_{i_2}\right|=2$ we can either take $T_{i_{1}}$, or $T_{i_{1}i_{2}}$, 
\begin{eqnarray}
 \nonumber T_{i_{1}}L_{i_2}&=&(1, -i_2)(-1, i_2)(i_1, i_1i_2)(-i_1, -i_1i_2),\\
 \nonumber T_{i_{1}i_{2}}L_{i_2}&=&(1, -i_2)(-1, i_2)(i_1, -i_1i_2)(-i_1, i_1i_2).
\end{eqnarray}
For a pair of mappings $\psi_1, \psi_2$ such that $\left|(\psi_1 L_{i_1})\cdot (\psi_2 L_{i_2})\right|=2$ we can either take
$\psi_1=T_{i_{2}}, \psi_2=T_{i_{1}i_{2}}$, or $\psi_1=T_{i_{1}i_{2}}, \psi_2=T_{i_{1}}$,
\begin{eqnarray}
 \nonumber (T_{i_{2}}L_{i_1})\cdot(T_{i_{1}i_{2}}L_{i_2})&=&(1, i_1i_2)(-1, -i_1i_2)(i_1, i_2)(-i_1, -i_2),\\
 \nonumber (T_{i_{1}i_{2}}L_{i_1})\cdot(T_{i_{1}}L_{i_2})&=&(1, -i_1i_2)(-1, i_1i_2)(i_1, i_2)(-i_1, -i_2).
\end{eqnarray}
Without loss of generality, we choose $\psi_1=T_{i_{2}}, \psi_2=T_{i_{1}i_{2}}$, and $K_2=\left\langle g_{1,2},g_{2,2}\right\rangle=\left\langle T_{i_{2}}L_{i_1},T_{i_{1}i_{2}}L_{i_2}\right\rangle$. The group $K_2$ is not unique, and this particular choice allows to generalize the construction for higher dimensions. The group we present in Lemma~\ref{lemma:K} is based on this choice and suffices to establish the structure of $Mlt(Q_n)$. Note that the structure of $Mlt(\mathbb{H}_8)$ is known (see Lemma~\ref{rmk:mlt_g}), so the construction of $K$ for $\mathbb{H}_8$ is only used as an initial step of the inductive construction for $Q_n$. 

Next, consider $\mathbb{O}_{16}$. We want to construct $K_3$ based on $K_2$ by extending the generators of $K_2$ to form the elements of $K_3$, and including one more generator based on $L_{i_3}$. By Lemma~\ref{lemma:comm-assoc}, we have
\begin{eqnarray}
 \nonumber i_1(i_2i_3)&=&-(i_1i_2)i_3,\\
 \nonumber i_2(i_1i_3)&=&-(i_2i_1)i_3=(i_1i_2)i_3, \\
 \nonumber i_3(i_1i_2)&=&-(i_1i_2)i_3,
\end{eqnarray}
hence
\begin{eqnarray}
 \nonumber L_{i_1}&=&(1, i_1, -1, -i_1)(i_2, i_1i_2, -i_2, -i_1i_2)(i_3, i_1i_3, -i_3, -i_1i_3)(i_2i_3, -(i_1i_2)i_3, -i_2i_3, (i_1i_2)i_3),\\ 
 \nonumber L_{i_2}&=&(1, i_2, -1, -i_2)(i_1, -i_1i_2, -i_1, i_1i_2)(i_3, i_2i_3, -i_3, -i_2i_3)(i_1i_3, (i_1i_2)i_3, -i_1i_3, -(i_1i_2)i_3),\\ 
 \nonumber L_{i_3}&=&(1, i_3, -1, -i_3)(i_1, -i_1i_3, -i_1, i_1i_3)(i_2, -i_2i_3, -i_2, i_2i_3)(i_1i_2, -(i_1i_2)i_3, -i_1i_2, (i_1i_2)i_3).
\end{eqnarray}
For every cycle $(x, i_kx, -x, -i_kx)$ we want $\psi_k$ to include either $T_x$, or $T_{i_kx}$ (but not both), so that the cycle becomes a product of two $2$-cycles, either $(x, i_kx)(-x, -i_kx)$, or $(x, -i_kx)(-x, i_kx)$. Note that a product of an odd number of mappings $T_{x_1}T_{x_2}T_{x_3}$ (where $x_1,x_2,x_3\in \mathbb{O}_{16}$) fixes $\pm\{1,x_1,x_2,x_3\}$ and moves all other elements (see Lemma~\ref{lemma:Tx}). Taking $\psi_1=T_{i_{2}}T_{i_{3}}T_{i_{2}i_{3}}, \psi_2=T_{i_{1}i_{2}}T_{i_{3}}T_{i_{1}i_{2}i_{3}}$, we get
\begin{eqnarray}
 \nonumber g_{1,3}&=&T_{i_{2}}T_{i_{3}}T_{i_{2}i_{3}}L_{i_1}\\
 \nonumber &=&(1, -i_1)(-1, i_1)(i_2, -i_1i_2)(-i_2, i_1i_2)\\
 \nonumber &&(i_3, -i_1i_3)(-i_3, i_1i_3)(i_2i_3, (i_1i_2)i_3)(-i_2i_3, -(i_1i_2)i_3),\\
 \nonumber g_{2,3}&=&T_{i_{1}i_{2}}T_{i_{3}}T_{i_{1}i_{2}i_{3}}L_{i_2}\\
 \nonumber &=&(1, -i_2)(-1, i_2)(i_1, -i_1i_2)(-i_1, i_1i_2)\\
 \nonumber &&(i_3, -i_2i_3)(-i_3, i_2i_3)(i_1i_3, (i_1i_2)i_3)(-i_1i_3, -(i_1i_2)i_3),\\ 
 \nonumber g_{1,3}g_{2,3} &=& (1, i_1i_2)(-1, -i_1i_2)(i_1, i_2)(-i_1, -i_2)\\
 \nonumber &&(i_3, -(i_1i_2)i_3)(-i_3, (i_1i_2)i_3)(i_1i_3, i_2i_3)(-i_1i_3, -i_2i_3).
\end{eqnarray}
Again, this is one of several possible choices of $g_{1,3}, g_{2,3}$. Finally, we need to add a generator $g_{3,3}$ such that $\left|g_{3,3}\right|=\left|g_{1,3}g_{3,3}\right|=\left|g_{2,3}g_{3,3}\right|=2$, one can choose, for example,
\begin{eqnarray}
 \nonumber g_{3,3}&=&T_{i_1i_2}T_{i_1i_3}T_{i_2i_3}L_{i_3}\\
 \nonumber &=&(1, -i_3)(-1, i_3)(i_1, -i_1i_3)(-i_1, i_1i_3)\\
 \nonumber &&(i_2, -i_2i_3)(-i_2, i_2i_3)(i_1i_2, (i_1i_2)i_3)(-i_1i_2, -(i_1i_2)i_3),
\end{eqnarray}
which results in 
\begin{eqnarray}
 \nonumber g_{1,3}g_{3,3}&=&(1, i_1i_3)(-1, -i_1i_3)(i_1, i_3)(-i_1, -i_3)\\
 \nonumber &&(i_2, -(i_1i_2)i_3)(-i_2, (i_1i_2)i_3)(i_1i_2, i_2i_3)(-i_1i_2, -i_2i_3),\\
 \nonumber g_{2,3}g_{3,3}&=&(1, i_2i_3)(-1, -i_2i_3)(i_2, i_3)(-i_2, -i_3)\\
 \nonumber &&(i_1, -(i_1i_2)i_3)(-i_1, (i_1i_2)i_3)(i_1i_2, i_1i_3)(-i_1i_2, -i_1i_3).
\end{eqnarray}

Below is the description of the construction for $Q_n$.
\begin{lemma}\label{lemma:K}
Let $i_{1},i_{2},\ldots,i_{n}$ be canonical generators of a Cayley--Dickson loop $Q_n$, and let $K$ be the group constructed inductively as follows
\begin{eqnarray}
\nonumber s_{1,2}&=&\{1,i_{2}\},\ \ s_{2,2}=\{1,i_{1}i_{2}\},\\
\nonumber g_{1,2}&=&(\prod_{x\in s_{1,2}} T_x)L_{i_{1}},\\
\nonumber g_{2,2}&=&(\prod_{x\in s_{2,2}} T_x)L_{i_{2}},\\
\nonumber K_{2}&=&\left\langle g_{1,2},g_{2,2}\right\rangle,\\
\nonumber s_{k,n}&=&\{x,i_{n}x\left|\right.x\in s_{k,n-1}\},\ \ k\in\{1,\ldots,n-1\},\\
\nonumber s_{n,n}&=&\left\{\prod_{j=1}^{n}i_{j}^{p_{j}}\left|\right.p_{j}\in\{0,1\},\sum_{j=1}^{n}p_{j}\in 2\mathbb{Z}\right\},\\
\nonumber g_{k,n}&=&(\prod_{x\in s_{k,n}} T_x)L_{i_{k}}=(\prod_{x\notin s_{k,n}} (x,-x))L_{i_{k}},\ \ k\in\{1,\ldots,n\},\\
\nonumber K&=&K_{n}=\left\langle g_{1,n},g_{2,n},\ldots,g_{n,n}\right\rangle.
\end{eqnarray}
Then $K$ is an elementary abelian $2$-group of order $2^n$.
\end{lemma}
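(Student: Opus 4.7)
My plan is to apply Lemma~\ref{lemma:small} and argue by induction on~$n$. It suffices to prove that each generator $g_{k,n}$ is an involution and that any two distinct generators $g_{j,n}$, $g_{k,n}$ satisfy $|g_{j,n}g_{k,n}|=2$, because then Lemma~\ref{lemma:small} forces $K_n$ to be an elementary abelian $2$-group with $|K_n|\le 2^n$, and a separate lower bound via the quotient $Q_n/Z(Q_n)$ will give equality. The base case $n=2$ is exactly the explicit calculation displayed in the paragraphs before the lemma, where $g_{1,2}$, $g_{2,2}$ and their product are written as products of disjoint transpositions.

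For the inductive step I would first show each $g_{k,n}$ is an involution by tracking signs. From \eqref{gen1}, $T_x$ acts on $z\in Q_n$ as the scalar $[x,z]\in\{\pm1\}$, so $\prod_{x\in s_{k,n}}T_x$ acts on $z$ by $\delta_{k,n}(z)=\prod_{x\in s_{k,n}}[x,z]$, and hence $g_{k,n}\colon y\mapsto\delta_{k,n}(i_ky)\cdot i_ky$. Squaring gives $g_{k,n}^2(y)=-\delta_{k,n}(i_ky)\delta_{k,n}(y)\,y$, so $g_{k,n}^2=id$ reduces to the identity $\delta_{k,n}(i_ky)=-\delta_{k,n}(y)$ for every $y$. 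Using that the commutator descends to a bilinear form on $Q_n/Z(Q_n)\cong(\mathbb{Z}_2)^n$ (a consequence of Lemma~\ref{lemma:comm-assoc} and Theorem~\ref{cor:Zn}), this becomes a parity condition on $s_{k,n}$, which I would verify by induction using the recursion $s_{k,n}=s_{k,n-1}\cup i_n s_{k,n-1}$ for $k<n$ together with Lemma~\ref{lemma:new-elt}, and check the diagonal case $s_{n,n}$ directly from its explicit description as the even-length words.

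For commutation, the argument rests on Lemma~\ref{lemma:small_product}. I would partition $Q_n\setminus\{\pm1\}$ into the $8$-element blocks $\pm\{y,i_jy,i_ky,i_j(i_ky)\}$. On each such block the restrictions of $g_{j,n}$ and $g_{k,n}$ are precisely the involutions $q_{j,k}(y)$ and $tp_{k,j}(y)$ for an appropriate $t\in M_{j,k,y,s}$, where $s\in Z(Q_n)$ is determined by $i_j(i_ky)=s\cdot i_k(i_jy)$. Lemma~\ref{lemma:ind} guarantees that $s$ remains constant as $y$ varies over a $Q_{n-1}$-coset, which is exactly what is needed for the $T_x$-factors hardwired into $g_{j,n}$ and $g_{k,n}$ to line up with the $t$-prescriptions of Lemma~\ref{lemma:small_product} block by block. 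Thus every local restriction of $g_{j,n}g_{k,n}$ is an involution, so $g_{j,n}g_{k,n}$ itself is.

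Finally, Lemma~\ref{lemma:small} gives $|K_n|\le 2^n$. For the reverse inequality, since each $T_x$ fixes $\{z,-z\}$ setwise, the projection $Q_n\to Q_n/Z(Q_n)\cong(\mathbb{Z}_2)^n$ induces a well-defined homomorphism $\pi\colon K_n\to Sym((\mathbb{Z}_2)^n)$ sending $g_{k,n}$ to translation by the image of $i_k$. These translations generate the whole group of translations on $(\mathbb{Z}_2)^n$, so $\pi$ is surjective onto a group of order $2^n$, forcing $|K_n|=2^n$. The main obstacle in this plan is the sign bookkeeping of the second and third paragraphs: verifying that the author's specific choice of the sets $s_{k,n}$ produces exactly the parities required by $g_{k,n}^2=id$ and matches the four cases of Lemma~\ref{lemma:small_product} uniformly across all blocks. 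This is precisely why Lemmas~\ref{lemma:new-elt} and~\ref{lemma:ind} were proved beforehand, to propagate the needed associativity deviations across each Cayley--Dickson doubling step.
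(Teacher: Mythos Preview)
Your plan is essentially the paper's own argument: apply Lemma~\ref{lemma:small} after verifying $|g_{k,n}|=|g_{j,n}g_{k,n}|=2$ by induction via Lemmas~\ref{lemma:small_product}, \ref{lemma:new-elt}, \ref{lemma:ind}, and then obtain $|K|\ge 2^n$ from the action on $Q_n/\{\pm1\}$ (the paper phrases this last step as ``for each coset there is $g\in K$ with $g(1)$ in that coset'', which is exactly the surjectivity of your $\pi$).

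One genuine error to flag: the commutator does \emph{not} descend to a bilinear form on $Q_n/Z(Q_n)$ once $n\ge 3$. For instance $[i_1,i_2i_3]=-1$ while $[i_1,i_2][i_1,i_3]=(-1)(-1)=1$; Lemma~\ref{lemma:comm-assoc} says only that $[x,y]=-1$ iff $\bar x,\bar y$ are nonzero and distinct, which is not additive in either variable. So you cannot reduce the identity $\delta_{k,n}(i_ky)=-\delta_{k,n}(y)$ to a linear-algebra parity computation. The paper sidesteps this by observing that $\prod_{x\in s_{k,n}}T_x$ involves an \emph{odd} number ($2^{n-1}-1$) of nontrivial $T_x$ and therefore fixes precisely $\pm s_{k,n}$ and flips the rest; the condition $\delta_{k,n}(i_ky)=-\delta_{k,n}(y)$ then becomes ``each pair $\{\bar y,\overline{i_ky}\}$ meets $\bar s_{k,n}$ in exactly one element'', which is what your inductive verification (using $s_{k,n}=s_{k,n-1}\cup i_ns_{k,n-1}$ for $k<n$ and the even-length description of $s_{n,n}$) actually establishes. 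Drop the bilinearity sentence and the rest of your outline goes through and matches the paper.
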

\begin{proof}
We show by induction on $n$ that generators of $K$ have order~2. If $n=2$, then 
\begin{eqnarray}
\nonumber g_{1,2}&=&T_{i_{2}}T_{1}L_{i_{1}}=(1, -i_{1})(-1, i_{1})(i_{2}, -i_{1}i_{2})(-i_{2}, i_{1}i_{2}),\\
\nonumber g_{2,2}&=&T_{i_{1}i_{2}}T_{1}L_{i_{2}}=(1, -i_{2})(-1, i_{2})(i_{1}, -i_{1}i_{2})(-i_{1}, i_{1}i_{2})
\end{eqnarray}
are of order $2$. 
Suppose that generators $g_{1,n-1},g_{2,n-1},\ldots ,g_{n-1,n-1}$ of $K_{n-1}$ have order $2$. Note that a product of an odd number of mappings $T_{x_1}\ldots T_{x_{2^{n-1}-1}}$ (where $x_1,\ldots,x_{2^{n-1}-1}\in 
Q_n$) fixes $\pm\{1,x_1,\ldots,x_{2^{n-1}-1}\}$ and moves all other elements (see Lemma~\ref{lemma:Tx}). Left translation $L_{i_{k}}$ consists of 4-cycles of the form \[(x, i_{k}x, -x, -i_{k}x).\] In order to transform such cycle into two $2$-cycles, $L_{i_{k}}$ is multiplied by either $T_x$, or $T_{i_{k}x}$. Then the cycle \[(i_{n}x, i_{k}(i_{n}x), -i_{n}x, -i_{k}(i_{n}x))\] that appears in the next step of the inductive construction, is multiplied by either $T_{i_{n}x}$ or $T_{i_{n}(i_{k}x)}$, leading to either 
\begin{eqnarray}
\nonumber &&(i_{n}x, -i_{k}(i_{n}x))(-i_{n}x, i_{k}(i_{n}x)),\mbox{ or} \\
\nonumber &&(i_{n}x, i_{k}(i_{n}x))(-i_{n}x, -i_{k}(i_{n}x)), 
\end{eqnarray}
respectively.\\
If $x=1$, the 4-cycle that corresponds to $(1, i_{k}, -1, -i_{k})$ in the next step of the inductive construction is $(i_{n}, i_{k}i_{n}, -i_{n}, -i_{k}i_{n})$, 
which is multiplied by $T_{i_{n}}$ and becomes $(i_{n}, -i_{k}i_{n})(-i_{n}, i_{k}i_{n})$. 
It follows that $g_{k,n}$ consists of $2$-cycles and therefore $\left|g_{k,n}\right|=2$. \\
Consider a generator $g_{n,n}$ added at the $n$-th step of the inductive construction. 
Left translation $L_{i_{n}}$ consists of cycles of the form 
\[(x, i_{n}x, -x, -i_{n}x)\] 
where either $x$, or $i_{n}x$ (but not both) is a product of even number of units $i_{k}$, for some $k\leq n$. 
In the former case, if $x\neq\pm 1$, then multiplication of $L_{i_{n}}$ by $T_x$ transforms a cycle 
$(x, i_{n}x, -x, -i_{n}x)$ into 
\[(x, -i_{n}x)(-x, i_{n}x),\]
otherwise, multiplication of $L_{i_{n}}$ by $T_{i_{n}x}$ transforms it into 
\[(x, i_{n}x)(-x, -i_{n}x).\]
Also, $L_{i_{n}}$ is multiplied by $T_{x_{k}}, x_{k}\neq \pm i_{n}$, an odd number of times, mapping $i_{n}$ to $-i_{n}$, and a cycle $(1, i_{n}, -1, -i_{n})$ becomes 
\[(1, -i_{n})(-1, i_{n}).\] 
Generator $g_{n,n}$ consists of $2$-cycles and therefore $\left|g_{n}\right|=2$.\\
Next, use induction on $n$ to show that 
\[\left|g_{j,n}g_{k,n}\right|=2,\mbox{ for all } j,k\in\{1,\ldots,n\},j\neq k.\] 
If $n=2$, then 
\[g_{1,2}g_{2,2}=(1, i_{1}i_{2})(-1, -i_{1}i_{2})(i_{1}, i_{2})(-i_{1}, -i_{2})\]
and $\left|g_{1,2}g_{2,2}\right|=2.$ Suppose that $\left|g_{j,n-1}g_{k,n-1}\right|=2$ for any pair of generators 
$g_{j,n-1},g_{k,n-1}$ of $K_{n-1}$. Without loss of generality, let $j<k$. Up to renaming $x$ and $i_{j}x$, the cycles
\[
p_{j,k}(x)=L_{i_{j}}\upharpoonright_{\pm\{x,i_{j}x,i_{k}x,i_{j}(i_{k}x)\}}=(x, i_{j}x, -x, -i_{j}x)(i_{k}x, i_{j}(i_{k}x), -i_{k}x, -i_{j}(i_{k}x))
\] 
are acted upon by $T_{i_{k}x}T_{x}$ to construct $g_{j,n}$. Then, by Lemma~\ref{lemma:small_product}, the cycles 
\[
p_{k,j}(x)=L_{i_{k}}\upharpoonright_{\pm\{x,i_{j}x,i_{k}x,i_{j}(i_{k}x)\}}=(x, i_{k}x, -x, -i_{k}x)(i_{j}x, i_{k}(i_{j}x), -i_{j}x, -i_{k}(i_{j}x))
\]  
are acted upon by $t\in M_{j,k,x,s}$, where $t=T_{y}T_{z}$ for some $y,z\in Q_n$. The cycles
\[p_{j,k}(xi_{n})=(xi_{n}, i_{j}(xi_{n}), -xi_{n}, -i_{j}(xi_{n}))(i_{k}(xi_{n}), i_{j}(i_{k}(xi_{n})), -i_{k}(xi_{n}), -i_{j}(i_{k}(xi_{n}))) \]
added at the next step of the inductive construction are multiplied by $T_{i_{n}(i_{k}x)}T_{i_{n}x}$. The cycles 
\begin{eqnarray}
\nonumber p_{k,j}(xi_{n})&=&L_{i_{k}}\upharpoonright_{\pm\{xi_{n},i_{j}(xi_{n}),i_{k}(xi_{n}),i_{j}(i_{k}(xi_{n}))\}}\\
\nonumber &=&(xi_{n}, i_{k}(xi_{n}), -xi_{n}, -i_{k}(xi_{n}))(i_{j}(xi_{n}), i_{k}(i_{j}(xi_{n})), -i_{j}(xi_{n}), -i_{k}(i_{j}(xi_{n})))
\end{eqnarray}
are multiplied by $T_{yi_{n}}T_{zi_{n}}\in M_{j,k,xi_{n},s}$. By Lemma~\ref{lemma:ind}, $i_{j}(i_{k}(xi_{n}))=s(i_{k}(i_{j}(xi_{n})))$, and therefore by Lemma~\ref{lemma:small_product}, 
\[
\left|(T_{(i_{k}x)i_{n}}T_{xi_{n}}p_{j,k}(xi_{n}))\cdot (t p_{k,j}(xi_{n}))\right|=2\mbox{ for }t\in M_{j,k,xi_{n},s}.
\] 
It is left to show that $\left|g_{j,n}g_{n,n}\right|=2$, where $j<n$. Up to renaming $x$ and $i_{j}x$, the cycles
\[
p_{j,k}(x)=L_{i_{j}}\upharpoonright_{\pm\{x,i_{j}x,i_{n}x,i_{j}(i_{n}x)\}}=(x, i_{j}x, -x, -i_{j}x)(i_{n}x, i_{j}(i_{n}x), -i_{n}x, -i_{j}(i_{n}x))
\] 
are acted upon by $T_{i_{n}x}T_{x}$. By Lemma~\ref{lemma:new-elt}, $i_{j}(i_{n}x)=-i_{n}(i_{j}x)$, therefore by Lemma~\ref{lemma:small_product},
\[
\left|(T_{i_{n}x}T_{x}p_{j,k}(x))\cdot (tp_{n,j}(x))\right|=2\mbox{ where }t\in \{T_{x}T_{i_{j}(i_{n}x)},T_{i_{j}x}T_{i_{n}x}\}.
\] 
If $x$ is a product of even number of units $i_{k}$, for some $k\leq n$, then $i_{j}(i_{n}x)$ is also a product of even number of units, so $x,i_{j}(i_{n}x)$ are in $s_{n,n}$, and $T_{x}T_{i_{j}(i_{n}x)}$ is a part of the construction of $g_{n,n}$. If $x$ is a product of odd number of units, then $i_{j}x,i_{n}x$ are products of even number of units, and are included in $s_{n,n}$, so $T_{i_{j}x}T_{i_{n}x}$ is a part of the construction of $g_{n,n}$. In both cases this leads to $\left|g_{j,n}g_{n,n}\right|=2$.\\
Summarizing, $K$ satisfies the assumptions of Lemma~\ref{lemma:small} and is therefore an elementary abelian $2$-group.\\
To determine the order of $K$, define a mapping $\phi:Q_n/ \{1,-1\}\to K$ by
\[
\phi(\{i_{k},-i_{k}\})=g_{k,n},k\in\{1,\ldots,n\}.
\]
Note that for any 
\begin{eqnarray}
\nonumber x&=&\pm \{\prod_{j=1}^n i^{\epsilon_j}_{j}\} \in Q_n\slash \{1,-1\}\ \ ( \mbox{ where }\epsilon_j\in \{0,1\}),\mbox{ there is} \\
\label{eqn:k-product} g&=&\prod_{j=1}^n \phi (i^{\epsilon_j}_{j})=\prod_{j=1}^n g^{\epsilon_j}_{j,n},
\end{eqnarray}
such that $g(1)\in x$. We conclude that 
\[\left|K\right|\geq \left|Q_n/ \{1,-1\}\right|=\frac{\left|Q_n\right|}{2}=2^n.\] 
Also, $K$ is an elementary abelian $2$-group with $n$ generators, so 
\begin{equation}\label{eqn:k-size}
\left|K\right|\leq 2^n.  
\end{equation}
We conclude that the order of $K$ is $2^n$.
\end{proof}
For any loop $Q$, Albert showed $Z(Mlt(Q))=\{L_x\left|\right.x\in Z(Q)\}\cong Z(Q)$. To improve legibility, we will identify $Z(Mlt(Q))$ with $Z(Q)$ in what follows.

In Theorem~\ref{thm:semidir} we use the group $N=\left\langle Inn(Q_n),Z(Q_n)\right\rangle=Inn(Q_n)Z(Q_n)$ to establish the structure of $Mlt(Q_n)$. Recall that elements of $Inn(Q_n)$ are all even products of $2$-cycles $(x, -x)$ (where $1 \neq x\in Q_n\slash \{1,-1\}$). A group $Inn(Q_n)$ stabilizes~$1$, therefore $Inn(Q_n)\cap Z(Q_n)=1$. The index $[N:Inn(Q_n)]=2$, therefore $Inn(Q_n)\trianglelefteq N$, and $Z(Q_n)\trianglelefteq Mlt(Q_n)$ implies $Z(Q_n)\trianglelefteq N$. It follows that $N=Inn(Q_n)\times Z(Q_n)$, and $N=Inn(Q_n)\cup (-Inn(Q_n))$. \\
A basis for $Inn(Q_n)$ can be taken to be
\[
\left\{ T_xT_e=(x,-x)(e,-e)\left| 1,e \neq x\in Q_n\slash \{1,-1\}\right.\right\}.
\] 
Elements of $N$ are all even products of $2$-cycles $(x, -x)$, for $x\in Q_n\slash \{1,-1\}$. A mapping $L_{-1}T_e=(1,-1)(e,-e)$ can be used to construct a basis for $N$ (see Figure~\ref{fig:innxZ2}),
\[
N^{*}=\left\{ L_{-1}T_e,T_xT_e\left| 1,e \neq x\in Q_n\slash \{1,-1\}\right.\right\}.
\] 
\begin{figure}[H]
	\centering
		\includegraphics[width=0.6\textwidth]{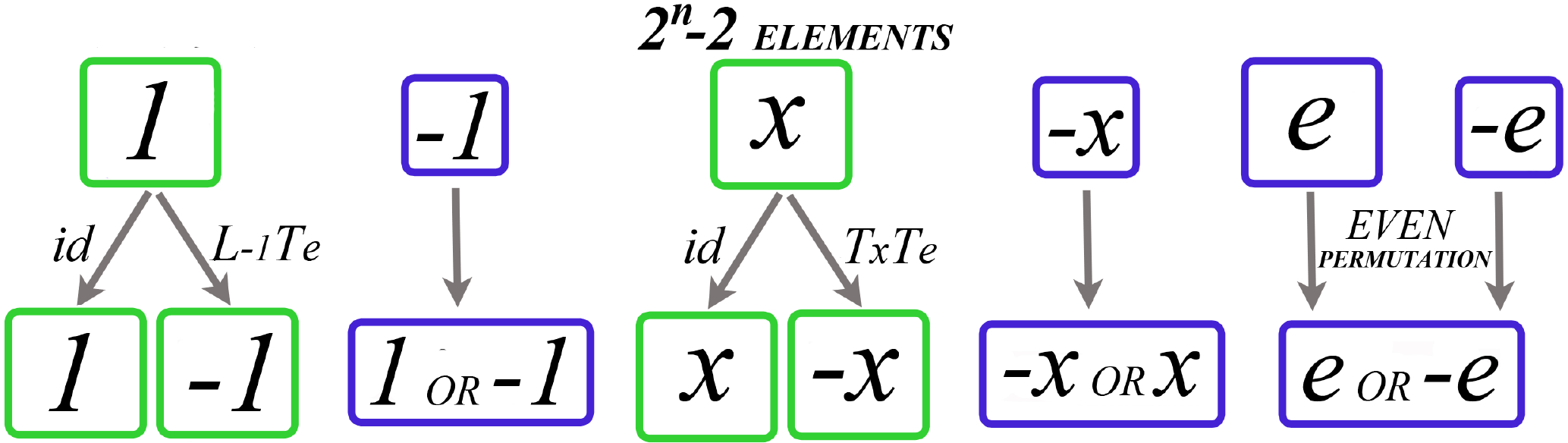}
	\caption{Group $N=Inn(Q_n)\times Z(Q_n)$}
	\label{fig:innxZ2}
\end{figure}
\begin{theorem}\label{thm:semidir}
Let $Q_n$ be a Cayley--Dickson loop, $n\geq 2$. Then $Mlt(Q_n)\cong (Inn(Q_n)\times Z(Q_n))\rtimes K$, where $K$ is the group constructed in Lemma~\ref{lemma:K}. In particular, $Mlt(Q_n)\cong ((\mathbb{Z}_{2})^{2^{n}-2}\times \mathbb{Z}_{2})\rtimes (\mathbb{Z}_{2})^{n}$.
\end{theorem}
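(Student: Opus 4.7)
The plan is to realize $N=Inn(Q_n)\times Z(Q_n)$ as the kernel of a natural homomorphism $\bar\pi\colon Mlt(Q_n)\to Q_n/\{1,-1\}$, use this to obtain both $N\trianglelefteq Mlt(Q_n)$ and $N\cap K=\{1\}$, and close by a counting argument.

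First I would construct $\bar\pi$. Let $\bar Q=Q_n/\{1,-1\}\cong(\mathbb{Z}_2)^n$ (Theorem~\ref{cor:Zn}). Since $L_{-1}\in Z(Mlt(Q_n))$, every $f\in Mlt(Q_n)$ satisfies $f(-y)=-f(y)$, so $f$ descends to a permutation $\bar f$ of $\bar Q$. On each generator $L_x$ or $R_x$ the induced map is $\bar y\mapsto\bar x\cdot\bar y$: commutators in $Q_n$ lie in $\{1,-1\}$ by Lemma~\ref{lemma:comm-assoc}, so $\overline{yx}=\overline{xy}$, and both translations act as the same translation of $\bar Q$. Since $\bar Q$ is abelian, translations form a group isomorphic to $\bar Q$, so $\bar\pi\colon f\mapsto\overline{f(1)}$ is a well-defined group homomorphism. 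Its kernel consists exactly of those $f$ preserving every pair $\{y,-y\}$; the description of $N$ preceding the theorem (together with Theorem~\ref{thm:inn-invol}) identifies $N$ as precisely the group of such even products of transpositions $(x,-x)$, and $|N|=2^{2^n-1}$. Hence $N=\ker\bar\pi\trianglelefteq Mlt(Q_n)$.

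Next I would show $K\cap N=\{1\}$. Equation~\eqref{eqn:k-product} gives that for every $\bar x\in\bar Q$ there is $g=\prod_{j=1}^n g_{j,n}^{\epsilon_j}\in K$ with $g(1)\in\{x,-x\}$, so $\bar\pi(g)=\bar x$. Thus $\bar\pi\upharpoonright_K$ surjects onto $\bar Q$; since $|K|=2^n=|\bar Q|$ by Lemma~\ref{lemma:K}, the restriction is an isomorphism, and therefore $K\cap N=\ker(\bar\pi\upharpoonright_K)=\{1\}$.

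Finally a count closes the argument: by Theorem~\ref{thm:inn-invol} and Lemma~\ref{lemma:mltsize},
\[
|N|\cdot|K|=2^{2^n-1}\cdot 2^n=2^{n+1}\cdot 2^{2^n-2}=|Q_n|\cdot|Inn(Q_n)|=|Mlt(Q_n)|,
\]
so $Mlt(Q_n)=NK$; combined with $N\trianglelefteq Mlt(Q_n)$ and $N\cap K=\{1\}$ this yields $Mlt(Q_n)=N\rtimes K$. The concrete isomorphism $Mlt(Q_n)\cong\bigl((\mathbb{Z}_2)^{2^n-2}\times\mathbb{Z}_2\bigr)\rtimes(\mathbb{Z}_2)^n$ then follows from Theorem~\ref{thm:inn-invol}, $|Z(Q_n)|=2$, and Lemma~\ref{lemma:K}. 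The only delicate point is checking that $\bar\pi$ is a genuine homomorphism—which is why the centrality of $\{1,-1\}$ in $Q_n$ and the abelianness of $Q_n/\{1,-1\}$ are doing all the work—after which everything reduces to orders of groups already computed.
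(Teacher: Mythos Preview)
Your argument is correct and reaches the same conclusion by a somewhat different, more conceptual route than the paper.

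The paper verifies $N\trianglelefteq Mlt(Q_n)$ by a bare-hands conjugation check: writing $g=\beta L_x$ with $\beta\in Inn(Q_n)$ and evaluating $g\alpha g^{-1}$ at~$1$ to see the result lands in $\{1,-1\}$, hence in $N$. It then argues $N\cap K=\{id\}$ by observing that $K$ contains a unique element sending $1$ into $\{1,-1\}$ (from \eqref{eqn:k-product} and \eqref{eqn:k-size}), which must be the identity; and it finishes with the index computation $[Mlt(Q_n):N]=|Q_n|/2=2^n=|K|$.

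Your approach packages normality and the trivial intersection into a single homomorphism $\bar\pi\colon Mlt(Q_n)\to Q_n/\{1,-1\}$, obtained by letting the multiplication group act on the central quotient. This is a genuine simplification: once you observe that the induced action on $\bar Q\cong(\mathbb{Z}_2)^n$ lands in the translation group (because both $L_x$ and $R_x$ descend to the same translation, associators and commutators being central), normality of $N=\ker\bar\pi$ is automatic, and $K\cap N=\{1\}$ falls out of $|\bar\pi(K)|=|K|$. The only point worth making explicit in your write-up is the identification $\ker\bar\pi=N$: you need Lemma~\ref{lemma:mlt-even} (elements of $Mlt(Q_n)$ are even) to conclude that a permutation preserving every pair $\{y,-y\}$ is an \emph{even} product of transpositions $(y,-y)$, which is exactly the paper's description of $N$ preceding the theorem. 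With that in place your argument is complete and arguably cleaner than the original.
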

\begin{proof}
Let $G=Mlt(Q_n)$, $N=Inn(Q_n)\times Z(Q_n)$, and $K$ be the group constructed in Lemma~\ref{lemma:K}. We want to show that $G=N\rtimes K$.
\begin{enumerate}
	\item Let $\alpha \in N, g\in G$. There exist $x\in Q_n, \beta \in Inn(Q_n)$ such that $g=\beta L_{x}$. Consider $g\alpha g^{-1}$ acting on~$1$,
	\begin{eqnarray}
	\nonumber g\alpha g^{-1}(1)&=&\beta L_{x}\alpha (\beta L_{x})^{-1}(1)=\beta L_{x}\alpha L_{x}^{-1}\underbrace{\beta^{-1}(1)}_{1}=\beta \underbrace{L_{x}\alpha L_{x}^{-1}(1)}_{\pm 1}=\pm \beta(1)=\pm 1.
	\end{eqnarray}
	This shows that $g\alpha g^{-1}\in Inn(Q_n)\cup (-Inn(Q_n))=N $, so $N$ is normal in $G$.
	\item By \eqref{eqn:k-product}, \eqref{eqn:k-size}, $K$ contains a unique element $g$ such that $g(1)\in \{1,-1\}$. Since $K$ is a group, $g=id$, thus $N \cap K=id$.
	\item We established that $N\unlhd G, K\leq G$, and $N\cap K=id$. We have $N\rtimes K\leq G$. Recall that 
	\begin{eqnarray}
	\nonumber \left[Mlt(Q_n):Inn(Q_n)\right] &=& \left|Q_n\right|\mbox{, thus} \\
	\nonumber \left[Mlt(Q_n):(Inn(Q_n)\times Z(Q_n))\right]&=&\left[Mlt(Q_n):Inn(Q_n)\right]\slash 2=2^n=\left|K\right|,
	\end{eqnarray}
	and	$(Inn(Q_n)\times Z(Q_n))\rtimes K \cong Mlt(Q_n)$ follows.
	\qedhere
\end{enumerate}
\end{proof}

%

We have shown that $Mlt(Q_n)$ is a semidirect product of two permutation groups $N$, $K$, both elementary abelian $2$-groups. Recall that if $N$, $K$ are groups and $\phi : K\to Aut(N)$ is a homomorphism, then the external semidirect product is defined on $N\times K$ by  
	\[
	(h_1,k_1)\circ (h_2,k_2)=(h_1\ast \phi_{k_1}(h_2),k_1\cdot k_2),\ \ h_1,h_2\in N,k_1,k_2\in K. 
	\]
In an internal semidirect product $G = N\rtimes_{\phi} K$, the action $\phi : K\to Aut(N)$ is natural, that is, by conjugation $\phi_{k_1}(h_2)=k_1h_2k_1^{-1}$.

\begin{lemma}\label{lemma:nat-act}\cite[p.170]{Rotman:94}
Let $G,N,K$ be finite groups such that $N\trianglelefteq G$, $K\leq G$, and $G = N\rtimes_{\phi} K$. Then $K$ acts on $N$ by conjugation.
\end{lemma}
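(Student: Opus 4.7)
The plan is to show that conjugation by elements of $K$ gives a well-defined automorphism action on $N$, and then verify that this action coincides with the structure homomorphism $\phi$ of the semidirect product.

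First I would define $\psi : K \to \mathrm{Aut}(N)$ by $\psi_k(n) = knk^{-1}$. Since $N \trianglelefteq G$ and $k \in K \leq G$, we have $knk^{-1} \in N$, so $\psi_k$ takes $N$ into $N$. Being the restriction of the inner automorphism $c_k : G \to G$, $x \mapsto kxk^{-1}$ to the invariant subgroup $N$, it is a group automorphism of $N$. The assignment $\psi$ is a homomorphism from $K$ to $\mathrm{Aut}(N)$ because
\begin{equation*}
\psi_{k_1 k_2}(n) \;=\; (k_1 k_2)\,n\,(k_1 k_2)^{-1} \;=\; k_1\bigl(k_2 n k_2^{-1}\bigr)k_1^{-1} \;=\; \psi_{k_1}\bigl(\psi_{k_2}(n)\bigr).
\end{equation*}

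Next I would check that $\psi = \phi$. Since $G = N \rtimes_\phi K$ is an internal semidirect product, the conditions $N \trianglelefteq G$, $K \leq G$, $N \cap K = \{1\}$, and $NK = G$ hold, so every $g \in G$ has a unique expression $g = nk$ with $n \in N$, $k \in K$. For $n_1, n_2 \in N$ and $k_1, k_2 \in K$, direct computation inside $G$ gives
\begin{equation*}
(n_1 k_1)(n_2 k_2) \;=\; n_1\bigl(k_1 n_2 k_1^{-1}\bigr)(k_1 k_2) \;=\; \bigl(n_1 \psi_{k_1}(n_2)\bigr)(k_1 k_2),
\end{equation*}
with $n_1 \psi_{k_1}(n_2) \in N$ and $k_1 k_2 \in K$. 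On the other hand, the defining property of the semidirect product yields $(n_1 k_1)(n_2 k_2) = \bigl(n_1 \phi_{k_1}(n_2)\bigr)(k_1 k_2)$. By uniqueness of the decomposition, $\phi_{k_1}(n_2) = \psi_{k_1}(n_2)$ for all $k_1 \in K$ and $n_2 \in N$, so $\phi = \psi$ and $K$ acts on $N$ by conjugation, as claimed.

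The argument is essentially a bookkeeping exercise; the only point requiring care is to distinguish the external versus internal semidirect product viewpoints so that uniqueness of the $NK$ factorization can legitimately be invoked to identify $\phi$ with conjugation. No deeper obstacle arises.
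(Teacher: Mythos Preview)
Your argument is correct and is the standard textbook verification that the structure map of an internal semidirect product is conjugation. Note, however, that the paper does not supply its own proof of this lemma: it is simply cited from Rotman \cite[p.~170]{Rotman:94} without argument, so there is no in-paper proof to compare against.
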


In~\cite{Kirshtein:12-2} we construct an isomorphic copy of $Mlt(Q_n)$ as an external semidirect product of two abstract elementary abelian $2$-groups.

\section{Left and Right Inner Mapping Groups}\label{sec:rlinn}
In this section we discuss one-sided inner mapping groups of Cayley--Dickson loops.
 It is well known that $Mlt_l(Q)\cong Mlt_r(Q)$ and $Inn_l(Q)\cong Inn_r(Q)$ in any inverse property loop~$Q$. We give the proofs in Theorem~\ref{lemma:IP_RLMlt} and Corollary~\ref{cor:IP_RLInn} for completeness. 
\begin{theorem}\label{lemma:IP_RLMlt}
Let $Q$ be an inverse property loop. Then $Mlt_l(Q)\cong Mlt_r(Q)$.
\end{theorem}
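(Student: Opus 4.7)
The plan is to exhibit an explicit isomorphism induced by the inverse map $J: Q \to Q$ defined by $J(x) = x^{-1}$. Since $Q$ is an inverse property loop, the inverse is a genuine two-sided inverse and $J$ is an involutive bijection of $Q$, so conjugation by $J$ is an automorphism of $Sym(Q)$.

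The key computation is the identity $J L_x J^{-1} = R_{x^{-1}}$, which I would verify by evaluating on an arbitrary $y \in Q$:
\[
J L_x J^{-1}(y) = J L_x(y^{-1}) = J(xy^{-1}) = (xy^{-1})^{-1} = yx^{-1} = R_{x^{-1}}(y),
\]
where the third equality uses the anti-automorphic inverse property $(ab)^{-1} = b^{-1}a^{-1}$, which holds in any inverse property loop and is recalled in the excerpt. Symmetrically, $J R_x J^{-1} = L_{x^{-1}}$.

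With this in hand, conjugation by $J$ restricts to a group homomorphism $\Phi: Mlt_l(Q) \to Sym(Q)$ given by $\Phi(f) = JfJ^{-1}$, whose image is $\langle R_{x^{-1}} : x \in Q \rangle = \langle R_y : y \in Q \rangle = Mlt_r(Q)$. Since $\Phi$ is the restriction of an inner automorphism of $Sym(Q)$, it is injective, so $\Phi$ is an isomorphism $Mlt_l(Q) \cong Mlt_r(Q)$.

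There is no real obstacle here; the only subtlety is confirming that one may apply the anti-automorphic inverse property, but this is exactly the hypothesis that $Q$ is an inverse property loop (and is recalled right before Theorem~\ref{thm:inn-gen} in the paper). The same conjugation map will also be used in the immediate corollary to show $Inn_l(Q) \cong Inn_r(Q)$, since $J$ fixes $1$ and therefore $\Phi$ sends the stabilizer of $1$ in $Mlt_l(Q)$ to the stabilizer of $1$ in $Mlt_r(Q)$.
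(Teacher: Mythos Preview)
Your proof is correct and, at bottom, uses the same map as the paper: the paper defines $f(L_a)=R_a^{-1}$ on generators and then spends most of the argument checking that this extends to a well-defined homomorphism, by taking an arbitrary word $\prod L_{a_i}^{\epsilon_i}$ in $Mlt_l(Q)$, applying it to $x$, inverting via the anti-automorphic inverse property, and reading off $\prod R_{a_i}^{-\epsilon_i}$ applied to $x^{-1}$. Unwinding that computation, one sees the paper is verifying precisely the relation $J\circ S = f(S)\circ J$, i.e.\ $f(S)=JSJ^{-1}$---exactly your $\Phi$.

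The genuine difference is in packaging: by recognising the map as conjugation by the involution $J\in Sym(Q)$, you get well-definedness, bijectivity, and the homomorphism property for free from the ambient automorphism of $Sym(Q)$, whereas the paper checks well-definedness by hand on words. Your route is shorter and more conceptual; the paper's has the (minor) advantage of making the inverse homomorphism $g(R_a)=L_a^{-1}$ explicit rather than implicit in $J^{-1}=J$. Your remark that $J(1)=1$ immediately gives $Inn_l(Q)\cong Inn_r(Q)$ also matches the paper's Corollary~\ref{cor:IP_RLInn}, again with a cleaner justification.
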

\begin{proof}
Define a partial mapping $f:Mlt_l(Q)\to Mlt_r(Q)$ by $f(L_a)=R_a^{-1}$. We want to extend this mapping to a homomorphism.
Let $S\in Mlt_l(Q)$, then 
\[
S=\prod_{i=1}^nL_{a_i}^{\epsilon_i},\ \ a_i\in Q, \epsilon_i\in \{0,1\}.
\]
To verify that a mapping 
\[
f(S)=f(\prod_{i=1}^nL_{a_i}^{\epsilon_i})=\prod_{i=1}^nR_{a_i}^{-\epsilon_i}
\]
is well-defined, we show that if 
\[
S=\prod_{i=1}^nL_{a_i}^{\epsilon_i}=\prod_{j=1}^mL_{b_j}^{\phi_j}, 
\]
then
\[
f(S)=\prod_{i=1}^nR_{a_i}^{-\epsilon_i}=\prod_{j=1}^mR_{b_j}^{-\phi_j}. 
\]
Let $x\in Q$, then 
\begin{eqnarray}
\nonumber a_n^{\epsilon_n}(\ldots a_2^{\epsilon_2}(a_1^{\epsilon_1}x))&=&b_m^{\phi_m}(\ldots b_2^{\phi_2}(b_1^{\phi_1}x)),\mbox{ and}\\
\nonumber (a_n^{\epsilon_n}(\ldots a_2^{\epsilon_2}(a_1^{\epsilon_1}x)))^{-1}&=&(b_m^{\phi_m}(\ldots b_2^{\phi_2}(b_1^{\phi_1}x)))^{-1}.
\end{eqnarray}
Inverse property implies that $(xy)^{-1}=y^{-1}x^{-1}$,
thus
\begin{eqnarray}
\nonumber (a_n^{\epsilon_n}(\ldots a_2^{\epsilon_2}(a_1^{\epsilon_1}x)))^{-1}&=&((x^{-1}a_1^{-\epsilon_1})a_2^{-\epsilon_2})\ldots a_n^{-\epsilon_n}=\prod_{i=1}^nR_{a_i}^{-\epsilon_i}(x),
\end{eqnarray}
and
\begin{eqnarray}
\nonumber (b_m^{\phi_m}(\ldots b_2^{\phi_2}(b_1^{\phi_1}x)))^{-1}&=&((x^{-1}b_1^{-\phi_1})b_2^{-\phi_2})\ldots b_m^{-\phi_m}=\prod_{j=1}^mR_{b_j}^{-\phi_j}(x).
\end{eqnarray}
We conclude that 
\[
\prod_{i=1}^nR_{a_i}^{-\epsilon_i}=\prod_{j=1}^mR_{b_j}^{-\phi_j}
\]
and the mapping $f$ is well-defined. The mapping $f$ is a homomorphism since it has an inverse $g:Mlt_r(Q)\to Mlt_l(Q)$ defined by $g(R_a)=L_a^{-1}$.
\end{proof}
\begin{corollary}\label{cor:CD_RLMlt}
Let $Q_n$ be a Cayley--Dickson loop. Then $Mlt_l(Q_n)\cong Mlt_r(Q_n)$.
\end{corollary}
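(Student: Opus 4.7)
The plan is to observe that Corollary~\ref{cor:CD_RLMlt} is an immediate specialization of Theorem~\ref{lemma:IP_RLMlt} to the class of Cayley--Dickson loops, so the only thing that needs verification is that $Q_n$ satisfies the inverse property hypothesis of that theorem. This reduces the proof to a single citation plus a one-line observation.

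First I would invoke the theorem of Culbert cited in Section~\ref{sec:intro}, which states that any two elements of $Q_n$ generate a subgroup of the quaternion group $\mathbb{H}_8$. In particular $Q_n$ is diassociative. As the excerpt remarks immediately after the definition of the inverse property loop, diassociativity implies the inverse property: given $x,y\in Q_n$, the subloop $\langle x,y\rangle$ is a group, so taking $x^{\lambda}=x^{\rho}=x^{-1}$ (which exists uniquely by Proposition~\ref{prop:prop}(2)) we have $x^{\lambda}(xy)=y$ and $(yx)x^{\rho}=y$ as identities inside this associative subloop. Thus $Q_n$ is an inverse property loop.

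Having established this, the corollary follows: Theorem~\ref{lemma:IP_RLMlt} applied to $Q = Q_n$ produces the desired isomorphism $Mlt_l(Q_n)\cong Mlt_r(Q_n)$, realized explicitly by the map sending $L_a\mapsto R_a^{-1}$. There is no genuine obstacle here; the entire content of the corollary is absorbed into the general theorem, and the role of this statement is only to record the specialization that will be used in the subsequent study of one-sided multiplication groups of $Q_n$.
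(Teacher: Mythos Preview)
Your proposal is correct and matches the paper's intent: the corollary is stated immediately after Theorem~\ref{lemma:IP_RLMlt} with no separate proof, so it is meant to follow directly from that theorem together with the fact (already recorded in Section~\ref{sec:intro}) that $Q_n$ is diassociative and hence an inverse property loop. Your justification of the inverse property via Culbert's diassociativity result is exactly the missing one-line observation.
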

\begin{corollary}\label{cor:IP_RLInn}
Let $Q$ be an inverse property loop. Then $Inn_l(Q)\cong Inn_r(Q)$.
\end{corollary}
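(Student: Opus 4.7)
The plan is to recycle the isomorphism $f:Mlt_l(Q)\to Mlt_r(Q)$ constructed in the proof of Theorem~\ref{lemma:IP_RLMlt} and verify that it restricts to an isomorphism between the stabilizers of $1$, namely $Inn_l(Q)$ and $Inn_r(Q)$.

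First I would extract from the calculation in the proof of Theorem~\ref{lemma:IP_RLMlt} the key pointwise formula
\[
f(S)(x) = \bigl(S(x^{-1})\bigr)^{-1}
\]
for every $S\in Mlt_l(Q)$ and $x\in Q$; this is immediate from the identity $(a_n^{\epsilon_n}(\cdots(a_1^{\epsilon_1}y)))^{-1}=((y^{-1}a_1^{-\epsilon_1})a_2^{-\epsilon_2})\cdots a_n^{-\epsilon_n}$ upon setting $y=x^{-1}$. Using $1^{-1}=1$, this formula yields
\[
f(S)(1)=\bigl(S(1)\bigr)^{-1},
\]
so $S(1)=1$ if and only if $f(S)(1)=1$. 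Hence $f$ carries $Inn_l(Q)=Mlt_l(Q)_1$ into $Inn_r(Q)=Mlt_r(Q)_1$. The symmetric argument applied to the inverse homomorphism $g:Mlt_r(Q)\to Mlt_l(Q)$, $g(R_a)=L_a^{-1}$, shows that $g$ carries $Inn_r(Q)$ into $Inn_l(Q)$. Since $f$ and $g$ are mutually inverse on all of $Mlt_l(Q)$ and $Mlt_r(Q)$, their restrictions to the respective stabilizers of $1$ are mutually inverse group homomorphisms, giving the desired isomorphism $Inn_l(Q)\cong Inn_r(Q)$.

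There is really no obstacle here; the only thing to be careful about is distinguishing the pointwise action of $f(S)$ on $x$ (which goes through $x^{-1}$) from the simple symbolic definition $f(L_a)=R_a^{-1}$. Once the pointwise formula is written down, the conclusion that the $1$-stabilizer is preserved is completely mechanical, and the corollary follows in a few lines.
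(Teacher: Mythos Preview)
Your proposal is correct and follows essentially the same approach as the paper: both restrict the isomorphism $f$ from Theorem~\ref{lemma:IP_RLMlt} to the stabilizer of $1$ and use the anti-automorphic inverse identity to check that $f(S)(1)=(S(1))^{-1}$. If anything, you are slightly more careful than the paper in explicitly invoking the inverse homomorphism $g$ to establish surjectivity onto $Inn_r(Q)$, whereas the paper leaves that step implicit.
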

\begin{proof}
Note that $f\upharpoonright_{Inn_l(Q)}$ is an isomorphism from $Inn_l(Q)$ to $Inn_r(Q)$.
If 
\begin{eqnarray}
\nonumber S&=&\prod_{i=1}^nL_{a_i}^{\epsilon_i}\in Inn_l(Q),\mbox{ then}\\
\nonumber S(1)&=&a_n^{\epsilon_n}(\ldots a_2^{\epsilon_2}(a_1^{\epsilon_1}1))=1.
\end{eqnarray}
Taking the inverse, we have 
\[
(a_n^{\epsilon_n}(\ldots a_2^{\epsilon_2}(a_1^{\epsilon_1}1)))^{-1}=(((1a_1^{-\epsilon_1})a_2^{-\epsilon_2})\ldots a_n^{-\epsilon_n})=1,
\]
thus
\[
f(Inn_l(Q))=\prod_{i=1}^nR_{a_i}^{-\epsilon_i}\in Inn_r(Q).
\qedhere
\]
\end{proof}
In fact, a stronger statement holds for the Cayley--Dickson loops. As can be seen in the following Lemma, when $Q_n$ is a Cayley--Dickson loop, the left inner mapping groups $Inn_l(Q_n)$ are equal to the right inner mapping groups $Inn_r(Q_n)$.
\begin{lemma}\label{cor:RL}
Let $Q_n$ be a Cayley--Dickson loop. Then $Inn_l(Q_n)=Inn_r(Q_n)$, and $Inn(Q_n)=\left\langle T_{x},L_{x,y}\left|\right.x,y\in Q_n\right\rangle$.
\end{lemma}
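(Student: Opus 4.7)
The proof is essentially an immediate consequence of Corollary~\ref{cor:RLmaps} combined with the generator descriptions of the inner mapping groups from Theorem~\ref{thm:inn-gen}. My plan is to simply chase the definitions.

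First, recall that Theorem~\ref{thm:inn-gen} provides generating sets
\[
Inn_l(Q_n) = \left\langle L_{x,y} \mid x,y \in Q_n\right\rangle, \quad Inn_r(Q_n) = \left\langle R_{x,y} \mid x,y \in Q_n\right\rangle,
\]
and
\[
Inn(Q_n) = \left\langle L_{x,y}, R_{x,y}, T_x \mid x,y \in Q_n\right\rangle.
\]
By Corollary~\ref{cor:RLmaps}, which was derived from the symmetry $[x,y,z] = [z,y,x]$ established in Lemma~\ref{1}, we have $L_{x,y} = R_{x,y}$ for every $x,y \in Q_n$. Substituting this identity into the two one-sided generating sets above yields $Inn_l(Q_n) = Inn_r(Q_n)$ directly.

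For the second assertion, substituting $R_{x,y} = L_{x,y}$ into the generating set for $Inn(Q_n)$ collapses the list to $\{T_x, L_{x,y} \mid x,y \in Q_n\}$, giving
\[
Inn(Q_n) = \left\langle T_x, L_{x,y} \mid x,y \in Q_n\right\rangle,
\]
as desired.

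There is no real obstacle here, since all the work has already been done: the nontrivial content (that the left and right associator-type inner maps coincide on $Q_n$) is encoded in Corollary~\ref{cor:RLmaps}, which in turn relies on the associator symmetry Lemma~\ref{1}. The present lemma is just the formal consequence of combining those facts with the standard generating sets for the inner mapping groups.
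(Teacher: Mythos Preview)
Your proposal is correct and follows the same approach as the paper: both arguments simply invoke Corollary~\ref{cor:RLmaps} (that $L_{x,y}=R_{x,y}$ for all $x,y\in Q_n$) and combine it with the standard generating sets of Theorem~\ref{thm:inn-gen}. The paper's proof is even terser, but the content is identical.
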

\begin{proof}
For all $x,y\in Q_n$, we have $L_{x,y}=R_{x,y}$ by Corollary~\ref{cor:RLmaps}.
\end{proof}
Lemma~\ref{lemma:ik-assoc} serves a purpose similar to that of Lemma~\ref{lemma:aut2}, providing information about associators. Lemmas~\ref{lemma:ik-assoc}~and~\ref{lemma:r_prod} are used in the proof of Theorem~\ref{thm:rlinn}. 
\begin{lemma}\label{lemma:ik-assoc}
Let $Q_n$ be a Cayley--Dickson loop, $i_k$ its canonical generator, $x\in Q_k$, $y\in Q_ke$, $n\geq 4$, and $k<n$. Then
\begin{equation*}
[i_k,x,y]=[x,i_k,y]=
\begin{cases}
1, & \mbox{when } y \in Q_ke \backslash\pm\{e,i_ke,xe,xi_ke\},\\
-1, & \mbox{otherwise}.
\end{cases}
\end{equation*}
Moreover, if $x\notin \pm\{1,i_k\}$, then
\begin{equation*}
\left\langle i_k,x,y\right\rangle\cong
\begin{cases}
\tilde{\mathbb{O}}_{16}, & \mbox{when } y \in Q_ke \backslash\pm\{e,i_ke,xe,xi_ke\},\\
\mathbb{O}_{16}, & \mbox{otherwise}.
\end{cases}
\end{equation*}
\end{lemma}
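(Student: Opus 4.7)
The plan is to first reduce the associator computation to data inside $Q_{n-1}$ via the Cayley--Dickson multiplication rules, and then analyze the subloop $\langle q,x,i_k\rangle$ in $Q_k$ to fix the sign. Writing $y=qe$ with $q\in Q_k$ and identifying elements of $Q_n=Q_{n-1}\cup Q_{n-1}e$ as pairs, so that $i_k=(i_k,0)$, $x=(x,0)$, $y=(q,1)$, the rule $(a,0)(b,1)=(ba,1)$ gives $(i_k x)\cdot y=(q(i_k x),1)$ and $i_k(xy)=((qx)i_k,1)$. Since $[i_k,x,y]\in\{\pm 1\}\subseteq Z(Q_n)$ acts componentwise, $[i_k,x,y]$ equals the sign relating $q(i_k x)$ and $(qx)i_k$ in $Q_{n-1}$. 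Pulling the associator and then the commutator out of $(qx)i_k=[q,x,i_k]\,q(xi_k)=[q,x,i_k][x,i_k]\,q(i_k x)$ yields the identity $[i_k,x,qe]=[q,x,i_k]\cdot[x,i_k]$. A parallel computation gives $[x,i_k,qe]=[q,i_k,x]\cdot[x,i_k]$; in each of the structures for $\langle q,x,i_k\rangle$ that will actually arise (namely $\leq\mathbb{H}_8$ or $\cong\mathbb{O}_{16}$) Lemma~\ref{lemma:comm-assoc} forces $[q,x,i_k]=[q,i_k,x]$, so the two associators coincide.

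With the hypothesis $x\notin\pm\{1,i_k\}$ we have $\langle x,i_k\rangle\cong\mathbb{H}_8$ and $[x,i_k]=-1$, so the formula reduces to $[i_k,x,qe]=-[q,x,i_k]$, and the sign is determined by $\langle q,x,i_k\rangle\subseteq Q_k$. If $q\in\langle x,i_k\rangle$, equivalently $qe\in\pm\{e,i_k e,xe,xi_k e\}$, then $\langle q,x,i_k\rangle\leq\mathbb{H}_8$, so $[q,x,i_k]=1$ by Lemma~\ref{lemma:comm-assoc} and $[i_k,x,y]=-1$; moreover diassociativity gives $e=q^{-1}\cdot qe$, so $\langle i_k,x,y\rangle=\langle i_k,x,e\rangle\cong\mathbb{O}_{16}$ by Lemma~\ref{lemma:aut2}. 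In the complementary case $q\notin\langle x,i_k\rangle$, I would verify $\langle q,x\rangle\cong\mathbb{H}_8$ and $i_k\notin\langle q,x\rangle$ using the two-generator trichotomy: $q,x$ are non-central and not $\pm$ each other, and each of $i_k=\pm q,\pm x,\pm qx$ is ruled out by $x\neq\pm i_k$ and $q\notin\langle x,i_k\rangle$. Lemma~\ref{lemma:aut2} applied inside $Q_k$, viewing $Q_k$ as a Cayley--Dickson loop with $i_k$ in the role of the last generator, then yields $\langle q,x,i_k\rangle\cong\mathbb{O}_{16}$, so $[q,x,i_k]=-1$ and $[i_k,x,y]=+1$.

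For the structural part, $\langle i_k,x,y\rangle$ contains $\mathbb{H}_8=\langle i_k,x\rangle$ together with $y\in Q_k e$ lying outside $Q_k$, so its order is at least $16$; Lemma~\ref{lemma:order} caps it at $16$, and Lemma~\ref{lemma:quasioct} then forces it to be isomorphic to $\mathbb{O}_{16}$ or $\tilde{\mathbb{O}}_{16}$. The first case is already identified as $\mathbb{O}_{16}$ above; in the second case the computed value $[i_k,x,y]=+1$ excludes $\mathbb{O}_{16}$, since any generating triple of $\mathbb{O}_{16}$ associates to $-1$ by Lemma~\ref{lemma:comm-assoc}, leaving $\tilde{\mathbb{O}}_{16}$. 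The main obstacle is precisely this one-level-down invocation of Lemma~\ref{lemma:aut2}: it is what kills the $\tilde{\mathbb{O}}_{16}$ alternative for $\langle q,x,i_k\rangle$ and pins down the sign $[q,x,i_k]=-1$ that flips the outer associator to $+1$.
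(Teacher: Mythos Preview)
Your argument is correct and takes a somewhat different route from the paper's. You reduce the outer associator in one step, via the Cayley--Dickson rule $(a,0)(b,1)=(ba,1)$, to the compact identity $[i_k,x,qe]=[q,x,i_k]\,[x,i_k]$, and then decide the single remaining associator $[q,x,i_k]$ by applying Lemma~\ref{lemma:aut2} \emph{inside} $Q_k$. The paper instead expands $(i_kx)(ze)$ and $(xi_k)(ze)$ through a chain of four associators each, three of which involve $e$ and are dispatched by Lemma~\ref{lemma:aut2} in $Q_n$, with the fourth being the same $[i_k,x,z]$ in $Q_k$. So both proofs hinge on the one-level-down use of Lemma~\ref{lemma:aut2}; your reduction reaches it faster and with fewer moving parts, while the paper's expansion keeps the computation entirely at the level of associators in $Q_n$ (which meshes with the style of the surrounding lemmas such as Lemma~\ref{rmk:assoc}).

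Two small points. First, you do not explicitly treat $x\in\pm\{1,i_k\}$; your identity $[i_k,x,qe]=[q,x,i_k]\,[x,i_k]$ is valid there too and gives $1\cdot 1=1$, matching the paper's opening sentence, so just say so. Second, for the equality $[q,x,i_k]=[q,i_k,x]$ you may simply cite Lemma~\ref{lemma:comm-assoc} directly (the associator depends only on the subloop $\langle q,x,i_k\rangle$, hence is symmetric in its arguments whenever that subloop is $\le\mathbb{H}_8$ or $\cong\mathbb{O}_{16}$); there is no need to route through Lemma~\ref{1}.
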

\begin{proof}
Since $x\in Q_k$ and $y \in Q_ke$, we get $y\notin \left\langle i_k,x\right\rangle$. Consider the loop $\left\langle i_k,x,y\right\rangle$. If $x\in \pm\{1,i_k\}$, then $\left\langle i_k,x,y\right\rangle \cong \mathbb{H}_8$ and $[i_k,x,y]=[x,i_k,y]=1$. If $x\notin \pm\{1,i_k\}$, then $\left\langle i_k,x\right\rangle \cong \mathbb{H}_8$ and $\left|\left\langle i_k,x,y\right\rangle\right|=16$ by Lemma~\ref{lemma:order}. In this case, if $y \in \pm\{e,i_ke,xe,xi_ke\}$, then $\left\langle i_k,x,y\right\rangle =\pm\{1,x,i_k,xi_k,e,xe,i_ke,xi_ke\} \cong \mathbb{O}_{16}$ and $[i_k,x,y]=[x,i_k,y]=-1$ by Lemmas~\ref{lemma:aut2}~and~\ref{lemma:comm-assoc}. It remains to consider $x\in Q_k\backslash\pm\{1,i_k\}$, $y \in Q_ke\backslash\pm\{e,i_ke,xe,xi_ke\}$. We can write $y=ze$ for some $z\in Q_k$, then
\begin{eqnarray*}
(i_kx)(ze)&=&[i_kx,z,e]((i_kx)z)e=[i_kx,z,e][i_k,x,z](i_k(xz))e\\
					&=&[i_kx,z,e][i_k,x,z][i_k,xz,e]i_k((xz)e)\\
					&=&[i_kx,z,e][i_k,x,z][i_k,xz,e][x,z,e]i_k(x(ze))\\
					&=&i_k(x(ze)),\\
(xi_k)(ze)&=&[xi_k,z,e]((xi_k)z)e=[xi_k,z,e][x,i_k,z](x(i_kz))e\\
					&=&[xi_k,z,e][x,i_k,z][x,i_kz,e]x((i_kz)e)\\
					&=&[xi_k,z,e][x,i_k,z][x,i_kz,e][i_k,z,e]x(i_k(ze))=x(i_k(ze)),
\end{eqnarray*}
since $x,z\in Q_k$, and 
\begin{eqnarray}
\nonumber [i_kx,z,e]&=&[i_k,x,z]=[i_k,xz,e]=[x,z,e]=-1,\\
\nonumber [xi_k,z,e]&=&[x,i_k,z]=[x,i_kz,e]=[i_k,z,e]=-1 
\end{eqnarray}
by Lemmas~\ref{lemma:aut2}~and~\ref{lemma:comm-assoc}. Thus 
\begin{eqnarray}
\nonumber [i_k,x,ze]&=&[i_k,x,y]=1,\\
\nonumber [x,i_k,ze]&=&[x,i_k,y]=1.
\end{eqnarray}
If $\left|\left\langle i_k,x,y\right\rangle\right|=16$ and $[i_k,x,y]=1$, then $\left\langle i_k,x,y\right\rangle\cong \tilde{\mathbb{O}}_{16}$ by Lemmas~\ref{lemma:aut2}~and~\ref{lemma:quasioct}.
\end{proof}

\begin{lemma} \cite{Kirshtein:12} \label{rmk:assoc} If $x,y,z\in Q_{n-1}$, then in $Q_n$ we have
\begin{enumerate}[(a)]
	\item $[(x,0),(y,0),(z,1)]=[x,y][z,y,x],$	
	\item $[(x,0),(y,1),(z,0)]=[x,z][y,x,z][y,z,x],$	
	\item $[(x,0),(y,1),(z,1)]=[x,y][x,z][z,x,y][x,z,y],$
  \item $[(x,1),(y,0),(z,0)]=[y,z][x,y,z],$
	\item $[(x,1),(y,0),(z,1)]=[y,x][y,z][z,y,x],$
	\item $[(x,1),(y,1),(z,0)]=[z,x][z,y][y,x,z][y,z,x],$
	\item $[(x,1),(y,1),(z,1)]=[x,y][x,z][y,z][z,x,y][x,z,y].$
\end{enumerate}
\end{lemma}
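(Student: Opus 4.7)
The plan is to treat the seven cases in parallel by a single routine. For each triple $a,b,c\in Q_n$ given in the prescribed $(\cdot,0)$/$(\cdot,1)$ form, I would unfold the two products $(ab)c$ and $a(bc)$ using the four Cayley--Dickson multiplication rules. In every case both products turn out to lie in the same layer ($Q_{n-1}\times\{0\}$ or $Q_{n-1}\times\{1\}$), so by Lemma~\ref{lemma:comm-assoc} the associator $[a,b,c]\in\{1,-1\}$ is precisely the central sign relating their first coordinates, and the entire problem reduces to computing a single sign in $Q_{n-1}$.

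To pass from one bracketing of a product in $Q_{n-1}$ to another, I would repeatedly apply the commutator identity $uv=(vu)[u,v]$ and the associator identity $(uv)w=(u(vw))[u,v,w]$. Because each resulting $[u,v]$ and $[u,v,w]$ lies in $\{1,-1\}\subseteq Z(Q_{n-1})$, the emerging scalars are central and self-inverse, so rewrites can be chained without producing any cross terms. As a model calculation, in case~(a) the rules give $(ab)c=(z(xy),1)$ and $a(bc)=((zy)x,1)$, and
\begin{equation*}
z(xy)=[x,y]\,z(yx)=[x,y][z,y,x]\,(zy)x,
\end{equation*}
whence $[(x,0),(y,0),(z,1)]=[x,y][z,y,x]$. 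Each of the other six formulas falls out by the same three- or four-step bookkeeping.

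The main subtlety is conjugation: whenever a rule introduces $z^{*}$, I need $z^{*}=-z$ for $z\neq\pm 1$ and $z^{*}=z$ for $z=\pm 1$, combined with centrality of $-1$ and the anti-automorphic inverse property $(uv)^{*}=v^{*}u^{*}$. If an argument is $\pm 1$, every commutator and associator containing it is trivial, so the stated formulas collapse to $1$, matching the fact that any associator involving a central element is $1$; no separate degenerate case analysis is required. The bulkiest bookkeeping, and the main practical obstacle, is case~(g) with $a=(x,1)$, $b=(y,1)$, $c=(z,1)$: one must unfold $(ab)c=(-z(y^{*}x),0)$ and $a(bc)=(-x(y^{*}z),0)$ — which already involves three applied conjugations (from $ab$, $bc$, and the outer $*$) — and then use a chain of four rewrites to reach the five-factor expression. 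Lemma~\ref{1}, which gives $[u,v,w]=[w,v,u]$, is useful at the end of a few cases to put the answer into the exact form stated.
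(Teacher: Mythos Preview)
The paper does not prove this lemma at all; it is quoted from \cite{Kirshtein:12} and only \emph{used} here (in Lemma~\ref{cor:xe}). Your direct computation---expand $(ab)c$ and $a(bc)$ with the four doubling rules, observe that both land in the same layer, and rewrite one first coordinate into the other via central commutator/associator moves---is the natural proof and is essentially what the cited paper does.

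Two small corrections. First, in case~(g) your bookkeeping slips: with $a=(x,1)$, $b=(y,1)$, $c=(z,1)$ one has $ab=(-y^{*}x,0)$ and then $(ab)c=\bigl(-z(y^{*}x),\,1\bigr)$, not layer $0$; likewise $a(bc)=\bigl(-x(y^{*}z),\,1\bigr)$. Both products are in $Q_{n-1}\times\{1\}$, and the rest of your argument goes through.

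Second, your treatment of the degenerate cases is not quite right. You claim that if one of $x,y,z$ equals $\pm 1$ then ``the stated formulas collapse to $1$, matching the fact that any associator involving a central element is $1$.'' But $(x,1)$ is \emph{not} central even when $x=\pm1$; for instance in~(g) with $x=\pm1$ the right-hand side reduces to $[y,z]$, not to $1$, and the left-hand side $[\pm e,(y,1),(z,1)]$ is likewise generally $-1$. The clean fix is not a case split but the observation that, since every conjugate satisfies $u^{*}=\pm u$ and $-1$ is central, one has $[u,v^{*}]=[u,v]$ and $[u,v^{*},w]=[u,v,w]$, etc. So you may carry $y^{*}$ (and the other stars) through the rewriting chain and only at the very end replace each commutator or associator containing a starred entry by the corresponding unstarred one. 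With that remark your argument needs no separate treatment of $\pm1$ and is complete.
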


\begin{lemma}\label{cor:xe}
Let $Q_n$ be a Cayley--Dickson loop, and let $x,y\in Q_n$ such that $x=(\bar{x},x_n)$, $y=(\bar{y},y_n)$, $\bar{x},\bar{y}\in Q_{n-1}$, $x_n, y_n \in \{0,1\}$. Then 
\begin{eqnarray}
\label{eqn:xyz-ze} L_{x,y}(z)&=&[\bar{x},\bar{y}]L_{x,y}(ze),\\	 
\nonumber L_{x,e}&=&L_{xe,e}.
\end{eqnarray}
\end{lemma}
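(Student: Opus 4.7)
The plan is to reduce both identities to statements about associator signs via Lemma~\ref{lemma:Tx}, which gives $L_{x,y}(w) = [y,x,w]\,w$ for every $w\in Q_n$.

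For the first equation, this interprets the claim as the assertion that
\[
[y,x,z] \;=\; [\bar{x},\bar{y}]\,[y,x,ze],
\]
i.e., the sign by which $L_{x,y}$ acts on $z$ differs from the sign by which it acts on $ze$ by exactly $[\bar{x},\bar{y}]$. I would split on $(x_n,y_n,z_n)\in\{0,1\}^3$, writing $z=(\bar z,z_n)$. A short computation with the multiplication rules shows that $ze$ has second coordinate $1-z_n$ (explicitly, if $z_n=0$ then $ze=(\bar z,1)$, and if $z_n=1$ then $ze=(-\bar z,0)$), so the two associators $[y,x,z]$ and $[y,x,ze]$ always fall under different clauses of Lemma~\ref{rmk:assoc}. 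In each of the (at most) eight cases I would expand both associators via Lemma~\ref{rmk:assoc} and simplify using the identity $[\bar u,\bar v,\bar w]=[\bar w,\bar v,\bar u]$ from Lemma~\ref{1} together with the fact that commutators in $Q_{n-1}$ square to $1$. The resulting product should collapse to $[\bar{x},\bar{y}]$ in every case.

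For $L_{x,e}=L_{xe,e}$, I would again invoke Lemma~\ref{lemma:Tx}: the mapping $L_{x,e}$ is the identity on $\pm\{1,x,e,xe\}$ and sends every other element to its negative, while $L_{xe,e}$ plays the same role with $\pm\{1,xe,e,(xe)e\}$. Because $\langle x,e\rangle$ is contained in a copy of $\mathbb{H}_8$ and is therefore associative, $(xe)e=xe^2=-x$, so the two fixed sets coincide and the two permutations agree on all of $Q_n$.

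The principal obstacle is the case analysis for the first identity. Each of the cases is routine bookkeeping with Lemma~\ref{rmk:assoc}, but the total volume is noticeable. A convenient shortcut is to observe that swapping $z$ with $ze$ exchanges the clause of Lemma~\ref{rmk:assoc} that applies in a predictable way (clauses (a)/(d), (b)/(f), (c)/(g), and the trivial case $z_n=0$ paired with (a) already treated), which pairs the cases and roughly halves the work.
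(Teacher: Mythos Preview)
Your approach is correct and is essentially the paper's own argument: the paper likewise reduces \eqref{eqn:xyz-ze} to the associator formulas of Lemma~\ref{rmk:assoc} together with $[u,v,w]=[w,v,u]$ from Lemma~\ref{1}, and proves $L_{x,e}=L_{xe,e}$ by comparing the explicit sign descriptions of the two maps (your observation $(xe)e=-x$ is exactly why the fixed sets $\pm\{1,x,e,xe\}$ and $\pm\{1,xe,e,(xe)e\}$ coincide).

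One small slip to fix: in your closing shortcut the pairing of clauses is off. Passing from $z$ to $ze$ flips only the \emph{last} coordinate, so the cases of Lemma~\ref{rmk:assoc} pair as $\text{(trivial)}\leftrightarrow\text{(a)}$, $\text{(b)}\leftrightarrow\text{(c)}$, $\text{(d)}\leftrightarrow\text{(e)}$, $\text{(f)}\leftrightarrow\text{(g)}$, not $(a)/(d)$, $(b)/(f)$, $(c)/(g)$. With the correct pairing each product of the two associator expressions collapses (via Lemma~\ref{1}) to $[\bar x,\bar y]$, exactly as you intend.
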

\begin{proof}
By~Lemma~\ref{1}, $[x,y,z]=[z,y,x]$ for any $x,y,z \in Q_{n-1}$. Thus \eqref{eqn:xyz-ze} follows from Lemma~\ref{rmk:assoc}.

%
If $x\in\pm \{1,e\}$, then $L_{x,e}=L_{xe,e}=id$. Otherwise,
\begin{equation*}
L_{x,e}(z)=[e,x,z]z=
\begin{cases}
z, &\mbox{if }z\in \pm \{1,x,e,xe\},\\
-z &\mbox{otherwise,}
\end{cases}
\end{equation*}
and
\begin{equation*}
L_{xe,e}(z)=[e,xe,z]z=
\begin{cases}
z, &\mbox{if }z\in \pm \{1,x,e,xe\},\\
-z &\mbox{otherwise,}
\end{cases}
\end{equation*}
thus $L_{x,e}=L_{xe,e}$.
\end{proof}
\begin{lemma}\label{lemma:r_prod}
Let $Q_n$ be a Cayley--Dickson loop, $n\geq 4$. Then an inner mapping on $Q_n$ 
\[
h=\prod_{x\in Q_{n-2}\slash\{1,-1\}}L_{x,i_{n-1}}
\]
can be written as the following permutation 
\[
h=\prod_{z\in (Q_{n}\slash \{1,-1\})\backslash (Q_{n-1}\slash \{1,-1\})}(z,-z).
\]
\end{lemma}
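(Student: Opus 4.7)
The plan is to evaluate $h$ pointwise. Using $L_{x,y}(z) = [y,x,z]\,z$ from equation~\eqref{gen3},
\[
h(z) = \epsilon(z)\,z, \qquad \epsilon(z) = \prod_{x\in Q_{n-2}/\{1,-1\}} [i_{n-1},x,z] \in \{1,-1\},
\]
so it suffices to show that $\epsilon(z) = 1$ whenever $z\in Q_{n-1}$ and $\epsilon(z) = -1$ whenever $z\in Q_n\setminus Q_{n-1}$.

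For $z\in Q_{n-1}$, the subloop $\langle i_{n-1},x,z\rangle$ lies in the Cayley--Dickson subloop $Q_{n-1}$, whose top generator is $i_{n-1}$. Lemma~\ref{lemma:comm-assoc} combined with Lemma~\ref{lemma:aut2} applied internally to $Q_{n-1}$ shows that $[i_{n-1},x,z] = -1$ precisely when $\langle x,z\rangle\cong\mathbb{H}_8$. The side-hypothesis $i_{n-1}\notin\langle x,z\rangle$ is automatic: $x\in Q_{n-2}$, and either $z\in Q_{n-2}$ or $z = w\,i_{n-1}$ with $w\in Q_{n-2}$; in the second case, diassociativity of $\langle w,i_{n-1}\rangle$ gives $\langle i_{n-1},x,z\rangle = \langle i_{n-1},x,w\rangle$, replacing $z$ by $w\in Q_{n-2}$. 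A direct count then shows that exactly $2^{n-2}-2$ classes $x\in Q_{n-2}/\{1,-1\}$ produce a quaternion subgroup $\langle x,\cdot\rangle$ when the third argument lies in $Q_{n-2}\setminus\{\pm 1\}$, and $0$ classes do so when it lies in $\pm\{1,i_{n-1}\}$. Since $n\ge 4$, $2^{n-2}-2$ is even, whence $\epsilon(z) = 1$.

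For $z\in Q_{n-1}e = Q_n\setminus Q_{n-1}$, Lemma~\ref{lemma:ik-assoc} with $k = n-1$ gives $[i_{n-1},x,z] = -1$ iff $z\in\pm\{e,i_{n-1}e,xe,x i_{n-1}e\}$ for $x\in Q_{n-2}\setminus\{\pm 1\}$; the class $x = \pm 1$ always contributes $1$. Writing $z = ve$ with $v\in Q_{n-1}$, I split on whether $v\in\pm\{1,i_{n-1}\}$ or not, using that $Q_{n-2}$ and $Q_{n-2}i_{n-1}$ partition $Q_{n-1}$. In the first subcase every $x\ne\pm 1$ contributes $-1$, giving $2^{n-2}-1$ contributing classes; in the second, the relation $\pm x = v$ or $\pm xi_{n-1} = v$ picks out exactly one class $x$, giving a count of $1$. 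Both counts are odd, so $\epsilon(z) = -1$.

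The main technical hurdle is the diassociativity reduction $\langle i_{n-1},x,z\rangle = \langle i_{n-1},x,w\rangle$ in the case $z\in Q_{n-1}\setminus Q_{n-2}$, together with recognising $\mathbb{O}_{16}$ inside $Q_{n-1}$ via Lemma~\ref{lemma:aut2}; once this structural step is done, the remainder of the argument is an elementary parity check that exploits $n\ge 4$.
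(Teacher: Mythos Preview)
Your proposal is correct and follows essentially the same approach as the paper: both write $h(z)=\bigl(\prod_x [i_{n-1},x,z]\bigr)z$, determine each associator via Lemmas~\ref{lemma:comm-assoc} and~\ref{lemma:aut2} (inside $Q_{n-1}$) for $z\in Q_{n-1}$ and via Lemma~\ref{lemma:ik-assoc} for $z\in Q_{n-1}e$, and then carry out the identical parity count $2^{n-2}-2$, $0$, $2^{n-2}-1$, $1$ over the four subcases. Your reduction $\langle i_{n-1},x,z\rangle=\langle i_{n-1},x,w\rangle$ when $z=wi_{n-1}$ is exactly the paper's treatment of the case $z\in Q_{n-2}i_{n-1}$; just note that the criterion ``$\langle x,z\rangle\cong\mathbb{H}_8$'' should read ``$\langle x,w\rangle\cong\mathbb{H}_8$'' after that reduction (e.g.\ $x=\pm w$ gives $\langle x,z\rangle\cong\mathbb{H}_8$ but associator $1$), though your subsequent count already uses the reduced argument and is correct.
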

\begin{proof}
Let $x\in Q_{n-2}\slash\{1,-1\}$. By~\eqref{gen2},
\[
L_{x,i_{n-1}}(z)=[i_{n-1},x,z]z.
\]
If $z\in \left\langle x,i_{n-1}\right\rangle=\pm \{1,x,i_{n-1},xi_{n-1}\}$, then $[i_{n-1},x,z]=1$. If $z\in Q_{n-1}\backslash\pm \{1,x,i_{n-1},xi_{n-1}\}$, then $[i_{n-1},x,z]=-1$ by Lemmas~\ref{lemma:aut2}~and~\ref{lemma:comm-assoc}. If $z \in \{e,xe,i_{n-1}e,xi_{n-1}e\}$, then
\[
\left\langle i_{n-1},x,z \right\rangle=\{1,x,i_{n-1},xi_{n-1},e,xe,i_{n-1}e,xi_{n-1}e\}\cong \mathbb{O}_{16}
\]
and $[i_{n-1},x,z]=-1$ by Lemmas~\ref{lemma:aut2}~and~\ref{lemma:comm-assoc}. If $z \in Q_{n-1}e\backslash\{e,xe,i_{n-1}e,xi_{n-1}e\}$, then $[i_{n-1},x,z]=1$ by Lemma~\ref{lemma:ik-assoc}. Summarizing, we have
\begin{equation*}
L_{x,i_{n-1}}(z)=
\begin{cases}
z, & \mbox{when } z \in \pm \{1,x,i_{n-1},xi_{n-1}\} \cup ( Q_{n-1}e\backslash\{e,xe,i_{n-1}e,xi_{n-1}e\}),\\
-z, & \mbox{otherwise}.
\end{cases}
\end{equation*}
Next, consider a mapping 
\[
h=\prod_{x\in Q_{n-2}\slash\{1,-1\}}L_{x,i_{n-1}}.
\]
If $z\in \pm\{1,i_{n-1}\}$, then clearly $h(z)=z$. If $z\in Q_{n-2}\backslash\pm\{1\}$, then $L_{x,i_{n-1}}(z)=-z$ for all $x\neq \pm z$, there is an even number (in fact, $2^{n-2}-2$) of such mappings, and therefore $h(z)=z$. If $z\in Q_{n-2}i_{n-1}\backslash\pm \{i_{n-1}\}$, then $z=yi_{n-1}$ for some $y\in Q_{n-2}$, and $L_{x,i_{n-1}}(z)=-z$ for all $x\neq \pm y$, there is $2^{n-2}-2$ such mappings, and therefore $h(z)=z$. We get $h(z)=z$ for $z\in Q_{n-1}$. Consider $z\in Q_{n-1}e$. If $z\in \pm\{e,i_{n-1}e\}$, then $L_{x,i_{n-1}}(z)=-z$ for all $x \neq 1$, there is $2^{n-2}-1$ such mappings, and thus $h(z)=-z$. Finally, if $z\in Q_{n-1}e\backslash\{e,i_{n-1}e\}$, then either $z=ye$, or $z=yi_{n-1}e$ for some $y\in Q_{n-2}$, and $L_{x,i_{n-1}}(z)=-z$ only when $x=\pm y$, again, $h(z)=-z$. We get $h(z)=-z$ for $z\in Q_{n-1}e$.
\end{proof}
\begin{theorem}\label{thm:rlinn}
Let $Q_n$ be a Cayley--Dickson loop. Then $Inn_l(Q_n)$ is an elementary abelian $2$-group of order $2^{2^{n-1}-1}$. 
\end{theorem}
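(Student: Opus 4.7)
The plan is to prove Theorem~\ref{thm:rlinn} in three stages: establish the elementary abelian structure, bound $|Inn_l(Q_n)|$ above by using Lemma~\ref{cor:xe} to reduce to the restriction to $Q_{n-1}$, and bound it below by realising every possible restriction together with each choice of a certain scalar. The argument will proceed by induction on $n$, with the nonassociative base case $n=3$ handled directly.

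\textbf{Elementary abelian structure.} By diassociativity (Lemma~\ref{lemma:comm-assoc}), each generator $L_{x,y}(z)=[y,x,z]z$ sends $z$ to $\pm z$, so $L_{x,y}$ is a product of disjoint transpositions $(z,-z)$ and has order at most $2$. Any two such sign-flip permutations commute, so $Inn_l(Q_n)$ is elementary abelian.

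\textbf{Upper bound.} For $f\in Inn_l(Q_n)$ write $f(z)=\epsilon_f(z)z$ with $\epsilon_f(z)\in\{\pm1\}$. Applying Lemma~\ref{cor:xe} to each generator gives $\epsilon_{L_{x,y}}(ze)=[\bar x,\bar y]\,\epsilon_{L_{x,y}}(z)$, and this extends multiplicatively to produce a scalar $c_f\in\{\pm1\}$ with $\epsilon_f(ze)=c_f\,\epsilon_f(z)$ for every $z$; thus $f$ is determined by the pair $(f|_{Q_{n-1}},c_f)$. I would then argue that $f|_{Q_{n-1}}\in Inn(Q_{n-1})$: it is a sign-flip fixing $1$, and by Theorem~\ref{thm:inn-invol} applied to $Q_{n-1}$ it remains only to show the number of flipped cosets in $Q_{n-1}/\{\pm1\}$ is even. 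Reduce to $f=L_{x,y}$ and split cases according to which of $Q_{n-1}$ and $Q_{n-1}e$ contain $x$ and $y$; Lemma~\ref{rmk:assoc} expresses $[y,x,z]$ (for $z\in Q_{n-1}$) as a product of commutators and $Q_{n-1}$-associators, so the mod-$2$ flip count decomposes into a sum of pieces, each of which is even---either a support of a middle inner mapping $T_x$ of size $2^{n-1}-2$ from Lemma~\ref{lemma:Tx}, or a support of an $L$-type inner mapping of $Q_{n-1}$ which is even by the inductive hypothesis (using Lemma~\ref{1} to identify the trickier associator factor in the all-$Q_{n-1}e$ case). Since $|Inn(Q_{n-1})|=2^{2^{n-1}-2}$ by Theorem~\ref{thm:inn-invol}, we obtain $|Inn_l(Q_n)|\le 2\cdot 2^{2^{n-1}-2}=2^{2^{n-1}-1}$.

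\textbf{Lower bound.} Show the restriction $\rho:Inn_l(Q_n)\to Inn(Q_{n-1})$ is surjective and both values of $c_f$ are realised. For $x,y\in Q_{n-1}$ we have $L_{x,y}^{Q_n}|_{Q_{n-1}}=L_{x,y}^{Q_{n-1}}$; for $x\in Q_{n-1}$, Lemma~\ref{rmk:assoc} yields $L_{x,e}^{Q_n}(z)=[e,x,z]z=[x,z]z=T_x(z)$ on $Q_{n-1}$. Theorem~\ref{thm:inn-gen} together with Corollary~\ref{cor:RLmaps} gives $Inn(Q_{n-1})=\langle T_x,L_{x,y}^{Q_{n-1}}\rangle$, so $\rho$ is surjective. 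The identity realises $c_f=1$; for $c_f=-1$ use the element $h$ of Lemma~\ref{lemma:r_prod} (which fixes $Q_{n-1}$ pointwise and flips every coset in $Q_{n-1}e$) when $n\ge 4$, or $L_{i_1,i_2}$ when $n=3$. Hence every pair in $Inn(Q_{n-1})\times\{\pm1\}$ is realised, giving $|Inn_l(Q_n)|=2|Inn(Q_{n-1})|=2^{2^{n-1}-1}$.

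\textbf{Expected obstacle.} The main difficulty is the evenness step in the upper bound: the case analysis over the four configurations of $x,y$ relative to the decomposition $Q_n=Q_{n-1}\cup Q_{n-1}e$ is tedious, and the subcase $x,y\in Q_{n-1}e$ is the delicate one, since a fourth associator factor $[x',z,y']$ must be identified via Lemma~\ref{1} with the support of a lower-dimensional inner mapping before the inductive hypothesis applies.
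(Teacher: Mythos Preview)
Your approach is genuinely different from the paper's. For the lower bound the paper exhibits an explicit basis: the $2^{n-1}-2$ maps $L_{x,e}L_{i_{n-1},e}$ from Lemma~\ref{lemma:Tx} together with $h$ from Lemma~\ref{lemma:r_prod}, and counts directly. For the upper bound the paper simply invokes Lemma~\ref{cor:xe} to say that the sign of $L_{x,y}$ on $ze$ is determined by its sign on $z$, and that $L_{x,y}(1)=1$, concluding $|Inn_l(Q_n)|\le 2^{2^{n-1}-1}$ without analysing where the restriction lands. Your structural picture---an exact sequence $1\to\{\mathrm{id},h\}\to Inn_l(Q_n)\xrightarrow{\rho} Inn(Q_{n-1})\to 1$---is more informative, and your surjectivity argument via $L_{x,e}|_{Q_{n-1}}=T_x$ is a clean alternative to the paper's explicit generators.

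There is, however, a gap in your upper-bound step. When you expand $[y,x,z]$ for $z\in Q_{n-1}$ via Lemma~\ref{rmk:assoc}, not every associator factor has $\bar z$ in an outer slot. Concretely, whenever $x\in Q_{n-1}e$ (cases (b) and (f) of Lemma~\ref{rmk:assoc}, so \emph{both} of the subcases $y\in Q_{n-1}$ and $y\in Q_{n-1}e$, not just the latter), a factor $[\bar x,\bar z,\bar y]$ with $\bar z$ in the \emph{middle} slot appears. Lemma~\ref{1} only gives $[\bar x,\bar z,\bar y]=[\bar y,\bar z,\bar x]$, still with $\bar z$ in the middle; it does not convert this into the sign of any $L_{a,b}$ or $R_{a,b}$ or $T_a$ acting on $\bar z$, so neither Lemma~\ref{lemma:mlt-even} nor your stated inductive hypothesis applies. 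What must actually be proved is the auxiliary parity statement
\[
\prod_{\bar z\in Q_{m}/\{\pm1\}}[\,a,\bar z,b\,]=1\qquad\text{for all }a,b\in Q_m,
\]
and this requires its own induction on $m$: apply Lemma~\ref{rmk:assoc} once more to split the product over $Q_{m-1}$ and $Q_{m-1}i_m$, and observe that the resulting pieces are commutator supports (even), constants raised to the power $2^{m-1}$, $L$-type supports (even by Lemma~\ref{lemma:mlt-even}), and one further middle-slot product in $Q_{m-1}$ handled by the inductive hypothesis. With that lemma in hand your argument closes.
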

\begin{proof}
Let $x\in Q_{n-1}\slash\{1,-1\}$, $x\neq 1$. Then by Lemma~\ref{lemma:Tx}
\[
L_{x,e}L_{i_{n-1},e}=(x,-x)(i_{n-1},-i_{n-1})(xe,-xe)(i_{n-1}e,-i_{n-1}e).
\]
For every $f\in Inn_l(Q_n)$, there is $\tilde{f}=L_{x,e}L_{i_{n-1},e}f\in Inn_l(Q_n)$ such that 
\begin{equation*}
\tilde{f}(z)=
\begin{cases}
-f(z), &\mbox{when }z\in \{x,i_{n-1},xe,i_{n-1}e\},\\
f(z), &\mbox{otherwise}.
\end{cases}
\end{equation*}
There are $2^{n-1}-2$ distinct inner mappings $L_{x,e}L_{i_{n-1},e},\ \ x\in Q_{n-1}\slash\{1,-1\},\ \ x\neq 1$, they generate a group of order $2^{2^{n-1}-2}$. 
Let  
\[
h=\prod_{y\in Q_{n-2}\slash\{1,-1\}}L_{y,i_{n-1}}=\prod_{z\in (Q_{n}\slash \{1,-1\})\backslash (Q_{n-1}\slash \{1,-1\})}(z,-z).
\]
be the mapping constructed in Lemma~\ref{lemma:r_prod}. For every $f\in Inn_l(Q_n)$, a mapping $\tilde{f}=hf$ satisfies 
\begin{equation*}
\tilde{f}(z)=
\begin{cases}
f(z), &\mbox{when }z\in Q_{n-1},\\
-f(z), &\mbox{otherwise}.
\end{cases}
\end{equation*}
The group 
\[
G=\left\langle L_{x,e}L_{i_{n-1},e}, h \left|\right. 1 \neq x\in Q_{n-1}\slash\{1,-1\}\right\rangle
\]
therefore has order $2^{2^{n-1}-1}$ and is a subgroup of $Inn_l(Q_n)$. \\
To show that $Inn_l(Q_n)=G$, recall that $L_{x,y}(z)=[\bar{x},\bar{y}]L_{x,y}(ze)$ for $\bar{x},\bar{y}\in Q_{n-1}$, by~\eqref{eqn:xyz-ze}. The value of $L_{x,y}(ze)$ is therefore uniquely determined by that of $L_{x,y}(z)$, moreover, $L_{x,y}(1)=1$, thus $Inn_l(Q_n)$ has order at most $2^{\frac{\left|Q_{n-1}\right|}{2}-1}=2^{2^{n-1}-1}$.
\end{proof}
\section{Left and Right Multiplication Groups}\label{sec:rlmlt}
Let $Q_n$ be a Cayley--Dickson loop. A group $Mlt_l(Q_n)$ is a proper subgroup of $Mlt(Q_n)$ by Theorems~\ref{thm:inn-invol}~and~\ref{thm:rlinn},
\[
Mlt_l(Q_n)_{1}=Inn_l(Q_n)<Inn(Q_n)=Mlt(Q_n)_{1}.
\]
We showed in Corollary~\ref{cor:CD_RLMlt} that $Mlt_l(Q_n)\cong Mlt_r(Q_n)$. The following theorem establishes the structure of $Mlt_l(Q_n)$.
\begin{theorem}\label{thm:rlsemidir}
Let $Q_n$ be a Cayley--Dickson loop, $n\geq 2$. Then $Mlt_l(Q_n)\cong (Inn_l(Q_n)\times Z(Q_n))\rtimes K$, where $K$ is the group constructed in Lemma~\ref{lemma:K}. In particular, $Mlt_l(Q_n)\cong ((\mathbb{Z}_{2})^{2^{n-1}-1}\times \mathbb{Z}_{2})\rtimes (\mathbb{Z}_{2})^{n}$.  
\end{theorem}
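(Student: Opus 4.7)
The plan is to mirror the proof of Theorem~\ref{thm:semidir}. Set $G = Mlt_l(Q_n)$, $N = Inn_l(Q_n) \times Z(Q_n)$, and let $K$ be the group constructed in Lemma~\ref{lemma:K}. I will verify $N \trianglelefteq G$, $K \leq G$, $N \cap K = \{\mathrm{id}\}$, and $[G:N] = |K|$; once these are in hand, $G = N \rtimes K$ follows, and the explicit form $((\mathbb{Z}_2)^{2^{n-1}-1} \times \mathbb{Z}_2) \rtimes (\mathbb{Z}_2)^n$ is read off from Theorem~\ref{thm:rlinn} and Lemma~\ref{lemma:K}.

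The routine ingredients transfer almost verbatim from Theorem~\ref{thm:semidir}. For normality, write $g \in G$ as $\beta L_x$ with $\beta \in Inn_l(Q_n)$ and $x \in Q_n$, legitimate because $L_{Q_n}$ is a left transversal to $Inn_l(Q_n)$ in $Mlt_l(Q_n)$ by Lemma~\ref{lemma:mltsize}. Then $(g\alpha g^{-1})(1) = \beta(L_x \alpha L_x^{-1}(1)) = \pm 1$ for $\alpha \in N$, and since $g\alpha g^{-1} \in Mlt_l(Q_n)$ and fixes $\{1,-1\}$ setwise, it lies in $Inn_l(Q_n) \cup (-Inn_l(Q_n)) = N$. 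The trivial intersection $N \cap K = \{\mathrm{id}\}$ is immediate from $N \subseteq Inn(Q_n) \times Z(Q_n)$ and the analogous intersection in Theorem~\ref{thm:semidir}. The index count $[G:N] = |G|/|N| = (2^{n+1}\cdot 2^{2^{n-1}-1})/(2^{2^{n-1}-1}\cdot 2) = 2^n = |K|$ is direct from Lemma~\ref{lemma:mltsize} and Theorem~\ref{thm:rlinn}.

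The substantive step is $K \leq G$, that each generator $g_{k,n} = \bigl(\prod_{x \in s_{k,n}} T_x\bigr) L_{i_k}$ factors as a product of left translations. The key identity, immediate from the sign descriptions in Lemma~\ref{lemma:Tx}, is $T_x T_y T_{xy} = L_{x,y}$ whenever $\langle x, y \rangle \cong \mathbb{H}_8$; rearranging gives $T_{i_n y} = T_{i_n} T_y L_{i_n, y}$ for $y \in Q_{n-1}\setminus\{\pm 1\}$. For $k < n$, the recursion $s_{k,n} = s_{k,n-1} \sqcup i_n s_{k,n-1}$ combined with commutativity of $Inn(Q_n)$ (Theorem~\ref{thm:inn-invol}) lets the $T_y$-factors pair and cancel and the $T_{i_n}$-power collapse to the identity, leaving $\prod_{x \in s_{k,n}\setminus\{1\}} T_x = \prod_{y \in s_{k,n-1}\setminus\{1\}} L_{i_n, y}$, an element of $Inn_l(Q_n)$. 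For $k = n$, the corresponding splitting $s_{n,n}\setminus\{1\} = (s_{n-1,n-1}\setminus\{1\}) \sqcup i_n(P_{n-1}\setminus s_{n-1,n-1})$, with $P_{n-1} = Q_{n-1}/\{1,-1\}$, yields $\prod T_x = \prod_{z \in P_{n-1}\setminus\{1\}} T_z \cdot \prod_{y \in P_{n-1}\setminus s_{n-1,n-1}} L_{i_n, y}$; a sign calculation identifies the first factor with the mapping $h$ of Lemma~\ref{lemma:r_prod} (it fixes $Q_{n-1}$ and flips $Q_n\setminus Q_{n-1}$), so again the full product lies in $Inn_l(Q_n)$. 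In every case $g_{k,n} \in Inn_l(Q_n)\cdot L_{i_k} \subseteq Mlt_l(Q_n)$.

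The main obstacle is the combinatorial bookkeeping in the previous paragraph: tracking the parity of the $T_{i_n}$-exponent after commuting all $T$'s past one another (it is $2^{n-2}-1$ odd factors from the $y\neq 1$ terms plus a stray $T_{i_n}$ from $y=1$, which together vanish), and in the $k=n$ case correctly recognizing the surviving full-$P_{n-1}$ product of $T$'s as Lemma~\ref{lemma:r_prod}'s $h$ rather than some other element of $Inn_l(Q_n)$.
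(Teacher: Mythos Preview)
Your overall architecture is exactly the paper's: establish $N\trianglelefteq G$, $K\leq G$, $N\cap K=\{\mathrm{id}\}$, and match indices. The normality, intersection, and index arguments are fine and essentially identical to the paper's. The only nontrivial part---rewriting each $g_{k,n}$ as (element of $Inn_l(Q_n)$)$\cdot L_{i_k}$---is also what the paper does, and your recursive bookkeeping (pairing $T_y$'s, counting $T_{i_n}$'s, and recognising $\prod_{z\in P_{n-1}\setminus\{1\}}T_z$ as the map $h$ of Lemma~\ref{lemma:r_prod}) is correct.

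There is, however, one genuine slip. The identity you state, ``$T_xT_yT_{xy}=L_{x,y}$ whenever $\langle x,y\rangle\cong\mathbb{H}_8$,'' is \emph{false} in general. From Lemma~\ref{lemma:Tx} one has $T_xT_yT_{xy}(z)=[x,z][y,z][xy,z]\,z$ and $L_{x,y}(z)=[y,x,z]\,z$; these agree only when $\langle x,y,z\rangle\leq\mathbb{O}_{16}$ for every $z$. By Lemma~\ref{lemma:ik-assoc} there exist $x,y$ with $\langle x,y\rangle\cong\mathbb{H}_8$ and $z$ with $\langle x,y,z\rangle\cong\tilde{\mathbb{O}}_{16}$, and then $[y,x,z]=1$ while $[x,z][y,z][xy,z]=-1$. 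What rescues your argument is that you only invoke the identity with one argument equal to $i_n=e$, and in that case Lemma~\ref{lemma:aut2} forces $\langle e,y,z\rangle\leq\mathbb{O}_{16}$ for all $z$, so the identity does hold. The paper states and uses precisely this special case, $T_xT_{xe}T_e=L_{x,e}$. You should restrict your claim accordingly and cite Lemma~\ref{lemma:aut2} as the reason it holds; once that is corrected, your proof is complete and matches the paper's.
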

\begin{proof}
Since $Z(Q_n)\leq Mlt_l(Q_n)$, let $N=\left\langle Inn_l(Q_n),Z(Q_n)\right\rangle=Inn_l(Q_n)Z(Q_n)$. A group $Inn_l(Q_n)$ stabilizes~$1$, therefore $Inn_l(Q_n)\cap Z(Q_n)=1$. The index $[N:Inn_l(Q_n)]=2$, therefore $Inn_l(Q_n)\trianglelefteq N$, and $Z(Q_n)\trianglelefteq Mlt_l(Q_n)$ implies $Z(Q_n)\trianglelefteq N$. It follows that $N=Inn_l(Q_n)\times Z(Q_n)$. 
Let $G=Mlt_l(Q_n)$ and $K$ be the group constructed in Lemma~\ref{lemma:K}. We want to show that $G=N\rtimes K$.
\begin{enumerate}
	\item Let $\alpha \in N, g\in G$. There exist $x\in Q_n, \beta \in Inn_l(Q_n)$ such that $g=\beta L_{x}$. Consider $g\alpha g^{-1}$ acting on~$1$,
	\begin{eqnarray}
	\nonumber g\alpha g^{-1}(1)&=&\beta L_{x}\alpha (\beta L_{x})^{-1}(1)=\beta L_{x}\alpha L_{x}^{-1} \underbrace{\beta^{-1}(1)}_{1}\\
	\nonumber &=&\beta \underbrace{L_{x} \alpha L^{-1}_{x} (1)}_{\pm 1}=\pm \beta(1)=\pm 1.
	\end{eqnarray}
	This shows that $g\alpha g^{-1}\in Inn_l(Q_n)\cup (-Inn_l(Q_n))=N $, so $N$ is normal in $G$.\\
	Recall a mapping $h$ constructed in Lemma~\ref{lemma:r_prod},
	\[
	h=\prod_{z\in (Q_{n}\slash \{1,-1\})\backslash (Q_{n-1}\slash \{1,-1\})}(z,-z).
	\]
	Note that by Lemma~\ref{lemma:Tx} we have
	\[
	T_xT_{xe}T_e=\prod_{1,e,x,xe \neq z \in Q_{n}\slash \{1,-1\}}(z,-z)=L_{x,e},
	\]
	which allows to rewrite the construction in Lemma~\ref{lemma:K} as follows 
	\begin{eqnarray}
\nonumber s_{1,2}&=&\{1,i_{2}\},\ \ s_{2,2}=\{1,i_{1}i_{2}\},\\
\nonumber s_{k,n}&=&\{x,i_{n}x\left|\right.x\in s_{k,n-1}\},\ \ k\in\{1,\ldots,n-1\},\\
\nonumber s_{n,n}&=&\left\{\prod_{j=1}^{n}i_{j}^{p_{j}}\left|\right.p_{j}\in\{0,1\},\sum_{j=1}^{n}p_{j}\in 2\mathbb{Z}\right\},\\
\nonumber \bar{s}_{n,n}&=&\left\{\prod_{j=1}^{n}i_{j}^{p_{j}}\left|\right.p_{j}\in\{0,1\},\sum_{j=1}^{n}p_{j}\notin 2\mathbb{Z}\right\},\\
\nonumber 	g_{k,n}&=&(\prod_{x\in s_{k,n}} T_x)L_{i_{k}}= (\prod_{x\in s_{k,n-1}} T_xT_{xe}) T_eL_{i_{k}}\\
\nonumber				 &=&(\prod_{x\in s_{k,n-1}} T_xT_{xe}) (\prod_{x\in \{1,\ldots, 2^{n-2}-1\}}T_e)L_{i_{k}}\\
\nonumber				 &=&(\prod_{x\in s_{k,n-1}} T_xT_{xe}T_e)L_{i_{k}}\\
\nonumber				 &=&(\prod_{x\in s_{k,n-1}} L_{x,e})L_{i_{k}},\ \ k\in\{1,\ldots,n-1\},
\end{eqnarray}
\begin{eqnarray}
\nonumber 	g_{n,n}&=&(\prod_{x\in s_{n,n}} T_x)L_{i_{k}}=(\prod_{x\in s_{n-1,n-1}} T_x \prod_{x\in \bar{s}_{n-1,n-1}} T_{xe})L_{i_{k}}\\
\nonumber		 		&=&(\prod_{x\in \bar{s}_{n,n}} (x,-x))L_{i_{k}}\\
\nonumber		 		&=&(\prod_{x\in \bar{s}_{n-1,n-1}} (x,-x))(\prod_{x\in s_{n-1,n-1}} (xe,-xe))L_{i_{k}}\\
\nonumber		 		&=&(\prod_{x\in \bar{s}_{n-1,n-1}} (x,-x)(xe,-xe))(\prod_{x\in Q_{n-1}e} (x,-x))L_{i_{k}}\\
\nonumber				&=& (\prod_{x\in s_{n-1,n-1}} L_{x,e})hL_{i_{k}},\\
\nonumber 	K&=&K_{n}=\left\langle g_{1,n},g_{2,n},\ldots,g_{n,n}\right\rangle.
\end{eqnarray}
Thus $K\leq Mlt_l(Q_n)$. 
	\item By \eqref{eqn:k-product}, \eqref{eqn:k-size}, $K$ contains a unique element $g$ such that $g(1)\in \{1,-1\}$. Since $K$ is a group, $g=id$, thus $N \cap K=id$.
	\item We established that $N\unlhd G, K\leq G$, and $N\cap K=id$, therefore $N\rtimes K \leq G$. Recall that 
	\begin{eqnarray}
	\nonumber \left[Mlt_l(Q_n):Inn_l(Q_n)\right] &=& \left|Q_n\right|\mbox{, thus} \\
	\nonumber \left[Mlt_l(Q_n):(Inn_l(Q_n)\times Z(Q_n))\right]&=&\left[Mlt_l(Q_n):Inn_l(Q_n)\right]\slash 2=2^n=\left|K\right|,
	\end{eqnarray}
	and	$(Inn_l(Q_n)\times Z(Q_n))\rtimes K \cong Mlt_l(Q_n)$ follows.
	\qedhere
\end{enumerate}
\end{proof}
\paragraph{Acknowledgement} The author thanks Petr~Vojt\v{e}chovsk\'{y} for many productive discussions and for suggesting better proofs for Lemmas~\ref{lemma:mlt-even},~\ref{lemma:inn-cycles}, and Theorem~\ref{thm:inn-invol}. 

\newpage
\bibliography{cdlbib}
\bibliographystyle{plain} 
\end{document}